\documentclass[12pt]{article}
\usepackage[a4paper,text={170mm,250mm},centering]{geometry}
\usepackage[affil-it]{authblk}
\usepackage{amsmath,amsthm,amssymb}
\usepackage{graphicx}
\usepackage{enumitem}
\usepackage{indentfirst}
\usepackage[title,titletoc]{appendix}
\usepackage{textgreek}
\usepackage{hyperref}
\usepackage[symbols,nogroupskip,sort=none]{glossaries-extra}


\linespread{1.2}

\newtheorem{theorem}{Theorem}[section]
\newtheorem{corollary}[theorem]{Corollary}
\newtheorem{lemma}[theorem]{Lemma}
\newtheorem{proposition}[theorem]{Proposition}
\theoremstyle{definition}
\newtheorem{definition}{Definition}[section]
\newtheorem{assumption}{Assumption}
\theoremstyle{remark}
\newtheorem{remark}{Remark}[section]

\numberwithin{equation}{section}

\sloppy\allowdisplaybreaks[4]

\def\one{\mathbf 1}
\def\bE{\mathbf E}

\def\bP{\mathbf P}


\def\R{\mathbb R}


\def\FF{\mathcal F}

\def\LL{\mathcal L}
\def\MM{\mathcal M}

\def\PP{\mathcal P}

\def\WW{\mathcal W}

\def\xa{\alpha}
\def\xb{\beta}
\def\xg{\gamma}

\def\xd{\delta}
\def\xD{\Delta}
\def\xe{\varepsilon}
\def\xf{\varphi}
\def\xk{\kappa}
\def\xl{\lambda}

\def\xo{\omega}
\def\xO{\Omega}

\def\xs{\sigma}
\def\xS{\Sigma}
\def\xt{\tau}

\def\as{\mathrm{a.s.}}

\def\d{\mathop{}\!\mathrm{d}}

\def\e{\mathrm e}

\def\st{\mathrm{st}}


\def\str#1{#1^{\ast}}

\def\abs#1{\left\lvert #1 \right\rvert}
\def\norm#1{\left\lVert #1 \right\rVert}

\def\dif#1#2{\frac{\d #1}{\d #2}}

\def\p{\partial}

\def\8{\infty}

\def\xht{\hat x}
\def\muht{\hat{\mu}}

\def\nutd{\tilde{\nu}}

\def\up{\uparrow}
\def\down{\downarrow}
\def\goto{\xrightarrow}


\hypersetup{hidelinks,colorlinks=true,linkcolor=red,citecolor=blue}

\renewcommand{\glossarysection}[2][]{}
\newglossarystyle{mylong}{%
     {\begin{longtable}[l]{@{}p{\dimexpr 0.1\textwidth -\tabcolsep}p{\dimexpr 0.9\textwidth -\tabcolsep}@{}}}%
     {\end{longtable}}%
  \renewcommand*{\glsgroupheading}[1]{}%
  \ifglsnogroupskip
  \else
  \fi
}
\glsdisablehyper
\loadglsentries{glossary.tex}


\begin{document}

\title{Global Dynamics of Granular Media Equations via Stochastic Order}
\author[1,2]{Baoyou Qu\thanks{qubaoyou@sdu.edu.cn}}
\author[2]{Jinxiang Yao\thanks{jxyao@mail.ustc.edu.cn}}
\author[2]{Yanpeng Zhi\thanks{yanpeng.zhi@durham.ac.uk}}

\affil[1]{{\small Research Center for Mathematics and Interdisciplinary Sciences, Shandong University, Qingdao, 266237, P.R. China}}
\affil[2]{{\small Department of Mathematical Sciences, Durham University, DH1 3LE, U.K.}}

\date{}

\maketitle

\begin{abstract}
This paper studies the rich dynamics of one-dimensional granular media equations with attractive quadratic interactions.
Building on the monotone dynamical systems framework developed in an earlier work, we allow for multiplicative noise, in contrast to most existing results restricted to additive noise.
Within this framework, we show that, in the one-dimensional setting, invariant measures are totally ordered with respect to the stochastic order.
The basins of attraction of the minimal and maximal invariant measures contain unbounded open sets in the $2$-Wasserstein space,  which is vacant in previous research even for additive noises.
Also, our main results address the global convergence to the order interval enclosed by the minimal and maximal invariant measures, and an alternating arrangement of invariant measures in terms of stability (locally attracting) and instability (as the backward limit of a connecting orbit).
Our theorems cover a wide range of classical granular media equations, such as double-well and multi-well landscapes.
Specific values for the parameter ranges, explicit descriptions of attracting sets and phase diagrams are provided.

\medskip

\noindent
{\bf MSC2020 subject classifications:} Primary 60H10, 60B10; secondary 37C65, 60E15

\noindent
{\bf Keywords:} Granular media equations, long-time behavior, invariant measures, basins of attraction, order-preserving semiflow, stochastic order  

\end{abstract}

{
\hypersetup{linkcolor=black}
\tableofcontents
}

\section{Introduction}
The category of McKean-Vlasov SDEs refers to a class of SDEs whose coefficients depend on laws of solutions.
McKean \cite{McKean1966} introduced it as a generalization of It\^o's diffusion processes by considering a nonlinear version of Fokker-Planck equations.
Among McKean–Vlasov SDEs, one of the most important and widely studied classes with concrete applications is given by granular media models:
\begin{equation}\label{eq:mvsystem}
\d X_t=-V'(X_t)\d t-(W'*\LL(X_t))(X_t)\d t+\xs(X_t)\d B_t,
\end{equation}
where $\{B_t\}_{t\in\R}$ is a standard two-sided Brownian motion on $\R$, $\LL(X_t)$ is the law of the random variable $X_t$, $V:\R\to\R$ is a confining potential function, $W:\R\to\R$ is an interaction potential, $\xs:\R\to\R$ is a diffusion term and
\begin{equation*}
    (W'*\mu)(x):=\int_{\R}W'(x-y)\d\mu(y), \ \text{ for all probability measure $\mu$ on $\R$.}
\end{equation*}
\noindent The law $\rho_t=\mathcal L(X_t)$ of the solution to \eqref{eq:mvsystem} formally solves the associated nonlinear Fokker-Planck equation 
\begin{equation}
\label{eq:mvpde}
\p_t\rho_t=\frac12\p_{xx}\big(\xs^2\rho_t\big)+\p_x\big[(V'+W'*\rho_t)\rho_t\big],
\end{equation}
which describes the evolution of the particle density.
From another point of view, The equation \eqref{eq:mvsystem} can be deduced as the mean-field limit (or hydrodynamic limit) of the following interacting particle system,
\begin{equation}
\label{eq:mvparticle}
\d X_t^{i,N}=-V'\big(X_t^{i,N}\big)\d t-\frac{1}{N}\sum_{j=1}^N W'\big(X_t^{i,N}-X_t^{j,N}\big)\d t+\xs\big(X_t^{i,N}\big)\d B_t^i,\quad i=1,2,\dots,N,
\end{equation}
where $\{B_t^i\}_{i=1}^{\8}$ is a sequence of independent Brownian motions.
As the particle number $N\to\8$, a single particle $X_t^{i,N}$ or the empirical measure $\frac{1}{N}\sum_{i=1}^N \xd_{X_t^{i,N}}$ converges to the continuum process $X_t$ in some sense.
This is called ``propagation of chaos'', and we refer to \cite{Chaintron-Diez2022i,Chaintron-Diez2022ii,McKean1967,Sznitman1991} on this topic.
Due to their origin as infinite-particle limits, equations \eqref{eq:mvsystem}-\eqref{eq:mvpde} provide natural mathematical models for large-scale phenomena.
They are widely applied in various fields, such as 
battery models \cite{Guhlke-Gajewski-Maurelli-Friz-Dreyer2018},  optimization \cite{Carrillo-Choi-Totzeck-Tse2018} and machine learning \cite{Mei-Montanari-Nguyen2018}.

In this paper, we focus on the typical attractive quadratic interaction potential, namely
\[
W(x)=\frac{\theta}{2}x^2,\quad\theta>0,
\]
which arises naturally in a variety of application contexts, including muscle contraction \cite{Kometani75}, chemical kinetics \cite{Horsthemke77}, statistical physics \cite{Haken77}, and large economic systems \cite{Aoki1980}.
A salient feature of this class of equations is the occurrence of multiple invariant measures if the noise is not too strong.
For the double-well landscapes with additive noise, Dawson \cite{Dawson1983} and Tugaut \cite{Tugaut2014} obtain phase transitions in the number of invariant measures. 
For the multi-well case, we refer to Alecio \cite{Alecio2023} and the references therein. 
Roughly speaking, this phenomenon can be understood through the geometry of the confining potential. 
It is common to consider well-shaped confining potentials $V$ in granular media equations, which means $V(x)\to\infty$ as $x\to\pm\infty$. 
If its derivative $V'$ only has simple zeros, then basically $V'$ has odd $(2n-1)$ zeros. 
In such a setting, the equation has exactly $(2n-1)$ invariant measures when noise is small enough, and has a unique invariant measure when noise is large.

Consequently, the study of the dynamics of \eqref{eq:mvsystem} is often divided into two regimes: the case of a unique invariant measure and the case of multiple invariant measures. In the former, convergence to the unique invariant measure has been extensively studied. 
For instance, Carrillo-McCann-Villani \cite{Carrillo-McCann-Villani2003} employed generalized logarithmic Sobolev inequalities and mass transportation inequalities, while Cattiaux-Guillin-Malrieu \cite{Cattiaux-Guillin-Malrieu2008} used a uniform-in-time propagation of chaos method to demonstrate convergence to the unique invariant measure. For more on this topic, see \cite{Carrillo-McCann-Villani2006,Durmus-Eberle-Guillin-Zimmer2020,Guillin2022,Wang2023} and the references therein.

In the presence of multiple invariant measures, a complete characterization of the basins of attraction is in general a highly challenging problem.
Current results in this direction are mainly  restricted to  establishing the local convergence around certain stable invariant measures, showing that solutions starting from initial distributions sufficiently close to them converge to the corresponding invariant measures. 
In Tugaut \cite{Tugaut2025}, local convergence is established under the assumptions that the initial condition admits a finite higher-order moment, a $C^{\infty}$-smooth density, and finite entropy.
Monmarché-Reygner \cite{Monmarche-Reygner2025} applied a local version of the log-Sobolev inequality to obtain local convergence within a sharp range of the noise intensity $\sigma$ for double-well landscapes.
Zhang \cite{Zhang2025} achieved local convergence by linearizing the nonlinear Markov semigroup, applicable to double-well landscapes.
For early works using the free energy function, see Tamura \cite{Tamura1984,Tamura1987}, while approaches relying on Lions derivatives can be found in Cormier \cite{Cormier2024}.
From a global perspective, Tugaut \cite{Tugaut2013} showed that, for double-well granular media equations with qualitatively sufficiently small noise, solutions starting from initial conditions with finite higher-order moment, a $C^{\infty}$-smooth density, and finite entropy will converge to one of the three invariant measures.

In \cite{Liu-Qu-Yao-Zhi2024}, a monotone dynamical systems framework for cooperative McKean–Vlasov SDEs was developed.
More precisely, under a local dissipativity assumption, the existence of multiple order related invariant measures in the Wasserstein space was established, together with monotone connecting (heteroclinic) orbits between them with respect to the stochastic order.

Building on the main result of \cite{Liu-Qu-Yao-Zhi2024}, the present paper goes further in the one-dimensional setting.
By analyzing the local dissipativity condition and the number of invariant measures, we identify connecting orbits for a broader class of one-dimensional granular media equations with multiplicative noise,  including, in particular, polynomial-type confining potentials of even degree.
Relying on the existence of such connecting orbits together with a structural theorem for local attractors of order-preserving semigroups, we prove local convergence in the $2$-Wasserstein space.
Furthermore, by combining a structural result on global attractors with an analysis of the stochastic order, we show that the basins of attraction of the minimal and maximal invariant measures contain unbounded open sets (see our main results Theorem \ref{thm:global-convergence}-Corollary \ref{coro:segment-convergence}).
To illustrate the applicability of our abstract results, we present below a theorem for the case where $V'$ has finitely many simple zeros.
\begin{theorem}\label{T:general V}
    Suppose Assumption \ref{asp:potential} holds, and $V'$  has finite zeros and they are all simple. Then there exist $\theta_0>0, \ \sigma_0>0$  such that for all $\theta\geq \theta_0$ and $\overline{\sigma}\leq \xs_0$,
    \begin{enumerate}[label=\textnormal{(\roman*)}]
\item \textnormal{(Total Orderedness)} there are exactly $(2n-1)$ invariant measures $\{\nu_k\}_{k=1}^{2n-1}$ with $\nu_1<_{\st}\nu_2<_{\st}\cdots<_{\st}\nu_{2n-1}$ for some integer $n\geq 1$;
\item \textnormal{(Connecting Orbits)} for all $k=1,2,\dots,n-1$, there are a decreasing connecting orbit $\{\nu^{k,\downarrow}_t\}_{t\in \R}$ from $\nu_{2k}$ to $\nu_{2k-1}$ and an increasing connecting orbit $\{\nu^{k,\uparrow}_t\}_{t\in \R}$ from $\nu_{2k}$ to $\nu_{2k+1}$;
\item \textnormal{(Locally Attracting)} there exist $r>0$ and $a_1<\cdots<a_k$ such that for all $1\leq k\leq n$, $\nu_{2k-1}\in B(\xd_{a_k},r)$  and
\[
\WW_2(P_t^*B(\xd_{a_k},r),\nu_{2k-1})=\sup_{\mu\in B(\xd_{a_k},r)}\WW_2(P_t^*\mu,\nu_{2k-1})\to0\text{ as }t\to\8;
\]
\item \textnormal{(Attracting Unbounded Open Neighbourhoods)}
if $\mu\leq_{\st}\nu$ for some $\nu\in B(\xd_{a_1},r)$, then
\[
\WW_2(P_t^*\mu,\nu_1)\to0\text{ as }t\to\8.
\]
If $\mu\geq_{\st}\nu$ for some $\nu\in B(\xd_{a_n},r)$, then
\[
\WW_2(P_t^*\mu,\nu_{2n-1})\to0\text{ as }t\to\8.
\]
In particular,
\[
\WW_2(P_t^*\mu,\nu_1)\to0\text{ as }t\to\8\ \text{ for all $\mu\in\bigcup_{a\leq0}B(\xd_{a_1+a},r)$},
\]
\[
\WW_2(P_t^*\mu,\nu_{2n-1})\to0\text{ as }t\to\8\ \text{ for all $\mu\in\bigcup_{a\geq0}B(\xd_{a_n+a},r)$}.
\]
\end{enumerate}
\end{theorem}
Our theorem clearly includes confining potentials of polynomial type, in particular those with
\[
V'(x)=a\prod_{k=1}^{2n-1}(x-\alpha_k),
\qquad a>0,\ \alpha_k\in\mathbb R\ \text{and}\ \alpha_i\neq\alpha_j\ (i\neq j).
\]
We stress that our approach is expected to extend to more general interaction potentials, for instance those satisfying $W''> 0$, as well as to higher-dimensional systems, including a multi-species population models; see \cite{Duong-Pavliotis-Tugaut2025,DuongHongTugaut20}.

Our approach begins with  the order-preserving property of the law-evolution semigroup generated by equation \eqref{eq:mvsystem} on the $2$-Wasserstein space $\PP_2(\R)$ (see e.g., \cite{Huang-Liu-Wang2018,Liu-Qu-Yao-Zhi2024,panpanren22}).
More precisely, we consider the stochastic order $\leq_{\mathrm{st}}$ defined by
\[ 
\mu\leq_{\st}\nu\text{ if and only if }\int_{\R}f\d\mu\leq\int_{\R}f\d\nu\ \ \text{for all bounded increasing functions $f\colon\R\to\R$}.
\]
Under a hyper-dissipativity assumption, we develop a dynamical result describing the global attractor structure of order-preserving semigroups (Proposition \ref{T:attractor}), based on a characterization of order-boundedness (Proposition \ref{prop:order-bounded-equivalent}) and the coming down from infinity property (Proposition  \ref{thm:mvsde-attractor}).
This yields a partial description of basins of attraction in the sense of the stochastic order (Theorem \ref{thm:global-convergence}, Corollary \ref{coro:unique-convergence}).
Building upon this global attractor analysis, and under the additional assumptions on local dissipativity and zero-crossing number, we further derive local attractor structures by tracing solutions backward along connecting orbits (Section \ref{subsec:local attractor}). 
As a consequence, we obtain local convergence in the topological sense, as well as the existence of unbounded open subsets contained in the corresponding basins of attraction (Theorem \ref{thm:local-convergence}, Corollary \ref{coro:segment-convergence}).  
It is worth noting that this phenomenon is far from obvious.
Indeed, any order interval has empty interior in $\PP_2(\R)$ (see Proposition \ref{prop:no-strong-order}), and resembles a `thin slice'' as shown in Figure \ref{fig:double-well}, \ref{fig:multi-well}. 
So attracting open sets by the method of preserving such an unsatisfactory partial order relation is counter-intuitive.

Finally, our results suggest the presence of a saddle-point structure in the double-well granular media equation.
This leads us to formulate, at the end of the paper, a conjecture on the existence of an unordered invariant separatrix containing the unstable invariant measure, which separates the basins of attraction of another two stable invariant measures.

This paper is organized as follows.
In Section \ref{sec:main-results}, we state the main results beyond Theorem \ref{T:general V}.
Section \ref{sec:stochastic-order} is about properties of the stochastic order, including an equivalent characterization of order boundedness in $\PP_2(\R)$ and a generalization of the lattice property.
Section \ref{sec:global-convergence} is devoted to the proof of Theorem \ref{thm:global-convergence} and Corollary \ref{coro:unique-convergence}.
Section \ref{sec:local-convergence} serves the proof of the remaining theorems and the formulation of a saddle-point conjecture in the double-well granular media equation.
In Appendix \ref{sec:no strong order}, we show that, in $\PP_2(\R)$, any probability measure has no order relation with any open set, which denies the existence of strong ordered pairs for the stochastic order.

\subsection*{Notations}

\printunsrtglossary[type=symbols,style=mylong]

\subsection{Assumptions and Main Results}
\label{sec:main-results}

In this section, we state our assumptions, some necessary definitions and main results.

\begin{assumption}\label{asp:potential}
(i) The function $V\in C^2(\R;\R)$ such that the continuous function $V':\R\to\R$ is one-sided Lipschitz, i.e., there exists $L>0$ such that, for all $x,y\in\R$,
\[
-(x-y)(V'(x)-V'(y))\leq L\abs{x-y}^2.
\]
(ii) The function $V':\R\to\R$ has polynomial growth, i.e., there exist $L>0$, $\xk\geq 1$ such that, for all $x\in\R$,
\[
\abs{V'(x)}\leq L(1+|x|^{\xk}).
\]
(iii) The function $V':\R\to\R$ is hyper-dissipative, i.e., there exist $\xa>0$, $\xb>0$, $\xd>0$ such that, for all $x,y\in\R$,
\[
-(x-y)(V'(x)-V'(y))\leq-\xa\abs{x-y}^{2+\xd}+\xb.
\]
(iv) $W(x)=\frac{\theta}{2}x^2$ with $\theta>0$.\\
(v) The function $\xs:\R\to\R$ is Lipschitz continuous and non-degenerate, i.e., there exist $0<\underline{\xs}<\overline{\xs}$, such that, for all $x\in\R$,
\[
\underline{\xs}^2\leq\xs^2(x)\leq\overline{\xs}^2.
\]
\end{assumption}

Next, we emphasize the following definitions that are aforementioned.

\begin{definition}\label{def:stochastic-order}
The stochastic order ``$\leq_{\st}$'' on $\PP_2(\R)$ is a partial order, defined by
\[
\mu\leq_{\st}\nu\text{ if and only if }\int_{\R}f\d\mu\leq\int_{\R}f\d\nu\text{ for all bounded increasing functions $f\colon\R\to\R$}.
\]
\end{definition}

\begin{definition}\label{def:connecting-orbits-measure}
For a semigroup $\str P_t$ on $\PP_2(\R)$,
\begin{enumerate}[label=(\roman*)]
\item a measure $\nu\in\PP_2(\R)$ is called an invariant measure if $P_t^*\nu=\nu$ for all $t\geq0$;
\item a family $\{\nu_t\}_{t\in\mathbb{R}}\subset\PP_2(\R)$ is called a connecting orbit from an invariant measure $\nu$ to another invariant measure $\tilde{\nu}$ if
\begin{align*}
&\str P_t\nu_s=\nu_{t+s}\text{ for any $t\geq0$, $s\in\mathbb{R}$};\\
&\WW_2(\nu_t,\nu)\to0\text{ as $t\to-\8$ and }\WW_2(\nu_t,\tilde{\nu})\to0\text{ as $t\to\8$}.
\end{align*}
Furthermore, if 
$$\nu_s\leq_{\st}\nu_t\ \ (\nu_s\geq_{\st}\nu_t)\text{ for all }s\leq t,$$
then we call $\{\nu_t\}_{t\in\mathbb{R}}$ an increasing (decreasing) connecting orbit from $\nu$ to $\tilde{\nu}$.
\end{enumerate}
\end{definition}

\begin{remark}
A family $\{\nu_t\}_{t\in\R}\subset\PP_2(\R)$ is called an entrance measure if $P_t^*\nu_s=\nu_{t+s}$ for all $t\geq0$, $s\in\R$, and particularly, a connecting orbit from $\nu$ to $\nutd$ is an entrance measure.
\end{remark}

Now we state our main theorems.
Theorem \ref{thm:global-convergence}, \ref{thm:local-convergence} and Corollary \ref{coro:unique-convergence}, \ref{coro:segment-convergence} are abstract results for the equation \eqref{eq:mvsystem}, while  Theorem \ref{thm:double-well}-\ref{thm:double-well-vanish} deal with concrete equations.
In the sequel, we denote by $P_t^*:\PP_2(\R)\to\PP_2(\R)$ the semigroup associated to the equation \eqref{eq:mvsystem}:
\begin{equation}\label{eq:semigroup P}
    P_t^*\mu:=\LL(X_t^{\xi}) \ \text{ with } \ \LL(\xi)=\mu,
\end{equation}
where $X_t^{\xi}$ is the unique solution of \eqref{eq:mvsystem} with initial data $\xi$ at time 0.

\begin{theorem}\label{thm:global-convergence}
Under Assumption \ref{asp:potential}, the following statements hold for the equation \eqref{eq:mvsystem}.
\begin{enumerate}[label=\textnormal{(\roman*)}]
\item \textnormal{(Total Orderedness and Finiteness)} The set of invariant measures $\MM$ is a finite set, totally ordered with respect to the stochastic order, and $\MM\subset\PP_{\8}(\R)$;
\item \textnormal{(Global Convergence to Order Interval)} $\inf_{\leq_{\st}}\MM$ and $\sup_{\leq_{\st}}\MM$ exist.
We denote them by $\underline{\nu}$ and $\overline{\nu}$ respectively.
Then for all bounded sets $B\subset\PP_2(\R)$,
\[
\WW_2(P_t^*B,[\underline{\nu},\overline{\nu}])\to0\ \text{ as $t\to\8$};
\]
\item \textnormal{(Globally Attracting from Below/Above)} For all $\mu\leq_{\st}\underline{\nu}$,
\[
\WW_2(P_t^*\mu,\underline{\nu})\to0\text{ as }t\to\8,
\]
and for all $\mu\geq_{\st}\overline{\nu}$,
\[
\WW_2(P_t^*\mu,\overline{\nu})\to0\text{ as }t\to\8.
\]
\end{enumerate}
\end{theorem}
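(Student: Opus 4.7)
\medskip

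The plan is to combine the order preservation of $P_t^*$ on $(\PP_2(\R),\leq_{\st})$ (Theorem \ref{thm:mvsde-semiflow}) with a coming-down-from-infinity estimate coming from Assumption \ref{asp:potential}(iii). For part (i), every invariant measure of \eqref{eq:mvsystem} is some $\mu_m$ from the arc \eqref{eq:density-function} satisfying the self-consistency relation $\int(x-m)\,\d\mu_m(x)=0$. Proposition \ref{prop:number-invariant-measure}, cited in the introduction, ensures the number of such zeros is finite, so $\MM$ is finite, and Remark \ref{rem:comparison-invariant-measure} yields total orderedness since the arc $\{\mu_m\}$ is itself $\leq_{\st}$-totally ordered. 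Under hyper-dissipativity, the exponent in \eqref{eq:density-function} behaves like $-c|x|^{2+\xd}$ for large $|x|$, giving super-Gaussian tails and uniform bounds on all moments, hence $\MM\subset\PP_\infty(\R)$. Existence of $\underline{\nu}=\min_{\leq_{\st}}\MM$ and $\overline{\nu}=\max_{\leq_{\st}}\MM$ is then immediate.

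For the dynamical statements I would first record a Lyapunov estimate on the second moment that forces every orbit into a fixed ball $B_R\subset\PP_2(\R)$, hence $\WW_2$-precompact (coming down from infinity). Turning to part (iii), assume $\mu\leq_{\st}\underline{\nu}$. Order preservation gives $P_t^*\mu\leq_{\st}\underline{\nu}$ for all $t$, so $\omega(\mu)$ is a non-empty compact invariant subset of the order interval $\{\nu:\nu\leq_{\st}\underline{\nu}\}$. The key step is a Tarski-type fixed-point argument: using the lattice structure and topological continuity of $\leq_{\st}$ on $\PP_2(\R)$ developed in Section \ref{sec:stochastic-order}, one extracts an invariant measure $\nu^\star=\sup_{\leq_{\st}}\omega(\mu)$ lying in $\omega(\mu)$; since $\nu^\star\leq_{\st}\underline{\nu}=\inf_{\leq_{\st}}\MM$, necessarily $\nu^\star=\underline{\nu}$. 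The matching lower bound is provided by a subinvariant envelope $\mu_L\leq_{\st}\mu$: the shifted measure $\mu\ast\delta_{-\theta}$ is $\leq_{\st}\mu$ for every $\theta>0$, and for $\theta$ sufficiently large the dissipative drift at very negative locations forces its orbit to be $\leq_{\st}$-monotone non-decreasing, with limit equal to $\underline{\nu}$. Sandwiching $P_t^*\mu_L\leq_{\st}P_t^*\mu\leq_{\st}\underline{\nu}$ and invoking precompactness forces $\omega(\mu)=\{\underline{\nu}\}$, so $P_t^*\mu\to\underline{\nu}$ in $\WW_2$. The case $\mu\geq_{\st}\overline{\nu}$ is symmetric.

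For part (ii), given a bounded $B\subset\PP_2(\R)$, the lattice property of $\leq_{\st}$ (Section \ref{sec:stochastic-order}) supplies, for each $\mu\in B$, comparators $\mu\wedge\underline{\nu}\leq_{\st}\underline{\nu}$ and $\mu\vee\overline{\nu}\geq_{\st}\overline{\nu}$. Part (iii) yields $\WW_2$-convergence of these two envelope orbits to $\underline{\nu}$ and $\overline{\nu}$, and order preservation pins $P_t^*\mu$ between them, so every subsequential limit of $P_t^*\mu$ lies in the order interval $[\underline{\nu},\overline{\nu}]$. Uniformity of $\WW_2(P_t^*B,[\underline{\nu},\overline{\nu}])\to 0$ over $B$ follows by controlling the $\PP_2$-radius of the envelopes in terms of that of $B$. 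The principal obstacle is the fixed-point/squeeze step in (iii): ruling out any compact invariant set sitting strictly below $\underline{\nu}$ in $(\PP_2(\R),\leq_{\st})$, and upgrading the order-theoretic squeeze to genuine $\WW_2$-convergence. This is exactly where the characterization of order-boundedness, the lattice operations, and the topological properties of $\leq_{\st}$ prepared in Section \ref{sec:stochastic-order}, in concert with the coming-down-from-infinity estimate, do the heavy lifting.
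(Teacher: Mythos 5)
Your outline identifies the right ingredients (order preservation of $P_t^*$, coming down from infinity, compactness of order-bounded sets, total orderedness of the arc $\{\mu_m\}$), and parts of it track the paper's strategy. But the execution diverges from the paper's proof in ways that leave genuine gaps.

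For (i), you invoke Proposition \ref{prop:number-invariant-measure} to get finiteness, but that proposition is conditioned on the zero-crossing number $Z(V')$ being a \emph{finite} odd integer, which Assumption \ref{asp:potential} does not guarantee. The paper instead proves finiteness directly by showing the self-consistency function $F(m)$ of \eqref{eq:self-consistency-function} is real-analytic, its zero set is bounded (a consequence of the already-established order-bounded global attractor), and hence $F$ would vanish identically if there were infinitely many zeros --- impossible. Your argument works only under the extra hypothesis $Z(V')<\infty$; the paper's does not need it.

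For (iii) your route is genuinely different and has two soft spots. First, you assert that $\nu^\star=\sup_{\leq_{\st}}\omega(\mu)$ lies in $\omega(\mu)$; in general the supremum of a compact set need not belong to that set (take two order-unrelated Dirac combinations whose distributional infimum/supremum is a third measure). The paper avoids this by proving membership constructively in Theorem \ref{T:attractor}(ii): one takes a $\WW_2$-convergent subsequence of $P_{t_k}^*\mu_2$ for an external upper bound $\mu_2$ of $K$, and uses closedness of $\leq_{\st}$ to show the limit bounds $\omega(K)$ from above while lying in it; invariance then follows from order preservation and the invariance of $\omega(K)$. Second, the ``subinvariant envelope'' step --- that translating $\mu$ far to the left produces an orbit that is $\leq_{\st}$-monotone nondecreasing with limit $\underline{\nu}$ --- is asserted without justification, and nothing in the paper's machinery supports it. The paper's actual argument for (iii) is simpler and does not need a lower envelope at all: precompactness gives subsequential limits $\pi$ of $P_t^*\rho$; since $\rho\geq_{\st}\overline{\nu}$, order preservation and closedness give $\pi\geq_{\st}\overline{\nu}$; and since $\pi\in\omega(K)$ and $\overline{\nu}=\sup_{\leq_{\st}}\omega(K)$, necessarily $\pi=\overline{\nu}$. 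This relies on first establishing $\omega(K)\subset[\underline{\nu},\overline{\nu}]$, which is exactly the step your proposal tries to sidestep.

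For (ii), the pointwise squeeze $P_t^*(\mu\wedge\underline{\nu})\leq_{\st}P_t^*\mu\leq_{\st}P_t^*(\mu\vee\overline{\nu})$ gives $\omega(\mu)\subset[\underline{\nu},\overline{\nu}]$ for each fixed $\mu$, but the statement requires $\WW_2(P_t^*B,[\underline{\nu},\overline{\nu}])\to0$ \emph{uniformly} over bounded $B$. You gesture at ``controlling the $\PP_2$-radius of the envelopes'' but do not supply the argument. The paper gets this uniformity automatically from the abstract attractor structure theorem: Theorem \ref{thm:mvsde-attractor} produces a compact, order-bounded, eventually positively invariant set $K$ that absorbs bounded sets, and Theorem \ref{T:attractor}(i)--(ii) shows $\omega(K)$ is the global attractor contained in $[\underline{\nu},\overline{\nu}]$; global attraction of bounded sets is then built into the definition. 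To repair your proof you would essentially have to reconstruct Theorem \ref{T:attractor}, so the more economical route is to prove that structure theorem once, as the paper does, and then read off all three claims.
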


\begin{corollary}\label{coro:unique-convergence}
\textnormal{(Global Convergence)} Under Assumption \ref{asp:potential}, if the equation \eqref{eq:mvsystem} has a unique invariant measure $\nu$, then for all bounded sets $B\subset\PP_2(\R)$, $\WW_2(P_t^*B,\nu)\to0$ as $t\to\8$.
\end{corollary}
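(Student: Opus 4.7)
The plan is to deduce this corollary as an immediate specialization of Theorem \ref{thm:global-convergence}(ii). The assumption that the equation admits a unique invariant measure means $\MM=\{\nu\}$, so both the infimum $\underline{\nu}=\inf_{\leq_{\st}}\MM$ and supremum $\overline{\nu}=\sup_{\leq_{\st}}\MM$ furnished by Theorem \ref{thm:global-convergence}(i)--(ii) must coincide with $\nu$. Consequently the order interval degenerates to a single point, $[\underline{\nu},\overline{\nu}]=\{\nu\}$, and the global attraction statement in Theorem \ref{thm:global-convergence}(ii) reads
\[
\WW_2(P_t^*B,\{\nu\})\to 0\ \text{ as }t\to\8,
\]
for every bounded set $B\subset\PP_2(\R)$. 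Unpacking the Hausdorff semi-distance notation, this is exactly $\sup_{\mu\in B}\WW_2(P_t^*\mu,\nu)\to0$, which is the desired global convergence.

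There is no real obstacle to overcome here: the only thing to check is the trivial observation that the semi-distance $\WW_2(\,\cdot\,,[\underline{\nu},\overline{\nu}])$ reduces to $\WW_2(\,\cdot\,,\nu)$ when the interval collapses. All substantive content—finiteness and total orderedness of $\MM$ to ensure that the infimum and supremum are attained, and the global convergence to the order interval—has already been done inside Theorem \ref{thm:global-convergence}, so the corollary is a one-line deduction.
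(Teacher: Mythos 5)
Your proposal matches the paper's own proof: both observe that uniqueness forces $\underline{\nu}=\overline{\nu}=\nu$, so the order interval $[\underline{\nu},\overline{\nu}]$ collapses to the singleton $\{\nu\}$ (by antisymmetry of $\leq_{\st}$), after which the conclusion is a direct restatement of Theorem \ref{thm:global-convergence}(ii). Correct and essentially identical in approach.
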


\begin{definition}\label{def:zero-crossing}
The zero-crossing number of a continuous function $f:\R\to\R$, denoted by $Z(f)$, is the maximal integer $N$ such that, there exists an increasing sequence $x_1<x_2<\cdots<x_N<x_{N+1}$ satisfying $f(x_n)f(x_{n+1})<0$ for all $n=1,2,\dots,N$.
\end{definition}

\begin{definition}\label{def:locally dissipative}
The equation \eqref{eq:mvsystem} is called locally dissipative at $a\in\R$ with configuration $(r,g)$, if
\begin{enumerate}[label=(\roman*)]
\item the function $g:\R_+\times\R_+\to\R$ satisfies, for all $x\in\R$, $\mu\in\PP_2(\R)$,
\begin{equation*}
-2xV'(x+a)-2x(W'*\mu)(x)+\xs^2(x+a)\leq -g(x^2,\|\mu\|_2^2);
\end{equation*}
\item the number $r>0$ and the function $g$ satisfy
\begin{equation*}
\begin{split}
&g(\cdot,r^2) \text{ is continuous and convex};\\
&\inf_{0\leq w\leq r^2}g(z,w)=g(z,r^2);\\
&\inf_{z\geq r^2}g(z,r^2)>0.
\end{split}
\end{equation*}
\end{enumerate}
\end{definition}

\begin{remark}
It is straightforward to verify that Definition \ref{def:locally dissipative} provides an equivalent formulation of \cite[Definition 1.2]{Liu-Qu-Yao-Zhi2024} in the one-dimensional setting. 
The concept of local dissipation was introduced by Zhang \cite{Zhang2023}, and was later applied by Feng-Qu-Zhao \cite{Feng-Qu-Zhao2023b} to the time-inhomogeneous McKean-Vlasov equation and by Bao-Wang \cite{Bao-Wang2025} to the McKean–Vlasov SDEs with jumps.
\end{remark}

\begin{theorem}\label{thm:local-convergence}
Suppose Assumption \ref{asp:potential} holds, and suppose the zero-crossing number $Z(V')$ of $V':\R\to\R$ is $(2n-1)$.
If the equation \eqref{eq:mvsystem} is locally dissipative at $a_1<a_2<\cdots<a_n$ with configurations $\{(r_k,g_k)\}_{k=1}^n$, and if
\begin{equation}\label{eq:local-convergence-condition}
r_k+r_{k+1}\leq a_{k+1}-a_k, \ \text{ for all $k=1,2,\dots,n-1$},
\end{equation}
then
\begin{enumerate}[label=\textnormal{(\roman*)}]
\item \textnormal{(Total Orderedness)} there are exactly $(2n-1)$ invariant measures $\{\nu_k\}_{k=1}^{2n-1}$ with $\nu_1<_{\st}\nu_2<_{\st}\cdots<_{\st}\nu_{2n-1}$;
\item \textnormal{(Positively Invariant Open Neighbourhoods)} the open balls $\{B(\xd_{a_k},r_k)\}_{k=1}^n$ are pairwise disjoint, and there hold
\begin{equation}\label{eq:nu in ball}
\nu_{2k-1}\in B(\xd_{a_k},r_k), \ \text{ for all } k=1,2,\dots,n,
\end{equation}
and
\begin{equation}\label{eq:nu notin ball}
\nu_{2k}\notin \overline{B(\xd_{a_k},r_k)}\bigcup\overline{B(\xd_{a_{k+1}},r_{k+1})}, \ \text{ for all } k=1,2,\dots,n-1.
\end{equation}
Moreover, $P^*_tB(\xd_{a_k},r_k)\subset B(\xd_{a_k},r_k)$ for all $k=1,2,\dots,n$ and $t\geq 0$;
\item \textnormal{(Connecting Orbits)} for all $k=1,2,\dots,n-1$, there are a decreasing connecting orbit $\{\nu^{k,\downarrow}_t\}_{t\in \R}$ from $\nu_{2k}$ to $\nu_{2k-1}$ and an increasing connecting orbit $\{\nu^{k,\uparrow}_t\}_{t\in \R}$ from $\nu_{2k}$ to $\nu_{2k+1}$;
\item \textnormal{(Locally Attracting)} for all $k=1,2,\dots,n$,
\[
\WW_2(P_t^*B(\xd_{a_k},r_k),\nu_{2k-1})=\sup_{\mu\in B(\xd_{a_k},r_k)}\WW_2(P_t^*\mu,\nu_{2k-1})\to0\text{ as }t\to\8;
\]
\item \textnormal{(Attracting Unbounded Open Neighbourhoods)}
if $\mu\leq_{\st}\nu$ for some $\nu\in B(\xd_{a_1},r_1)$, then
\[
\WW_2(P_t^*\mu,\nu_1)\to0\text{ as }t\to\8.
\]
If $\mu\geq_{\st}\nu$ for some $\nu\in B(\xd_{a_n},r_n)$, then
\[
\WW_2(P_t^*\mu,\nu_{2n-1})\to0\text{ as }t\to\8.
\]
In particular,
\[
\WW_2(P_t^*\mu,\nu_1)\to0\text{ as }t\to\8\ \text{ for all $\mu\in\bigcup_{a\leq0}B(\xd_{a_1+a},r_1)$},
\]
\[
\WW_2(P_t^*\mu,\nu_{2n-1})\to0\text{ as }t\to\8\ \text{ for all $\mu\in\bigcup_{a\geq0}B(\xd_{a_n+a},r_n)$}.
\]
\end{enumerate}
\end{theorem}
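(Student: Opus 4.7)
The plan combines three tools: the upper bound $|\MM|\leq Z(V')=2n-1$ from Proposition \ref{prop:number-invariant-measure}, It\^o's formula applied to $|X_t-a_k|^2$ together with the local dissipativity inequality, and the order-preservation of $P_t^*$ (Theorem \ref{thm:mvsde-semiflow}). I would begin with (ii). Setting $Y_t:=X_t-a_k$ and $y(t):=\E[Y_t^2]=\WW_2^2(\LL(X_t),\xd_{a_k})$, It\^o's formula combined with Definition \ref{def:locally dissipative}(i), applied with $x=Y_t$ and $\mu=\LL(Y_t)$ (using the convolution identity $(W'*\LL(X_t))(Y_t+a_k)=(W'*\LL(Y_t))(Y_t)$ since $W$ is quadratic), yields $y'(t)\leq-\E[g_k(Y_t^2,y(t))]$. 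While $y(t)\leq r_k^2$, the monotonicity $\inf_{0\leq w\leq r_k^2}g_k(z,w)=g_k(z,r_k^2)$ followed by Jensen's inequality via the convexity of $g_k(\cdot,r_k^2)$ gives the scalar inequality $y'(t)\leq-g_k(y(t),r_k^2)$. The property $\inf_{z\geq r_k^2}g_k(z,r_k^2)>0$ then forbids $y$ from reaching $r_k^2$, establishing positive invariance; pairwise disjointness follows from $(r_k+r_{k+1})^2\leq 2(r_k^2+r_{k+1}^2)\leq(a_{k+1}-a_k)^2$.

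For (i), convexity of $B(\xd_{a_k},r_k)$ in $\PP_2(\R)$ and hyper-dissipativity imply that Ces\`aro averages of any orbit starting inside the ball remain tight in the ball and accumulate at an invariant measure $\nu_{2k-1}\in\overline{B(\xd_{a_k},r_k)}$; the strict dissipative bound forces $\nu_{2k-1}\in B(\xd_{a_k},r_k)$. This produces $n$ distinct stable invariant measures. Combined with $|\MM|\leq 2n-1$ and the total orderedness from Theorem \ref{thm:global-convergence}(i), $\MM$ is a totally ordered arc which, via Remark \ref{rem:comparison-invariant-measure} and the parametric family $\{\mu_m\}$ of \eqref{eq:density-function}, coincides with the zero set of the self-consistency function $f(m)=\int x\,\d\mu_m(x)-m$. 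The sign pattern of $f$ is controlled by the $2n-1$ sign changes of $V'$, producing interlacing stable zeros (where $f$ crosses from $+$ to $-$) and unstable zeros (from $-$ to $+$); this forces exactly one unstable $\nu_{2k}$ between the $k$-th and $(k+1)$-st stable balls, yielding $|\MM|=2n-1$ with the ordering $\nu_1<_{\st}\cdots<_{\st}\nu_{2n-1}$ and positions \eqref{eq:nu in ball}--\eqref{eq:nu notin ball} (any $\nu_{2k}$ in the closure of a stable ball would, by Step 1, collapse onto the invariant measure inside).

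For (iv), fix $\mu\in B(\xd_{a_k},r_k)$; the $\omega$-limit set $\omega(\mu)$ is nonempty, $\WW_2$-compact (by hyper-dissipative tightness), $P_t^*$-invariant and contained in $\overline{B(\xd_{a_k},r_k)}$. The generalized lattice property of $(\PP_2(\R),\leq_{\st})$ developed in Section \ref{sec:stochastic-order} produces $\eta^-:=\inf_{\leq_{\st}}\omega(\mu)$ and $\eta^+:=\sup_{\leq_{\st}}\omega(\mu)$, both $P_t^*$-invariant and lying in $\overline{B(\xd_{a_k},r_k)}$; by (i) both must equal $\nu_{2k-1}$, collapsing $\omega(\mu)\subset[\eta^-,\eta^+]=\{\nu_{2k-1}\}$ and giving $P_t^*\mu\to\nu_{2k-1}$. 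Uniformity over the ball follows because hyper-dissipativity pushes every orbit into a common $\WW_2$-compact set after a uniform time, on which equicontinuity of $P_t^*$ upgrades pointwise to uniform convergence. For (iii), pick $\xi_n\to\nu_{2k}$ with $\xi_n<_{\st}\nu_{2k}$ and $\xi_n\in B(\xd_{a_k},r_k)$; the orbit $\{P_t^*\xi_n\}_{t\geq0}$ stays $\leq_{\st}\nu_{2k}$, is $\leq_{\st}$-monotone decreasing and converges to $\nu_{2k-1}$ by (iv). Time-translating by $t_n$ chosen so $P_{t_n}^*\xi_n$ lies at a fixed $\WW_2$-distance $\xe$ from $\nu_{2k}$, a diagonal extraction yields a decreasing entrance measure $\{\nu^{k,\down}_t\}_{t\in\R}$ with $\nu^{k,\down}_t\to\nu_{2k}$ as $t\to-\8$ and $\nu^{k,\down}_t\to\nu_{2k-1}$ as $t\to\8$; the increasing connecting orbit to $\nu_{2k+1}$ is symmetric.

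For (v), if $\mu\leq_{\st}\nu$ with $\nu\in B(\xd_{a_1},r_1)$, then $P_t^*\mu\leq_{\st}P_t^*\nu\to\nu_1$ by order-preservation and (iv); together with Theorem \ref{thm:global-convergence}(ii), the $\omega$-limit of $P_t^*\mu$ sits in $[\nu_1,\nu_{2n-1}]\cap\{\eta:\eta\leq_{\st}\nu_1\}=\{\nu_1\}$. The unbounded union $\bigcup_{a\leq 0}B(\xd_{a_1+a},r_1)$ lies in this basin: if $\mu'\in B(\xd_{a_1+a},r_1)$ with $a\leq 0$, let $\nu'$ be the right-translate of $\mu'$ by $-a\geq 0$; then $\mu'\leq_{\st}\nu'$ and $\WW_2(\nu',\xd_{a_1})=\WW_2(\mu',\xd_{a_1+a})<r_1$, so $\nu'\in B(\xd_{a_1},r_1)$. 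The $\nu_{2n-1}$ case is symmetric. The hardest steps will be the interlacing argument in (i), which requires a careful sign analysis of the self-consistency function through the zero-crossing structure of $V'$, and the backward-in-time compactness extraction in (iii), both relying on the precompactness provided by hyper-dissipativity and the lattice machinery of Section \ref{sec:stochastic-order}.
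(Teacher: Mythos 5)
Your treatment of (ii) matches the paper (the It\^o/Jensen argument is exactly Theorem \ref{Thm: Pt invariant set}), and your reduction of (v) to order-preservation plus translation is the same as the paper's. Your route to (i) via the self-consistency function $f(m)=\int x\,\d\mu_m - m$ is genuinely different from the paper's (which applies the trichotomy of Proposition \ref{P:connecting-orbit} on each interval $[\nu_{2k-1},\nu_{2k+1}]$ and excludes connecting orbits via disjoint positively invariant balls); your approach can be made to work, but the step ``stable zeros cross from $+$ to $-$'' needs to be extracted from Theorem \ref{thm:Psi-invariant-neighbourhood} plus Cauchy--Schwarz to pass from the Wasserstein ball $\overline{B(\xd_{a_k},r_k)}$ to the interval of means $[a_k-r_k,a_k+r_k]$; as written, ``the sign pattern of $f$ is controlled by the sign changes of $V'$'' is a gesture rather than an argument. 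The real problems, however, are in (iii) and (iv).

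For (iii), you propose to pick $\xi_n\to\nu_{2k}$ with $\xi_n<_{\st}\nu_{2k}$ \emph{and} $\xi_n\in B(\xd_{a_k},r_k)$. This is impossible: by your own (ii), $\nu_{2k}\notin\overline{B(\xd_{a_k},r_k)}$, so $\nu_{2k}$ is at strictly positive $\WW_2$-distance from $B(\xd_{a_k},r_k)$, and no sequence in the ball can converge to it. If you drop the constraint $\xi_n\in B(\xd_{a_k},r_k)$ you lose the use of (iv) to force $P_t^*\xi_n\to\nu_{2k-1}$; and the claim that the orbit $\{P_t^*\xi_n\}_t$ is $\leq_{\st}$-monotone decreasing is also unjustified---monotonicity of a forward orbit requires $P_s^*\xi_n\leq_{\st}\xi_n$ for some $s>0$, which you never establish. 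The paper circumvents this entirely by invoking the connecting-orbit trichotomy (Proposition \ref{P:connecting-orbit}, which relies on a fixed-point argument on the order interval, not on a limit of orbits) and using (i)--(ii) to eliminate the two alternatives.

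For (iv), your argument collapses $\omega(\mu)$ by claiming $\eta^{\pm}:=\inf_{\leq_{\st}}\omega(\mu)$, $\sup_{\leq_{\st}}\omega(\mu)$ lie in $\overline{B(\xd_{a_k},r_k)}$ and are invariant, hence equal $\nu_{2k-1}$ by (i). Neither assertion follows. The ball $\overline{B(\xd_{a_k},r_k)}$ is \emph{not} an order interval, so the order-theoretic infimum/supremum of a subset of the ball need not remain in the ball (for a discrete analogue: in $\R^2$ with the coordinate order, $\sup\{(0,1),(1,0)\}=(1,1)$ leaves the unit ball). Nor is invariance of $\eta^{\pm}$ automatic: order-preservation only gives $P_t^*\eta^-\leq_{\st}\eta^-$, and the reverse inequality requires $\eta^-\in\omega(\mu)$, which you have not shown. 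This is precisely why the paper works so hard: it bounds $\omega(B(\xd_{a_k},r_k))$ above and below by specific points $\nu^{k,\uparrow}_{t_k}$, $\nu^{k,\downarrow}_{t_k}$ on the \emph{connecting orbits}, using a backward comparison argument (Proposition \ref{pro:bounds-of-limit-set} and Proposition \ref{prop:bounds of omega set}), and then invokes the local-attractor structure theorem (Theorem \ref{T:locally attracting}). Without that backward trace along connecting orbits, the convergence to $\nu_{2k-1}$ in (iv) is not established, and indeed cannot be established by the lattice property alone.
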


\begin{remark}\label{R:invariant measures cannot attract bdd sets}
(i). By Assumption \ref{asp:potential} (i)(iii), the function $V':\R\to\R$ must have an odd zero-crossing number whenever finite.

(ii). 
By definition, a connecting orbit $\{\nu_t\}_{t\in\R}$ from $\nu$ to $\nutd$ as a bounded set itself cannot be attracted by its forward limit $\nutd$, i.e., $\WW_2(P_t^*\{\nu_s:s\in\R\},\nutd)\nrightarrow0$, although $\WW_2(P_t^*\nu_s,\nutd)\to0$ as $t\to\8$ for any $s\in\R$.
So, the set of  invariant measures cannot attract all bounded subsets of $\PP_2(\R)$.
In other word, the order interval $[\underline{\nu},\overline{\nu}]$ in Theorem \ref{thm:global-convergence} (ii) cannot be replaced by  the set of  invariant measures.
\end{remark}

We next present a corollary providing the order position of invariant measures and also explicit expressions for certain densities in the basins of attraction.
Consider a family of probability measures $\{\mu_m\}_{m\in\R}$ on $\R$, whose densities are given by
\begin{equation}\label{eq:density-function}
\dif{\mu_m}{x}\propto\frac{1}{\xs^2(x)}\exp\bigg\{-2\int_0^x\frac{V'(y)+\theta y-\theta m}{\xs^2(y)}\d y\bigg\}.
\end{equation}

\begin{corollary}\label{coro:segment-convergence}
Suppose the assumptions in Theorem \ref{thm:local-convergence} hold, and let $\{\mu_m\}_{m\in\R}$ be probability measures given in \eqref{eq:density-function}.
Then
\begin{enumerate}[label=\textnormal{(\roman*)}]
\item there are increasing values $m_k$, $k=1,2,\dots,2n-1$, such that $\nu_{k}=\mu_{m_k}$;
\item set $m_0=-\8$ and $m_{2n}=\8$.
For all $k=1,2,\dots,n$, and for all $m\in(m_{2k-2},m_{2k})$,
\[
\WW_2(P_t^*\mu_m,\nu_{2k-1})\to0\text{ as }t\to\8.
\]
\end{enumerate}
\end{corollary}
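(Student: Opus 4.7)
The plan is to prove (i) through a simple identification argument via the self-consistency equation and the monotone-likelihood-ratio property of the family $\{\mu_m\}_{m\in\R}$, and to prove (ii) through case analysis combining Theorem \ref{thm:global-convergence}(iii) at the extremes with Remark \ref{R:invariant measures cannot attract bdd sets}(iii) in the interior. For (i), every invariant measure of \eqref{eq:mvsystem} is of the form $\mu_m$ for some $m\in\R$, by the self-consistency equation $\int(x-m)\d\mu_m(x)=0$ already recorded in the introduction, so each $\nu_k$ equals some $\mu_{m_k}$. The map $m\mapsto\mu_m$ is strictly stochastic-order-increasing: for $m_1<m_2$, the Radon--Nikodym derivative $\d\mu_{m_2}/\d\mu_{m_1}(x)$ is proportional to $\exp\{2\theta(m_2-m_1)\int_0^x\xs^{-2}(y)\d y\}$, which is strictly increasing in $x$, so $\mu_{m_1}<_{\st}\mu_{m_2}$ by the monotone-likelihood-ratio property. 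Combined with the total order $\nu_1<_{\st}\cdots<_{\st}\nu_{2n-1}$ from Theorem \ref{thm:local-convergence}(i), this yields $m_1<\cdots<m_{2n-1}$.

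For (ii), I split by the location of $m$. If $m=m_{2k-1}$ then $\mu_m=\nu_{2k-1}$ is invariant, so the claim is trivial. If $m\leq m_1$ or $m\geq m_{2n-1}$, then (i) gives $\mu_m\leq_{\st}\underline{\nu}$ or $\mu_m\geq_{\st}\overline{\nu}$, and Theorem \ref{thm:global-convergence}(iii) yields convergence to $\nu_1$ or $\nu_{2n-1}$. For the remaining cases $m\in(m_{2k-2},m_{2k})\setminus\{m_{2k-1}\}$, I invoke Remark \ref{R:invariant measures cannot attract bdd sets}(iii). The task reduces to producing $t_0\in\R$ satisfying
\[
\nu^{k-1,\uparrow}_{t_0}<_{\st}\mu_m<_{\st}\nu^{k,\downarrow}_{t_0}
\]
(with the obvious one-sided modifications at $k=1$ and $k=n$).

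To construct the sandwich I exploit the monotonicity of the connecting orbits in the stochastic order. Since $\{\nu^{k,\downarrow}_{t_0}\}$ is decreasing in $t_0$ with backward limit $\nu_{2k}$, the family increases to $\nu_{2k}$ in the stochastic order as $t_0\to-\infty$, so the CDF $F_{\nu^{k,\downarrow}_{t_0}}(x)$ decreases pointwise to the continuous limit $F_{\nu_{2k}}(x)\leq F_{\mu_m}(x)$. Dini's theorem then gives uniform convergence on compact sets, and the uniform second-moment control on the $\PP_2$-compact order interval $[\nu_{2k-1},\nu_{2k}]$ supplies the tail estimate needed to extend the uniformity to all of $\R$. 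Combined with the strict inequality $\mu_m<_{\st}\nu_{2k}$ and its quantitative monotone-likelihood-ratio gap, this produces $t_0$ sufficiently negative so that $F_{\nu^{k,\downarrow}_{t_0}}(x)\leq F_{\mu_m}(x)$ for every $x$, i.e., $\mu_m<_{\st}\nu^{k,\downarrow}_{t_0}$. The symmetric argument with $\nu^{k-1,\uparrow}_{t_0}$ provides the other half of the sandwich.

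The main obstacle is exactly this last step, namely upgrading $\PP_2$-convergence of the connecting orbit to eventual \emph{strict} stochastic domination by $\mu_m$, because stochastic order is not in general preserved by weak limits. I plan to handle it by combining the monotone CDF convergence afforded by the connecting-orbit structure (which enables Dini's theorem) with the tail control derived from the hyper-dissipativity in Assumption \ref{asp:potential}(iii), comparing the tail decay rates against the quantitative monotone-likelihood-ratio gap between $\mu_m$ and $\nu_{2k}$.
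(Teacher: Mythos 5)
Your part~(i) is essentially the paper's argument and is fine: identification of invariant measures as members of the family $\{\mu_m\}$ via the self-consistency equation, combined with the strict monotone-likelihood-ratio ordering of $m\mapsto\mu_m$ and Theorem~\ref{thm:local-convergence}(i). For part~(ii) the reduction to the sandwich $\nu^{k-1,\uparrow}_{t_0}\leq_{\st}\mu_m\leq_{\st}\nu^{k,\downarrow}_{t_0}$ is also in the same spirit as the paper (and you correctly handle the trivial cases). But the key step --- producing $t_0$ with $\mu_m\leq_{\st}\nu^{k,\downarrow}_{t_0}$ --- is attacked with a method that does not close, and you have correctly identified it as the main obstacle without actually removing it.

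Here is the concrete problem. Your plan is to show that for $t_0$ sufficiently negative, $F_{\nu^{k,\downarrow}_{t_0}}(x)\leq F_{\mu_m}(x)$ for \emph{every} $x\in\R$, by combining uniform convergence $\sup_x|F_{\nu^{k,\downarrow}_{t_0}}(x)-F_{\nu_{2k}}(x)|\to 0$ with the pointwise gap $F_{\mu_m}(x)-F_{\nu_{2k}}(x)>0$. The uniform convergence part is defensible (Dini on compacts plus the two-sided tail control $\nu_{2k}((x,\infty))\leq\nu^{k,\downarrow}_{t_0}((x,\infty))\leq\nu_{2k-1}((x,\infty))$ coming from $\nu^{k,\downarrow}_{t_0}\in[\nu_{2k-1},\nu_{2k}]$). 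But the gap $F_{\mu_m}(x)-F_{\nu_{2k}}(x)$ tends to $0$ as $x\to\pm\infty$, so it admits no uniform positive lower bound, and uniform CDF convergence alone cannot defeat it. In the hard case $m_{2k-1}<m<m_{2k}$, the order-interval information only pins $F_{\nu^{k,\downarrow}_{t_0}}$ and $F_{\mu_m}$ into the same corridor $[F_{\nu_{2k}},F_{\nu_{2k-1}}]$, with no a priori ordering between them in the tails; one would need to show that the tail deviation $F_{\nu^{k,\downarrow}_{t_0}}(x)-F_{\nu_{2k}}(x)$ decays strictly faster (as $|x|\to\infty$, for fixed negative $t_0$) than $F_{\mu_m}(x)-F_{\nu_{2k}}(x)$, which is a delicate quantitative statement about tails of the time-inhomogeneous Fokker--Planck flow that neither $\PP_2$-compactness nor a second-moment bound delivers, and which you do not prove. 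Closedness of $\leq_{\st}$ runs the wrong way here: $\nu^{k,\downarrow}_{t_0}\to_{\WW_2}\nu_{2k}>_{\st}\mu_m$ does \emph{not} imply $\nu^{k,\downarrow}_{t_0}>_{\st}\mu_m$ eventually, precisely because the strong ordering induced by $\leq_{\st}$ is empty (Proposition~\ref{prop:no-strong-order}).

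The paper avoids this entirely. It does not compare CDFs of the orbit and $\mu_m$ directly. Instead it represents both $\mu_m$ and the connecting-orbit state $\nu^{k,\downarrow}_{t_0}$ as pullback limits $s\to-\infty$ of solutions to two SDEs --- one with the constant mean $m$ in the drift, the other time-inhomogeneous with the orbit's mean $\alpha^{k,\downarrow}_t=\int x\,\d\nu^{k,\downarrow}_t$ --- started from the same initial datum $\delta_0$ at time $s$. Because $\alpha^{k,\downarrow}_t\to m_{2k}>m$ as $t\to-\infty$, one can pick $t_0$ so that $\alpha^{k,\downarrow}_t\geq m$ on $(-\infty,t_0]$; the pathwise comparison theorem (Theorem~\ref{thm:ldsde-comparison}) then gives $\LL(X^{m;s,0}_{t_0})\leq_{\st}\LL(Y^{s,0}_{t_0})$ for every $s\leq t_0$, and letting $s\to-\infty$ via Lemma~\ref{lem:time-inhomogeneous} and closedness of $\leq_{\st}$ yields $\mu_m\leq_{\st}\nu^{k,\downarrow}_{t_0}$ with no tail analysis at all. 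This dynamic, coupling-based argument is the missing ingredient; the static CDF approach, without a substantiated tail-rate comparison, does not yield the conclusion.
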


\begin{remark}
 For each $k=1,2,\dots,2n-2$, although the set $\{\mu_m\}_{m\in(m_k,m_{k+1})}$ is a totally ordered arc with endpoints $\nu_k$, $\nu_{k+1}$, it is not a connecting orbit between invariant measures.
One can check it from the Fokker-Planck equation.
\end{remark}

As an application of our abstract theorems, we present below the classical double-well and multi-well examples.
In contrast to Theorem \ref{T:general V}, the confining potential $V$ is given explicitly in these cases, which allows us to determine explicit parameter ranges and to provide  explicit descriptions of attracting sets.
On the other hand, Proposition \ref{prop:number-invariant-measure} provides a detailed analysis of the number of invariant measures under multiplicative noise, thereby verifying the a priori assumption in the main result of \cite{Liu-Qu-Yao-Zhi2024}. 
Based on this analysis, the subsequent examples not only address basins of attraction with respect to both the partial order and the topology, but also strengthen the results on connecting orbits for the corresponding examples considered in \cite{Liu-Qu-Yao-Zhi2024}. 
In particular, even under multiplicative noise, the classical double-well equation \eqref{eq:double-well} admits connecting orbits; for the multi-well equation \eqref{eq:multi-well} we identify exactly five invariant measures together with a precise heteroclinic structure connecting them via monotone connecting orbits.

\begin{theorem}[Double-well landscapes, Figure \ref{fig:double-well}]\label{thm:double-well}
Consider the following one-dimensional McKean-Vlasov SDE,
\begin{equation}\label{eq:double-well}
\d X_t=-\left[X_t(X_t-1)(X_t+1)+\theta\left(X_t-\bE [X_t]\right)\right]\d t+\xs(X_t)\d B_t.
\end{equation}
If the parameter $\theta$ and the Lipschitz function $\xs\colon\R\to\R$ satisfy
\begin{equation}\label{eq:con-double-well}
    \theta\geq\frac{27(9+\sqrt{17})}{128},\quad 0<\inf_{x\in\R}\xs^2(x)\leq \sup_{x\in\R}\xs^2(x)<\frac{51\sqrt{17}-107}{256},
\end{equation}
and let $r=\frac{9-\sqrt{17}}{8}$, then
\begin{enumerate}[label=\textnormal{(\roman*)}]
\item There are exactly three invariant measures, $\nu_{-1}, \nu_0, \nu_1\in\PP_{\8}(\R)$ with $\nu_{-1}<_{\st}\nu_0<_{\st}\nu_1$;
\item For all bounded sets $B\subset\PP_2(\R)$, $\WW_2(P_t^*B,[\nu_{-1},\nu_1])\to0$ as $t\to\8$;
\item There are a decreasing connecting orbit from $\nu_0$ to $\nu_{-1}$ and an increasing connecting orbit from $\nu_0$ to $\nu_1$;
\item There hold $\nu_{-1}\in B(\xd_{-1},r)$, $\nu_1\in B(\xd_1,r)$, and
\[
\WW_2(P_t^*\mu,\nu_{-1})\to0\ \text{ as $t\to\8$ for all $\mu\in\bigcup_{a\leq0}B(\xd_{-1+a},r)$},
\]
\[
\WW_2(P_t^*\mu,\nu_1)\to0\ \text{ as $t\to\8$ for all $\mu\in\bigcup_{a\geq0}B(\xd_{1+a},r)$}.
\]
\end{enumerate}
\end{theorem}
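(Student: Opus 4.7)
The plan is to apply Theorem \ref{thm:local-convergence} (with $n=2$, $a_1 = -1$, $a_2 = 1$) together with Theorem \ref{thm:global-convergence} to \eqref{eq:double-well}. Item (ii) and the claim $\nu_{-1},\nu_0,\nu_1 \in \PP_{\8}(\R)$ will follow from Theorem \ref{thm:global-convergence} once $\underline\nu = \nu_{-1}$ and $\overline\nu = \nu_1$ are identified, while (i), (iii), (iv) will follow from Theorem \ref{thm:local-convergence} after its hypotheses are verified.

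I would first dispatch the routine inputs. For $V'(x) = x^3 - x$, the factorisation $V'(x) - V'(y) = (x-y)(x^2+xy+y^2-1)$ together with the identity $x^2+xy+y^2 = \tfrac{3}{4}(x+y)^2 + \tfrac{1}{4}(x-y)^2$ yields one-sided Lipschitz with $L=1$ (since $x^2+xy+y^2 \geq 0$) and hyper-dissipativity with $\xa=\tfrac{1}{8}$, $\xd=2$, $\xb=2$ (since $x^2+xy+y^2 \geq \tfrac{1}{4}(x-y)^2$, after bounding the resulting downward parabola in $(x-y)^2$); polynomial growth is immediate, and items (iv)--(v) of Assumption \ref{asp:potential} are built into the hypotheses. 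Since $V'(x)=x(x-1)(x+1)$ changes sign at $-1,0,1$, we have $Z(V')=3$ and $n=2$.

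The crux is verifying local dissipativity at $a\in\{-1,1\}$. By the symmetry $V(-x)=V(x)$ it suffices to treat $a=1$; I set $r := (9-\sqrt{17})/8$ and
\[
g(z,w) := 2z^2 - 6z^{3/2} + (4+2\theta) z - 2\theta\sqrt{zw} - \overline{\xs}^2,
\]
where $\overline{\xs}^2 := \sup_x \xs^2(x) < (51\sqrt{17}-107)/256$ by hypothesis. Writing $(W'*\mu)(x) = \theta(x-m_\mu)$ with $m_\mu := \int y\,\d\mu$ and $-2xV'(x+1) = -2x^4 - 6x^3 - 4x^2$, the dissipation inequality of Definition \ref{def:locally dissipative}(i) reduces termwise to
\[
6\bigl(x^3+|x|^3\bigr) + 2\theta\bigl(|x|\|\mu\|_2 - x m_\mu\bigr) + \bigl(\overline{\xs}^2 - \xs^2(x+1)\bigr) \geq 0,
\]
each summand being nonnegative by $|m_\mu| \leq \|\mu\|_2$. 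To check the conditions in Definition \ref{def:locally dissipative}(ii): $w \mapsto g(z,w)$ is decreasing, so the infimum on $[0,r^2]$ is attained at $r^2$. Convexity of $g(\cdot,r^2)$ in $z$, after substituting $u := \sqrt{z}$ in the second derivative, is equivalent to $h(u) := 8u^3 - 9u^2 + \theta r \geq 0$ for $u \geq 0$; since $h$ is minimised on $[0,\infty)$ at $u=3/4$ with value $\theta r - 27/16$, convexity holds precisely when $\theta \geq 27/(16r) = 27(9+\sqrt{17})/128$. Using the relation $4r^2 - 9r + 4 = 0$ that characterises $r$, direct computation yields $\partial_z g(z,r^2)\big|_{z=r^2} = \theta > 0$ and $g(r^2,r^2) = 2r^4 - 6r^3 + 4r^2 - \overline{\xs}^2 = (51\sqrt{17}-107)/256 - \overline{\xs}^2 > 0$; convexity then forces $g(\cdot,r^2)$ to be non-decreasing on $[r^2,\infty)$, whence $\inf_{z \geq r^2} g(z,r^2) = g(r^2,r^2) > 0$. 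The separation condition is immediate: $r_1^2 + r_2^2 = 2r^2 = (49-9\sqrt{17})/16 < 2 = \tfrac{1}{2}(a_2-a_1)^2$.

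With all hypotheses of Theorems \ref{thm:local-convergence} and \ref{thm:global-convergence} verified, the four conclusions (i)--(iv) follow directly after relabelling the three invariant measures as $\nu_{-1} <_{\st} \nu_0 <_{\st} \nu_1$. The hard part is the sharp algebra of the previous paragraph: one must discover that $r = (9-\sqrt{17})/8$ is precisely the unique root of $4x^2-9x+4=0$ in $(0,1)$, and notice that this single choice simultaneously pins down the minimum location $u=3/4$ of the convexity polynomial $h$ and the critical point $z=r^2$ of the deterministic drift energy $2z^2-6z^{3/2}+4z$, thereby producing the sharp thresholds $27(9+\sqrt{17})/128$ and $(51\sqrt{17}-107)/256$ appearing in the hypothesis.
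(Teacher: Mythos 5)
Your proof is correct and follows essentially the same route as the paper: construct the same configuration function $g(z,w)=2z^2-6z^{3/2}+(4+2\theta)z-2\theta\sqrt{zw}-\overline{\xs}^2$, verify the locally dissipative conditions via the same convexity analysis, then invoke Theorem~\ref{thm:local-convergence} (with Theorem~\ref{thm:global-convergence} supplying (ii) and the $\PP_{\8}$ membership). Your use of the exact identity $4r^2-9r+4=0$ to get $\partial_z g(r^2,r^2)=\theta$ is a slightly cleaner form of the paper's inequality $\partial_z g(r^2,r^2)=4r^2-9r+4+\theta>\theta-1$, but it is the same argument presented a bit more tightly.
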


\begin{figure}[htbp]
\centering
\includegraphics[width=8cm]{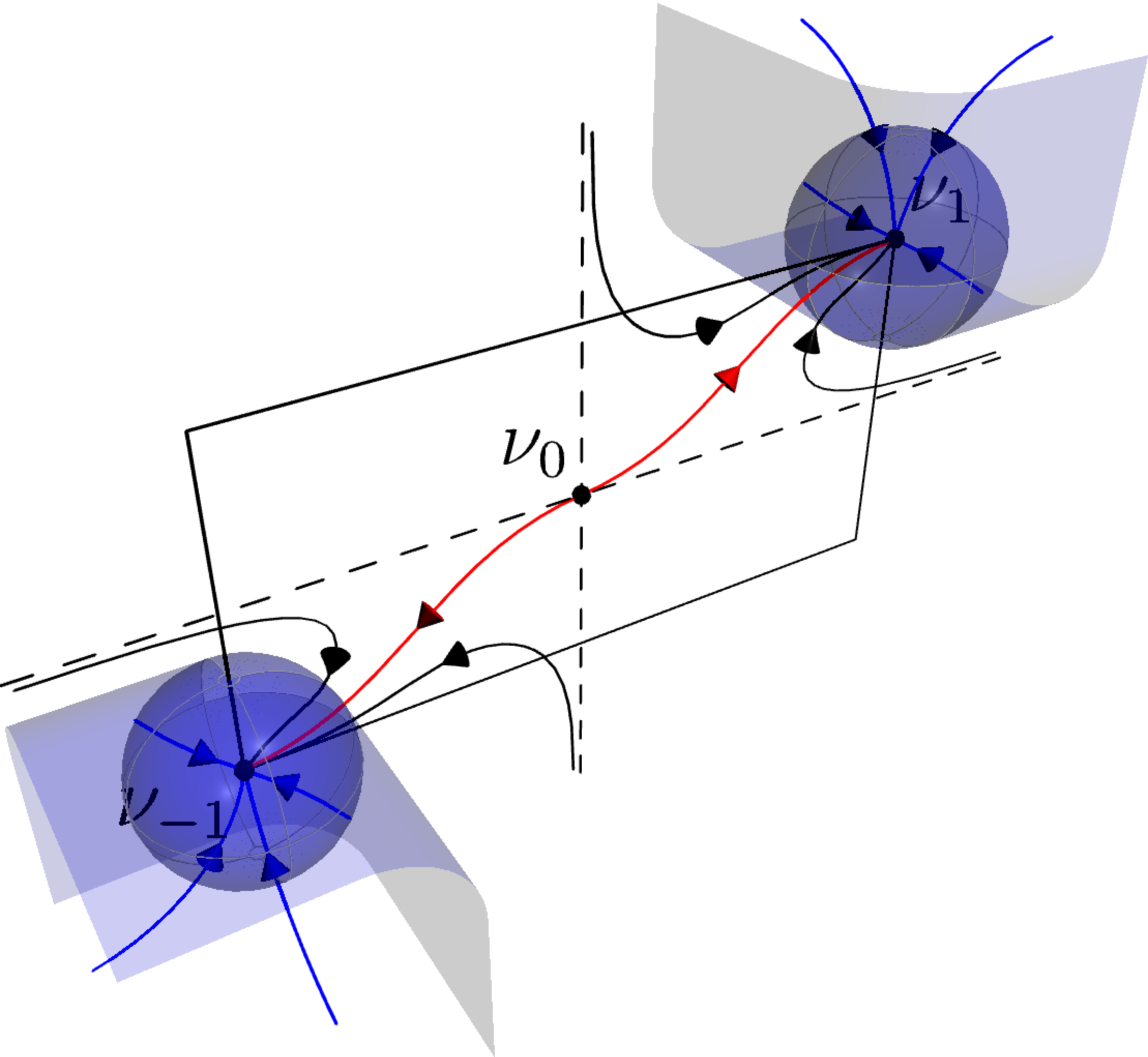}
\caption{Phase diagram for double-well landscapes}
\label{fig:double-well}
\end{figure}

\begin{theorem}[Multi-well landscapes, Figure \ref{fig:multi-well}]\label{thm:multi-well}
Consider the following one-dimensional McKean-Vlasov SDE,
\begin{equation}\label{eq:multi-well}
\d X_t=-\left[X_t(X_t-1)(X_t+1)(X_t-2)(X_t+2)+\theta\left(X_t-\bE [X_t]\right)\right]\d t+\xs(X_t)\d B_t.
\end{equation}
If the parameter $\theta$ and the Lipschitz function $\xs\colon\R\to\R$ satisfy
\[
\theta\geq8\sqrt{5+\sqrt{13}},\quad 0<\inf_{x\in\R}\xs^2(x)\leq \sup_{x\in\R}\xs^2(x)<\frac{4(13\sqrt{13}-35)}{27},
\]
and let $r=\frac{\sqrt{15-3\sqrt{13}}}{3}$, then 
\begin{enumerate}[label=\textnormal{(\roman*)}]
\item There are exactly five invariant measures, $\nu_{-2}, \nu_{-1}, \nu_0, \nu_1, \nu_2\in\PP_{\8}(\R)$ with $\nu_{-2}<_{\st}\nu_{-1}<_{\st}\nu_0<_{\st}\nu_1<_{\st}\nu_2$;
\item For all bounded sets $B\subset\PP_2(\R)$, $\WW_2(P_t^*B,[\nu_{-2},\nu_2])\to0$ as $t\to\8$;
\item There are a decreasing connecting orbit from $\nu_{-1}$ to $\nu_{-2}$ and an increasing connecting orbit from $\nu_{-1}$ to $\nu_0$;\\
there are a decreasing connecting orbit from $\nu_1$ to $\nu_0$ and an increasing connecting orbit from $\nu_1$ to $\nu_2$;
\item There hold $\nu_{-2}\in B(\xd_{-2},r)$, $\nu_0\in B(\xd_0,r)$, $\nu_2\in B(\xd_2,r)$, and
\[
\WW_2(P_t^*\mu,\nu_{-2})\to0\ \text{ as $t\to\8$ for all $\mu\in\bigcup_{a\leq0}B(\xd_{-2+a},r)$},
\]
\[
\WW_2(P_t^*\mu,\nu_{0})\to0\ \text{ as $t\to\8$ for all $\mu\in B(\xd_{0},r)$},
\]
and
\[
\WW_2(P_t^*\mu,\nu_2)\to0\ \text{ as $t\to\8$ for all $\mu\in\bigcup_{a\geq0}B(\xd_{2+a},r)$}.
\]
\end{enumerate}
\end{theorem}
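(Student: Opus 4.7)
The plan is to deduce Theorem \ref{thm:multi-well} by verifying the hypotheses of Theorem \ref{thm:global-convergence} and Theorem \ref{thm:local-convergence} for the equation \eqref{eq:multi-well}. First I would verify Assumption \ref{asp:potential} for $V'(x)=x^5-5x^3+4x$: the polynomial growth (ii) is immediate; the one-sided Lipschitz constant (i) may be taken as $\sup_x(-V''(x))_+=29/4$, attained at $x^2=3/2$; hyper-dissipativity (iii) holds with $\xd=4$, since the quintic leading term forces $(x-y)(V'(x)-V'(y))\ge c_1|x-y|^6-c_2$ for some $c_1,c_2>0$; while (iv) and (v) are direct from the hypotheses on $\theta$ and $\xs$. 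Counting sign changes gives $Z(V')=5$ (five simple zeros at $\{-2,-1,0,1,2\}$ with alternating signs), so Theorem \ref{thm:local-convergence} applies with $n=3$.

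Next I would choose the three wells $a_1=-2$, $a_2=0$, $a_3=2$ (the local minima of $V$) and the common radius $r_1=r_2=r_3=r=(5-\sqrt{13})/3$. The geometric separation $r_k^2+r_{k+1}^2=2r^2\le2=\tfrac12(a_{k+1}-a_k)^2$ holds as $r<1$. Here $r$ is the smaller root of $p'(z)=3z^2-10z+4=0$, where $p(z):=z(z-1)(z-4)$; a direct computation gives $p(r)=\tfrac{2(13\sqrt{13}-35)}{27}$, so the threshold $\overline{\xs}^2<\tfrac{4(13\sqrt{13}-35)}{27}=2p(r)$ is exactly the noise-versus-barrier bound used below.

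For the local dissipativity at $a$, I would write $(W'*\mu)(x)=\theta x-\theta m_\mu$ and use $|m_\mu|\le\|\mu\|_2$ to obtain
\[
-2xV'(x+a)-2x(W'*\mu)(x)+\xs^2(x+a)\le-2xV'(x+a)-2\theta x^2+2\theta|x|\|\mu\|_2+\overline{\xs}^2.
\]
At $a=0$, the odd symmetry of $V'$ makes the right-hand side a function of $(z,w)=(x^2,\|\mu\|_2^2)$, namely $-2p(z)-2\theta z+2\theta\sqrt{zw}+\overline{\xs}^2$, and I would set $g_2(z,w)$ to be a convex lower envelope of $2p(z)+2\theta z-2\theta\sqrt{zw}-\overline{\xs}^2$, tuned so that $g_2(z,r^2)>0$ on $[r^2,\infty)$. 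At $a=2$, $V'(x+2)=x(x+1)(x+2)(x+3)(x+4)$, and the case $a=-2$ reduces to $a=2$ via $x\mapsto-x$ and the oddness of $V'$; here the antidissipative contribution on $x\in(-2,-1)$ (coming from the neighbouring barrier at $x+2=1$) must be dominated by $-2\theta x^2$, which is where the lower bound $\theta\ge8\sqrt{5+\sqrt{13}}$ enters. With these ingredients, items (i), (iii) and (iv) of Theorem \ref{thm:multi-well} follow from Theorem \ref{thm:local-convergence}, while item (ii) follows from Theorem \ref{thm:global-convergence}(ii).

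The main obstacle is the convexity clause in Definition \ref{def:locally dissipative}(ii): the natural pointwise-sharp candidate $g_k(\cdot,r^2)=2p(z)+2\theta z-2\theta r\sqrt{z}-\overline{\xs}^2$ has an inflection at $z=5/3$ and hence is not convex on $[0,5/3]$. I expect to repair this by replacing $g_k(\cdot,r^2)$ on its non-convex portion by the tangent line at a suitably chosen anchor $z_*\ge 5/3$, preserving the positivity condition $\inf_{z\ge r^2}g_k(z,r^2)>0$; the same $r$ must serve all three wells, and reconciling the asymmetric analysis at $a=\pm2$ with the symmetric one at $a=0$ is precisely what pins down the thresholds $\theta\ge8\sqrt{5+\sqrt{13}}$ and $\overline{\xs}^2<2p(r)$.
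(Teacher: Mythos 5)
Your overall plan matches the paper's: construct configurations $(r_k,g_k)$ witnessing local dissipativity at $a=-2,0,2$, check the separation condition $2r^2\le 2$, and invoke Theorem~\ref{thm:local-convergence} (with Theorem~\ref{thm:global-convergence} for item (ii)). The paper does exactly this, taking $g_0(z,w)=2z^3-10z^2+(8+2\theta)z-2\theta z^{1/2}w^{1/2}-\overline{\xs}^2$ at $a=0$ and $g_{\pm2}(z,w)=2z^3-20z^{5/2}+70z^2-100z^{3/2}+(48+2\theta)z-2\theta z^{1/2}w^{1/2}-\overline{\xs}^2$ at $a=\pm2$, where the fractional powers $z^{5/2},z^{3/2}$ come from bounding the odd-degree terms $-20x^5\le 20z^{5/2}$, $-100x^3\le100z^{3/2}$ after expanding $-2xV'(x+2)=-2x^2(x+1)(x+2)(x+3)(x+4)$. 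You should write these out; the heuristic about the ``antidissipative contribution on $x\in(-2,-1)$'' doesn't by itself produce the required inequality in Definition~\ref{def:locally dissipative}(i).

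The main error is the claimed need for a convex-lower-envelope repair. You located the inflection of $2p(z)+2\theta z$ alone (where $12z-20=0$ gives $z=5/3$), but you dropped the convexifying contribution of the $-2\theta r\sqrt{z}$ term. Differentiating $g_0(\cdot,r^2)$ twice gives
\[
\partial_z^2 g_0(z,r^2)=12z-20+\tfrac12\theta r\,z^{-3/2},
\]
and multiplying by $z^{3/2}>0$, the sign agrees with $h(z):=12z^{5/2}-20z^{3/2}+\tfrac12\theta r$, whose minimum over $z>0$ is at $z=1$ (not $5/3$) with value $\tfrac12\theta r-8$. Thus $g_0(\cdot,r^2)$ is globally convex iff $\theta r\ge16$. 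There is in fact an internal inconsistency in the theorem statement: the stated thresholds $\theta\ge8\sqrt{5+\sqrt{13}}$ and $\overline{\xs}^2<\tfrac{4(13\sqrt{13}-35)}{27}$ are exactly what you get with the radius $r=\sqrt{(5-\sqrt{13})/3}$, since $8\sqrt{5+\sqrt{13}}\cdot\sqrt{(5-\sqrt{13})/3}=16$ and $g_0(r^2,r^2)=2p(r^2)-\overline{\xs}^2$ with $r^2=(5-\sqrt{13})/3$ being the maximizer of $p$ on $(0,1)$, giving $2p(r^2)=\tfrac{4(13\sqrt{13}-35)}{27}$. Your own identity $p\big((5-\sqrt{13})/3\big)=\tfrac{2(13\sqrt{13}-35)}{27}$ confirms that the argument of $p$ should be $r^2$, not $r$; with the literal $r=(5-\sqrt{13})/3$ neither the convexity threshold ($\theta r\approx10.9<16$) nor the positivity $g_0(r^2,r^2)>0$ is met. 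So the convexity works out without any envelope construction once $r$ is read as $\sqrt{(5-\sqrt{13})/3}$, and the thresholds are pinned by the central well $a=0$ alone (the $g_{\pm2}$ positivity $g_{\pm2}(r^2,r^2)=2r^2(r-1)(r-2)(r-3)(r-4)-\overline{\xs}^2>0$ is strictly less restrictive), not by ``reconciling'' the three wells.
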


\begin{figure}[htbp]
\centering
\includegraphics[width=8cm]{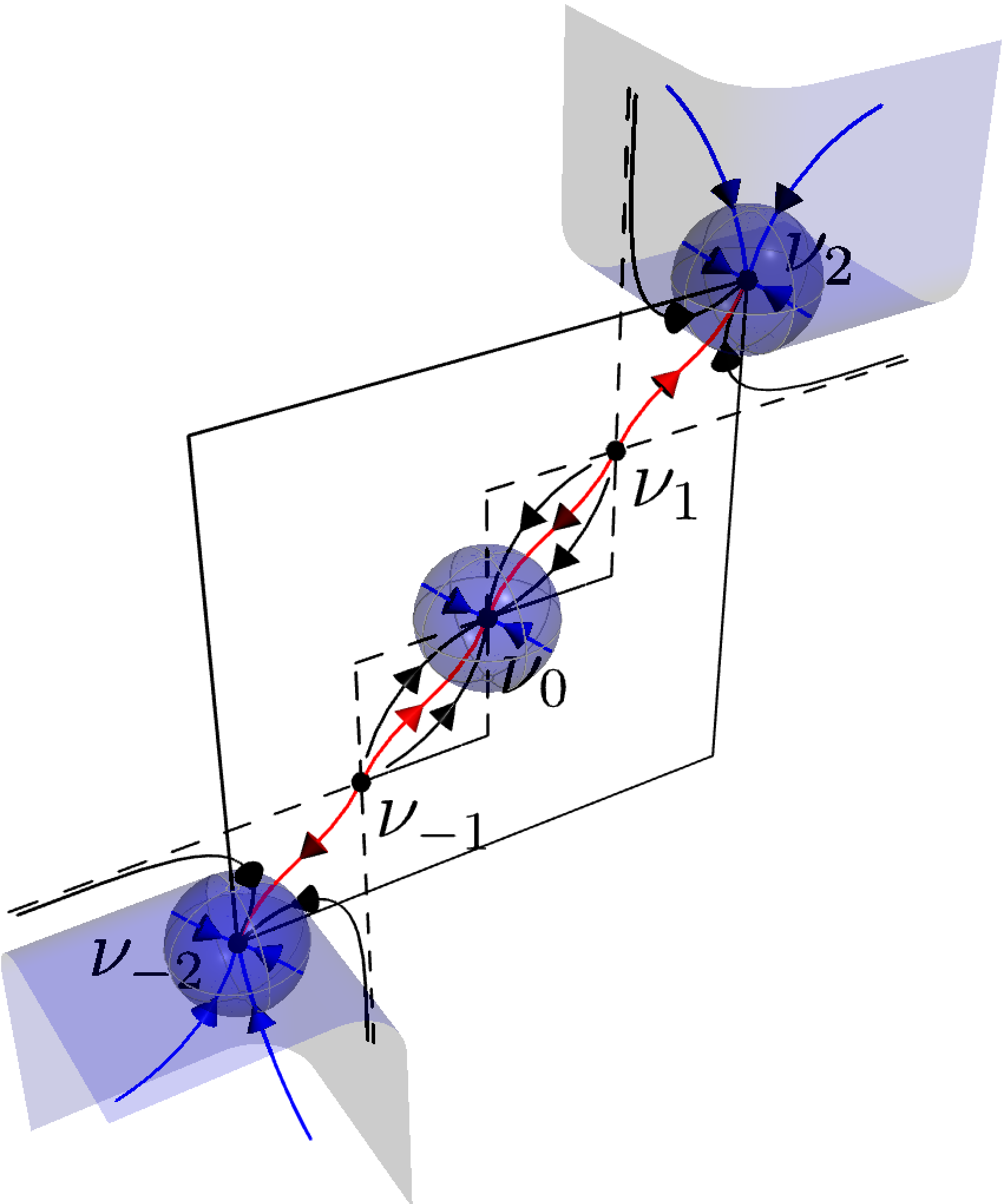}
\caption{Phase diagram for multi-well landscapes}
\label{fig:multi-well}
\end{figure}

Owing to our result on the number of invariant measures (Proposition \ref{prop:number-invariant-measure}), our framework is also capable of handling the case where $V'$ vanishes on an interval.

\begin{theorem}[Double-well with flat bottoms, Figure \ref{fig:double-well}]\label{thm:double-well-vanish}
Consider the following one-dimensional McKean-Vlasov SDE,
\begin{equation}\label{eq:double-well-vanish}
\d X_t=-\left[V'(X_t)+\theta\left(X_t-\bE [X_t]\right)\right]\d t+\xs(X_t)\d B_t.
\end{equation}
where
\[
V'(x)=
\begin{cases}
    x\big(x-\frac{31}{32}\big)\big(x+\frac{31}{32}\big), & x\in \big(-\frac{31}{32},\frac{31}{32}\big),\\
    0, & x\in \big(-1,-\frac{31}{32}\big]\cup\big[\frac{31}{32},1\big),\\
    x(x-1)(x+1), & x\in (-\8,-1]\cup [1, \8).
\end{cases}
\]
If the parameter $\theta$ and the Lipschitz function $\xs\colon\R\to\R$ satisfy
\begin{equation}\label{eq:para-double-well-vanish}
    \theta\geq\frac{27(9+\sqrt{17})}{128},\quad 0<\inf_{x\in\R}\xs^2(x)\leq \sup_{x\in\R}\xs^2(x)<\frac{51\sqrt{17}-171}{256},
\end{equation}
and let $r=\frac{9-\sqrt{17}}{8}$, then the exactly same results as in Theorem \ref{thm:double-well} are valid.
\end{theorem}

\section{Properties of Stochastic Order}\label{sec:stochastic-order}

In this section, some new properties of the stochastic order are examined. 
We first summarize some preliminary materials involving stochastic order in $\PP_2(\R)$. Then we give an equivalent characterization of order boundedness in $\PP_2(\R)$ (see Proposition \ref{prop:order-bounded-equivalent}), which plays a crucial role in the proof of our main results.
As a consequence, we show any bounded subset of $\PP_p(\R)$, $p>2$, is order bounded in $\PP_2(\R)$ (see Corollary \ref{coro:bounded-order-bounded}), and any order bounded subset of $\PP_2(\R)$ is relatively compact (see Proposition \ref{prop:order-bounded-precompact}).

\subsection{Preliminaries of stochastic order}\label{sec:preliminaries of stochastic-order}

Let $\PP(\R^d)$ be the set of probability measures on $\R^d$, and let $\PP_p(\R^d)$ be the set of those having finite $p$-th moments,
\[
\PP_p(\R^d):=\left\{\mu\in\PP(\R^d): \norm{\mu}_p=\left(\int_{\R^d}\abs x^p\d\mu(x)\right)^{1/p}<\8\right\}.
\]
For $p\in[1,\8)$, equip $\PP_p(\R^d)$ with the $p$-Wasserstein metric defined as below,
\[
\WW_p(\mu,\nu):=\inf\Big\{\big(\bE[\abs{X-Y}^p]\big)^{1/p}:\LL(X)=\mu,\, \LL(Y)=\nu\Big\},
\]
where $\LL(\cdot)$ represents the law of a random variable.
Then $(\PP_p(\R^d),\WW_p)$ is a complete metric space (see Villani \cite[Theorem 6.18]{Villani2009}).

Next, we review the so-called ``stochastic order'', which is also known as stochastic domination; we refer to,
e.g., Kamae-Krengel-O’Brien \cite{Kamae1977}, Lindvall \cite[Chapter IV]{Lindvall1992} for a detailed discussion.
Give $\R^d$ an order relation by coordinate-wise comparison, i.e., $x\leq y$ if and only if $x_i\leq y_i$ for all $i=1,2,\dots,d$. 
For $\mu,\nu\in\PP_2(\R^d)$, the stochastic order is defined by the following,
\[
\mu\leq_{\st}\nu\text{ if and only if }\int_{\R^d}f\d\mu\leq\int_{\R^d}f\d\nu\text{ for all bounded increasing functions $f\colon\R^d\to\R$},
\]
where ``increasing'' means $f(x)\leq f(y)$ whenever $x\leq y$.
Then $\leq_{\st}$ is a partial order on $\PP_2(\R^d)$.
Indeed, the reflexivity and transitivity are obvious, and the antisymmetry can be obtained by taking $f=\one_{(x,\8)^d}$ for any $x\in \R^d$. 
Moreover, as shown in \cite[Lemma 2.1]{Liu-Qu-Yao-Zhi2024}, the stochastic order is a closed partial order relation in $\PP_2(\R^d)$, namely, the subset $\{(\mu,\nu):\mu\leq_{\st}\nu\}$ is closed in the product space $\PP_2(\R^d)\times\PP_2(\R^d)$. That is to say, $\mu\leq_{\st}\nu$ whenever $\WW_2(\mu_n,\mu)\to0$ and $\WW_2(\nu_n,\nu)\to0$ as $t\to\8$ and $\mu_n\leq_{\st}\nu_n$ for all $n$.

If $\mu\leq_{\st}\nu$ and $\mu\neq\nu$, we write $\mu<_{\st}\nu$. 
The reversed signs $\geq_{\st}$, $>_{\st}$ are used in the usual way. 
For $\mu,\nu\in\PP_2(\R^d)$ with $\mu\leq_{\st}\nu$, the \emph{order interval} is defined by $[\mu,\nu]=\{\rho\in\PP_2(\R^d):\mu\leq_{\st} \rho\leq_{\st} \nu\}$, $(\mu,\nu]=\{\rho\in\PP_2(\R^d):\mu<_{\st} \rho\leq_{\st} \nu\}$  and similarly we can define $[\mu,\nu)$ and $(\mu,\nu)$.
Measures $\mu,\nu\in\PP_2(\R^d)$ are said to be \emph{order related}, if $\mu\leq_{\st}\nu$ or $\nu\leq_{\st}\mu$ holds.
Otherwise, they are \emph{order unrelated}.
Given two subsets $A$ and $B$ of $\PP_2(\R^d)$, we write $A\leq_{\st} B$ ($A<_{\st}B$) when $\mu\leq_{\st}\nu$ ($\mu<_{\st}\nu$) holds for each choice of $\mu\in A$ and $\nu\in B$.
Sets $A, B\subset\PP_2(\R^d)$ are said to be \emph{order related}, if $A\leq_{\st} B$ or $B\leq_{\st} A$ holds.
A \emph{totally ordered set} (which is also referred as \emph{chain}) $A$ in $\PP_2(\R^d)$ means that, any two measures $\mu$ and $\nu$ in $A$  are order related.

A measure $\mu\in\PP_2(\R^d)$ is an \emph{upper bound} (\emph{lower bound}) of a set $A\subset\PP_2(\R^d)$ if $\mu\geq_{\st} A$ ($\mu\leq_{\st} A$).
If the set $A$ has an upper (lower) bound, it is said to be \emph{bounded from above} (\emph{below}).
A set $A$ which is bounded from both above and below is said to be \emph{order bounded}.
An upper bound $\mu_0$ of $A$ is said to be the \emph{supremum} of $A$, denoted by $\mu_0=\sup_{\leq_{\st}}A$, if any other upper bound $\mu$ satisfies $\mu\geq_{\st} \mu_0$. 
Similarly, a lower bound $\mu_0$ of $A$ is said to be the \emph{infimum} of $A$, denoted by $\mu_0=\inf_{\leq_{\st}}A$, if any other lower bound $\mu$ satisfies $\mu\leq_{\st} \mu_0$. 
By the antisymmetry of the stochastic order, if the supremum (infimum) exists, it must be unique. 

There are some complementary material of the stochastic order that we will use later.
Firstly, a useful equivalent characterization of the stochastic order is Strassen's theorem (see e.g., Lindvall \cite[Theorem IV.2.4]{Lindvall1992} or \cite[equation (3)]{Lindvall1999}), which says $\mu\leq_{\mathrm{st}}\nu$ if and only if there exist two random variables $X,Y$ such that
\[
\LL(X)=\mu,\quad\LL(Y)=\nu,\quad X\leq Y\ \as
\]
Secondly, according to Hiai-Lawson-Lim \cite[Proposition 3.11]{Hiai2018}, test functions $f$ in Definition \ref{def:stochastic-order} can be restricted as bounded continuous increasing.
If $\mu$, $\nu$ both have finite first moments, it is equivalent for test functions ranging over all $1$-Lipschitz increasing functions (see Fritz-Perrone \cite[Theorem 4.2.1]{Fritz2020}), where a $1$-Lipschitz function $f$ means $\abs{f(x)-f(y)}\leq\abs{x-y}$ for all $x,y\in\R^d$.
We may confine test functions in such different sets whenever convenient.
Additionally, an important observation regarding the definition of $\mu\leq_{\st}\nu$, is that when we take the test function as $f=-\one_{(-\infty,x]^d}$ or $f=\one_{(x,\infty)^d}$, we have $\mu\big((-\infty,x]^d\big)\geq\nu\big((-\infty,x]^d\big)$ and $\mu\big((x,\infty)^d\big)\leq\nu\big((x,\infty)^d\big)$, for any $x\in \R^d$.

\subsection{Equivalent characterization of order boundedness}\label{subsec:order-bounded-equivalent}
We begin with  an equivalent characterization of the stochastic order in $\PP_2(\R)$ by comparison of cumulative distribution functions.

\begin{lemma}\label{lem:stochastic-order-distribution}
Let $\mu_1,\mu_2\in\PP_2(\R)$ be two probability measures on $\R$, and let $F_1,F_2:\R\to[0,1]$ be their cumulative distribution functions.
Then $\mu_1\leq_{\st}\mu_2$ if and only if $F_1(x)\geq F_2(x)$ for all $x\in\R$.
\end{lemma}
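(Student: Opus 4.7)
The plan is to prove the two implications separately, where the forward direction is immediate from the definition and the backward direction is handled via the quantile coupling.

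For the forward direction, assume $\mu_1\leq_{\st}\mu_2$. For each fixed $x\in\R$, the indicator $\one_{(x,\8)}:\R\to\R$ is bounded and increasing (value $0$ on $(-\8,x]$ and $1$ on $(x,\8)$), so applying the definition yields
\[
1-F_1(x)=\int_{\R}\one_{(x,\8)}\d\mu_1\leq\int_{\R}\one_{(x,\8)}\d\mu_2=1-F_2(x),
\]
which gives $F_1(x)\geq F_2(x)$. This is essentially the observation already noted at the end of Section \ref{sec:preliminaries of stochastic-order}, so no real work is needed here.

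For the backward direction, assume $F_1\geq F_2$ pointwise. I would build a monotone coupling via the generalized inverses (quantile functions) $F_i^{-1}(u):=\inf\{x\in\R:F_i(x)\geq u\}$ for $u\in(0,1)$. The pointwise inequality $F_1\geq F_2$ implies the inclusion $\{x:F_2(x)\geq u\}\subseteq\{x:F_1(x)\geq u\}$ for every $u\in(0,1)$, hence $F_1^{-1}(u)\leq F_2^{-1}(u)$ for all $u\in(0,1)$. Letting $U$ be a uniform random variable on $(0,1)$ and setting $X_i:=F_i^{-1}(U)$, a standard computation gives $\LL(X_i)=\mu_i$ and $X_1\leq X_2$ almost surely. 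Then for any bounded increasing $f:\R\to\R$, monotonicity yields $f(X_1)\leq f(X_2)$ a.s., so
\[
\int_{\R}f\d\mu_1=\bE[f(X_1)]\leq\bE[f(X_2)]=\int_{\R}f\d\mu_2,
\]
which is exactly $\mu_1\leq_{\st}\mu_2$. Alternatively, one could invoke Strassen's theorem (recalled in Section \ref{sec:preliminaries of stochastic-order}) directly from the a.s. inequality $X_1\leq X_2$.

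There is no serious obstacle in this proof; the only thing to keep track of is the slight asymmetry in the generalized inverse (the direction of the inequality $F_1\geq F_2$ reverses to $F_1^{-1}\leq F_2^{-1}$), and the fact that $f$ being merely increasing (not necessarily continuous) still preserves order under composition with $X_1\leq X_2$. Since the test functions in Definition \ref{def:stochastic-order} are bounded, integrability is automatic and the $\PP_2(\R)$ setting plays no role beyond housing the measures.
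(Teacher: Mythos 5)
Your proof is correct, and the backward direction takes a genuinely different route from the paper's. The paper restricts (via Hiai--Lawson--Lim) to bounded continuous increasing test functions $f$, approximates $f$ from below by step functions $f_n=\sum_k 2^{-n}\one_{\{f>k/2^n\}}$ whose level sets are intervals $(x_k,\infty)$, compares $\mu_1\big((x_k,\infty)\big)\leq\mu_2\big((x_k,\infty)\big)$ termwise using the hypothesis $F_1\geq F_2$, and concludes by monotone convergence. You instead construct the explicit monotone coupling $X_i=F_i^{-1}(U)$ via quantile functions, observe that $F_1\geq F_2$ flips to $F_1^{-1}\leq F_2^{-1}$, so $X_1\leq X_2$ almost surely, and then read off $\int f\,\d\mu_1\leq\int f\,\d\mu_2$ for every bounded increasing $f$ directly. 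Your route is more probabilistic and arguably shorter: it avoids the continuity reduction entirely and produces the coupling asserted by Strassen's theorem as a byproduct rather than invoking it. The paper's route stays purely measure-theoretic and foreshadows the kind of level-set / tail-probability manipulations used later (e.g.\ in Proposition \ref{prop:order-bounded-equivalent}), which is presumably why the authors chose it. Both are standard proofs of this classical equivalence, and no gap exists in yours; the only cosmetic remark is that citing Strassen's theorem at the end is redundant, since the a.s.\ coupling you built already gives the conclusion by the trivial direction of that theorem.
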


\begin{proof}
($\Rightarrow$)
Note that $F_i(x)=\mu_i\big((-\infty,x]\big)$, $i=1,2$.
Take the test function $f=-\one_{(-\infty,x]}$ in the definition of $\mu_1\leq_{\st}\mu_2$, it follows that $F_1(x)\geq F_2(x)$, for any $x\in \R$.

($\Leftarrow$) 
By Hiai-Lawson-Lim \cite[Proposition 3.11]{Hiai2018}, it suffices to show $\int f\d\mu_1\leq\int f\d\mu_2$ for all bounded continuous increasing functions $f$.
Let $f:\R\to\R$ be such a function, and we may assume $f$ is non-negative.
Set
\[
f_n:=\sum_{k=1}^{n2^n}\frac1{2^n}\one_{\{f>k/2^n\}},
\]
and we have $f_n\up f$ as $n\to\8$.
Note that the set $\{f>k/2^n\}$ is an open interval of the form $(x_k,\8)$, where $(\8,\8):=\emptyset$, since $f$ is increasing continuous.
So we have 
\begin{equation*}
    \int f_n\d \mu_i=\sum_{k=1}^{n2^n}\frac{1}{2^n}\mu_i\big((x_k,\8)\big), \ \ i=1,2.
\end{equation*}
By the assumption that $F_1(x)\geq F_2(x)$ for all $x\in \R$, we conclude that for any $x\in \R$,
\begin{equation*}
\mu_1\big((x,\8)\big)=1-F_1(x)\leq 1-F_2(x)=\mu_2\big((x,\8)\big).
\end{equation*}
Then
\[
\int_{\R}f_n\d\mu_1\leq\int_{\R}f_n\d\mu_2\quad\text{ for all $n\geq 1$}.
\]
The monotone convergence theorem gives $\int f\d\mu_1\leq\int f\d\mu_2$.
\end{proof}

\begin{remark}\label{rem:lattice}
Lemma \ref{lem:stochastic-order-distribution} implies the lattice property holds in $\PP_2(\R)$, that is to say, $\sup_{\leq_{\st}}\{\mu_1,\mu_2\}$ exists for any two probability measures $\mu_1, \mu_2\in\PP_2(\R)$.
However, there is no lattice property in $\PP_2(\R^d)$ for $d\geq2$.
See counterexamples in  Kamae-Krengel-O’Brien \cite{Kamae1977} or  M\"{u}ller-Scarsini \cite{Muller2006}.
\end{remark}

We next give an equivalent characterization of order boundedness in $\PP_2(\R)$, which turns out to be very crucial for our approach.

\begin{proposition}\label{prop:order-bounded-equivalent}
Let $M$ be a subset of $\PP_2(\R)$.
Then the following statements are true.
\begin{enumerate}[label=\textnormal{(\roman*)}]
\item $M$ is bounded from above in $\PP_2(\R)$ with respect to the stochastic order if and only if
\begin{equation}\label{eq:order-bounded-equivalent-1}
\int_0^{\8}\bigg(\sup_{\mu\in M}\mu\big((u,\8)\big)\bigg)u\d u<\8.
\end{equation}
\item $M$ is bounded from below in $\PP_2(\R)$ with respect to the stochastic order if and only if
\begin{equation}\label{eq:order-bounded-equivalent-2}
\int_0^{\8}\bigg(\sup_{\mu\in M}\mu\big((-\8,-u]\big)\bigg)u\d u<\8.
\end{equation}
\end{enumerate}
\end{proposition}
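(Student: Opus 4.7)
The plan is to prove part (i) in detail and reduce part (ii) to (i) by reflection: setting $\mu^{-}(A):=\mu(-A)$, one checks $\mu\leq_{\st}\nu \Leftrightarrow \nu^{-}\leq_{\st}\mu^{-}$ and $\mu^{-}((u,\infty))=\mu((-\infty,-u))$ (which differs from $\mu((-\infty,-u])$ on at most countably many $u$, leaving the integral in \eqref{eq:order-bounded-equivalent-2} unchanged). Hence $M$ is bounded below if and only if $M^{-}:=\{\mu^{-}:\mu\in M\}$ is bounded above, reducing (ii) to (i).

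For the easy direction ``$\Rightarrow$'' of (i), take any upper bound $\nu\in\PP_2(\R)$ of $M$. Lemma \ref{lem:stochastic-order-distribution} translates $\mu\leq_{\st}\nu$ into $\mu((u,\infty))\leq\nu((u,\infty))$ for every $u\in\R$. Taking the supremum over $\mu\in M$, multiplying by $u$, integrating on $(0,\infty)$ and applying Fubini gives
\begin{equation*}
\int_0^\infty\sup_{\mu\in M}\mu((u,\infty))\,u\,\d u\leq\int_0^\infty\nu((u,\infty))\,u\,\d u=\tfrac12\int_\R(x\vee0)^2\,\d\nu(x)<\infty.
\end{equation*}

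The substantive direction is ``$\Leftarrow$''. Writing $H(u):=\sup_{\mu\in M}\mu((u,\infty))$, I propose to construct an explicit upper bound $\nu\in\PP_2(\R)$ supported on $[0,\infty)$ with cumulative distribution function
\begin{equation*}
F_\nu(u):=\begin{cases}0,&u<0,\\1-H(u),&u\geq0.\end{cases}
\end{equation*}
Three things must be checked. (a) $F_\nu$ is a bona fide cumulative distribution function: monotonicity is inherited from $H$ being non-increasing; right-continuity on $(-\infty,0)$ is trivial and on $[0,\infty)$ it reduces to the right-continuity of $H$, which I would establish by an $\varepsilon$-extraction argument using the right-continuity of each map $v\mapsto\mu((v,\infty))=1-F_\mu(v)$; the boundary condition $F_\nu(+\infty)=1$ is equivalent to $H(u)\to 0$, and since $H$ is non-increasing with limit $L\geq 0$, any $L>0$ would force $\int_0^\infty H(u)\,u\,\d u\geq L\int_N^\infty u\,\d u=\infty$, violating \eqref{eq:order-bounded-equivalent-1}. (b) $\nu$ dominates every $\mu\in M$: for $u\geq 0$, $F_\nu(u)=1-H(u)=\inf_{\mu\in M}F_\mu(u)\leq F_\mu(u)$, and for $u<0$ the inequality $F_\nu(u)=0\leq F_\mu(u)$ is trivial, so Lemma \ref{lem:stochastic-order-distribution} yields $\mu\leq_{\st}\nu$. (c) $\nu\in\PP_2(\R)$: since $\nu$ is supported on $[0,\infty)$,
\begin{equation*}
\int_\R x^2\,\d\nu(x)=2\int_0^\infty u\,\nu((u,\infty))\,\d u=2\int_0^\infty H(u)\,u\,\d u<\infty.
\end{equation*}

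The only mildly non-trivial step is the right-continuity of $H$, which I anticipate as the main obstacle; once that is settled, everything else is routine bookkeeping between $F_\mu$ and $\mu((u,\infty))$ through Lemma \ref{lem:stochastic-order-distribution} together with Fubini's theorem. The key conceptual point is that cutting off the negative tail by setting $F_\nu\equiv 0$ on $(-\infty,0)$ produces an $[0,\infty)$-supported measure, which automatically has vanishing left-tail contribution to the second moment and therefore transfers the hypothesis directly into $\nu\in\PP_2(\R)$.
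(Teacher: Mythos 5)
Your proof is correct and follows the same overall architecture as the paper's (translate the order into a pointwise inequality of distribution functions via Lemma \ref{lem:stochastic-order-distribution}, build a candidate upper bound from the infimum of the $F_\mu$, verify it lies in $\PP_2(\R)$), but the construction of the bounding measure differs in a way that genuinely streamlines the argument. The paper takes $F_\nu = \inf_{\mu\in M}F_\mu$ on all of $\R$, which produces the exact supremum of $M$ (used later in Remark \ref{rem:order-sup-inf}); the price is that showing $\nu\in\PP_2(\R)$ requires splitting the second moment into positive and negative parts and controlling the negative tail by comparison with an arbitrary fixed $\mu\in M$ (the chain of estimates in \eqref{pf:order-bounded-equivalent-5}--\eqref{pf:order-bounded-equivalent-8}). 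You instead set $F_\nu \equiv 0$ on $(-\infty,0)$, so $\nu$ lives on $[0,\infty)$ and the second moment collapses to a single application of Tonelli, $\int x^2\,\d\nu = 2\int_0^\infty H(u)\,u\,\d u$, which is exactly the hypothesis; you pay by no longer producing $\sup_{\leq_{\st}}M$ (your $\nu$ strictly dominates the paper's when $M$ has left mass), which costs nothing for the proposition as stated but loses the bonus of Remark \ref{rem:order-sup-inf}. Your right-continuity argument for $H$ is in substance the same as the paper's for $F$ (indeed $H = 1-F$ on $[0,\infty)$), and the $\varepsilon$-extraction you sketch is sound: for fixed $u$, pick $\mu_0$ with $\mu_0((u,\infty))>H(u)-\varepsilon/2$, then right-continuity of $v\mapsto\mu_0((v,\infty))$ gives $H(v)\geq\mu_0((v,\infty))>H(u)-\varepsilon$ for $v$ just above $u$. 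Your explicit reflection argument for part (ii) is a nice addition; the paper only says ``similar.'' One tiny inefficiency: in ruling out $\lim_{u\to\infty}H(u)=L>0$ you can drop the $N$ and note directly that $H\geq L$ everywhere (monotonicity) forces $\int_0^\infty H(u)\,u\,\d u \geq L\int_0^\infty u\,\d u = \infty$.
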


\begin{proof}
We only prove (i), and the proof of (ii) is in a similar fashion.

($\Rightarrow$)
Suppose $M\leq_{\st}\overline{\mu}$ for some $\overline{\mu}\in\PP_2(\R)$.
Then by the definition of stochastic order, we have for all $u\in\R$,
\[
\sup_{\mu\in M}\mu\big((u,\8)\big)\leq\overline{\mu}\big((u,\8)\big).
\]
So, it follows
\begin{equation}\label{pf:order-bounded-equivalent-1}
\int_0^{\8}\bigg(\sup_{\mu\in M}\mu\big((u,\8)\big)\bigg)u\d u
\leq\int_0^{\8}\overline{\mu}\big((u,\8)\big)u\d u
=\int_0^{\8}\int_{\R}\one_{\{x>u\}}\d\overline{\mu}(x)u\d u.
\end{equation}
By Tonelli's theorem, we switch the integrals and obtain
\begin{equation}\label{pf:order-bounded-equivalent-2}
\int_0^{\8}\int_{\R}\one_{\{x>u\}}\d\overline{\mu}(x)u\d u=\int_{\R}\one_{\{x>0\}}\int_0^xu\d u\d\overline{\mu}(x)=\frac12\int_{\R}x^2\one_{\{x>0\}}\d\overline{\mu}(x)<\8.
\end{equation}
Combining \eqref{pf:order-bounded-equivalent-1} and \eqref{pf:order-bounded-equivalent-2} finishes the ``only if'' part.

($\Leftarrow$)
Note $\sup_{\mu\in M}\mu\big((u,\8)\big)$ is decreasing in $u$, and then by condition \eqref{eq:order-bounded-equivalent-1}, we have
\begin{equation}\label{pf:order-bounded-equivalent-3}
\sup_{\mu\in M}\mu\big((u,\8)\big)\to0\ \text{ as $u\to\8$}.
\end{equation}
Let $F_{\mu}$ be the cumulative distribution function of $\mu$, and define a function $F:\R\to[0,1]$ by
\[
F(x):=\inf_{\mu\in M}F_{\mu}(x).
\]
We want to show $F$ is also a cumulative distribution function.
It is obvious that $F$ is increasing, and $\lim_{x\to-\8}F(x)=0$.
Also, \eqref{pf:order-bounded-equivalent-3} gives $\lim_{x\to\8}F(x)=1$.
Since $F_{\mu}$ is right continuous for every $\mu\in M$, we claim $F$ is also right continuous.
In fact, by the monotonicity of $F$ and right continuity of $F_{\mu}$, we have
\[
F(x)\leq\lim_{x_n\down x}F(x_n)=\lim_{x_n\down x}\inf_{\mu\in M}F_{\mu}(x_n)\leq\inf_{\mu\in M}\lim_{x_n\down x}F_{\mu}(x_n)=\inf_{\mu\in M}F_{\mu}(x)=F(x).
\]
Thus, $F$ is a cumulative distribution function of some probability measure $\nu$.
Then, we observe for all $x\in\R$,
\begin{equation}\label{pf:order-bounded-equivalent-4}
\nu\big((x,\8)\big)=1-F(x)=1-\inf_{\mu\in M}\mu\big((-\8,x]\big)=\sup_{\mu\in M}\big(1-\mu\big((-\8,x]\big)\big)=\sup_{\mu\in M}\mu\big((x,\8)\big).
\end{equation}

Now we show $\nu\in\PP_2(\R)$.
Split the second moment of $\nu$ into two parts,
\begin{equation}\label{pf:order-bounded-equivalent-5}
\int_{\R}\abs x^2\d\nu(x)=\int_{\R}\abs x^2\one_{\{x\leq0\}}\d\nu(x)+\int_{\R}\abs x^2\one_{\{x>0\}}\d\nu(x).
\end{equation}
On the one hand, using Tonelli's theorem and \eqref{pf:order-bounded-equivalent-4}, we see for some fixed $\mu\in M\subset\PP_2(\R)$,
\begin{equation}\label{pf:order-bounded-equivalent-6}
\begin{aligned}
\int_{\R}\abs x^2\one_{\{x\leq0\}}\d\nu(x)
&=2\int_{\R}\int_{\R}(-u)\one_{\{x\leq u\leq0\}}\d u\cdot\one_{\{x\leq0\}}\d\nu(x)\\
&=2\int_{\R}\int_{\R}\one_{\{x\leq u\}}\d\nu(x)\cdot(-u)\one_{\{u\leq0\}}\d u\\
&=2\int_{\R}\nu\big((-\8,u]\big)\cdot(-u)\one_{\{u\leq0\}}\d u\\
&\leq2\int_{\R}\mu\big((-\8,u]\big)\cdot(-u)\one_{\{u\leq0\}}\d u\\
&=\int_{\R}\abs x^2\one_{\{x\leq0\}}\d\mu(x)<\8.
\end{aligned}
\end{equation}
On the other hand, use Tonelli's theorem again,
\begin{equation}\label{pf:order-bounded-equivalent-7}
\int_{\R}\abs x^2\one_{\{x>0\}}\d\nu(x)
=2\int_{\R}\int_{\R}u\one_{\{0<u<x\}}\d u\cdot\one_{\{x>0\}}\d\nu(x)
=2\int_{\R}\nu\big((u,\8)\big)\cdot u\one_{\{u>0\}}\d u.
\end{equation}
Combine \eqref{eq:order-bounded-equivalent-1}, \eqref{pf:order-bounded-equivalent-4}, \eqref{pf:order-bounded-equivalent-7}, and we see
\begin{equation}\label{pf:order-bounded-equivalent-8}
\int_{\R}\abs x^2\one_{\{x>0\}}\d\nu(x)
=2\int_0^{\8}\bigg(\sup_{\mu\in M}\mu\big((x,\8)\big)\bigg)\d u<\8.
\end{equation}
It follows from \eqref{pf:order-bounded-equivalent-5}, \eqref{pf:order-bounded-equivalent-6} and \eqref{pf:order-bounded-equivalent-8} that $\int_{\R}\abs x^2\d\nu(x)<\8$, so $\nu\in\PP_2(\R)$.

It remains to show that $\nu\geq_{\st}M$.
Since $F_{\nu}=F=\inf_{\mu\in M}F_{\mu}$, we have $F_{\nu}\leq F_{\mu}$ for all $\mu\in M$.
Lemma \ref{lem:stochastic-order-distribution} yields that $\nu\geq_{\st}\mu$ for all $\mu\in M$, which completes the proof.
\end{proof}

\begin{remark}\label{rem:order-sup-inf}
Actaully, the condition \eqref{eq:order-bounded-equivalent-1} guarantees the existence of $\sup_{\leq_{\st}}M$ in $\PP_2(\R)$.
Let $\nu\in\PP_2(\R)$ be the probability measure constructed in the proof of Proposition \ref{prop:order-bounded-equivalent}, and we can show $\nu=\sup_{\leq_{\st}}M$.
Suppose $\nutd\geq_{\st}M$, and we need to show $\nutd\geq_{\st}\nu$.
Using Lemma \ref{lem:stochastic-order-distribution}, we have $F_{\nutd}\leq F_{\mu}$ for all $\mu\in M$.
Hence, $F_{\nutd}\leq\inf_{\mu\in M}F_{\mu}=F_{\nu}$, then we apply Lemma \ref{lem:stochastic-order-distribution} again to obtain $\nutd\geq_{\st}\nu$.
This verifies $\nu=\sup_{\leq_{\st}}M$.
Likewise, the condition \eqref{eq:order-bounded-equivalent-2} implies the existence of $\inf_{\leq_{\st}}M$ in $\PP_2(\R)$.
\end{remark}

\begin{remark}
    We remark here that in higher dimensional case, there is also an equivalent characterization of order boundedness for a subset of $\PP_2(\R^d)$: 

    \emph{A subset $M\subset \PP_2(\R^d)$ is order bounded from above (below) in $\PP_2(\R^d)$ if and only if the marginals of $M$ on each component are order bounded from above (below).}

    We give a brief proof here. 
    ($\Rightarrow$) is obvious, and we only show ($\Leftarrow$).
We will find a lower bound $\nu\in\PP_2(\R^d)$ of $M$, and the construction of an upper bound is similar, which is omitted.

For $x\in\R^d$, write $\xht=\min\{x_1,\dots,x_d\}$.
Define a function $\xf:\R^d\to\R^d$ by $x\mapsto(\xht,\dots,\xht)$.
If $\mu\in\PP_2(\R^d)$, define $\muht:=\mu\circ\xf^{-1}$.
Then it is easy to see $\muht\leq_{\st}\mu$, and that $\muht$ is supported on the diagonal $\xD=\{x\in\R^d:x_1=\cdots=x_d\}$.
Since this diagonal $\xD$ is a totally ordered line in $\R^d$, we can regard $\muht$ as a probability measure on a one-dimensional space.
That is to say, for $u\in\R$,
\begin{equation}\label{pf:higher-dimension-order-bounded-equivalent-1}
\muht\big((-\8,u]\big)=\mu\big(\R^d\setminus(u,\8)^d\big)\leq\sum_{i=1}^d\mu\big(\R^{i-1}\times(-\8,u]\times\R^{d-i}\big).
\end{equation}
Given that marginals of $M$ on each component are order bounded below, Proposition \ref{prop:order-bounded-equivalent} (ii) implies, for each $i=1,2,\dots,d$,
\begin{equation}\label{pf:higher-dimension-order-bounded-equivalent-2}
\int_0^{\8}\bigg(\sup_{\mu\in M}\mu\big(\R^{i-1}\times(-\8,-u]\times\R^{d-i}\big)\bigg)u\d u<\8.
\end{equation}
Combining \eqref{pf:higher-dimension-order-bounded-equivalent-1}\eqref{pf:higher-dimension-order-bounded-equivalent-2} yields
\[
\int_0^{\8}\bigg(\sup_{\mu\in M}\muht\big((-\8,-u]\big)\bigg)u\d u\leq\sum_{i=1}^d\int_0^{\8}\bigg(\sup_{\mu\in M}\mu\big(\R^{i-1}\times(-\8,-u]\times\R^{d-i}\big)\bigg)u\d u<\8,
\]
so there is some $\nu\in\PP_2(\xD)$ such that $\nu\leq_{\st}\muht$ for all $\mu\in M$.
Now we observe $\nu\in\PP_2(\R^d)$, and $\nu\leq_{\st}\muht\leq_{\st}\mu$ for all $\mu\in M$.
This finishes the proof.
\end{remark}

\begin{corollary}\label{coro:bounded-order-bounded}
If $M$ is a bounded subset of $\PP_p(\R)$ for some $p>2$, then $M$ is order bounded in $\PP_2(\R)$.
\end{corollary}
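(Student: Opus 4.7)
The plan is to reduce the claim to the two tail integral conditions given by Proposition \ref{prop:order-bounded-equivalent}, and then control the tails uniformly over $M$ via Markov's inequality applied in $\PP_p(\R)$ for $p>2$.

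First, I would set $C:=\sup_{\mu\in M}\|\mu\|_p^p<\8$, which is finite by hypothesis. For any $u>0$ and any $\mu\in M$, Markov's inequality yields
\[
\mu\big((u,\8)\big)\leq\mu\big(\{x:|x|^p>u^p\}\big)\leq\frac{\|\mu\|_p^p}{u^p}\leq\frac{C}{u^p},
\]
and symmetrically $\mu\big((-\8,-u]\big)\leq C/u^p$. Taking the supremum over $\mu\in M$ on the left, the same bound controls $\sup_{\mu\in M}\mu\big((u,\8)\big)$ and $\sup_{\mu\in M}\mu\big((-\8,-u]\big)$.

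Next, I would verify the integrability conditions \eqref{eq:order-bounded-equivalent-1} and \eqref{eq:order-bounded-equivalent-2}. Splitting the integral at $u=1$ (say), on $[0,1]$ the tail probabilities are trivially bounded by $1$, so $\int_0^1(\sup_{\mu\in M}\mu((u,\8)))u\,du\leq\frac12$. On $[1,\8)$, the Markov bound gives
\[
\int_1^{\8}\bigg(\sup_{\mu\in M}\mu\big((u,\8)\big)\bigg)u\d u\leq C\int_1^{\8}u^{1-p}\d u<\8,
\]
since $p>2$ implies $1-p<-1$. The same argument applies to the left tail. Thus both \eqref{eq:order-bounded-equivalent-1} and \eqref{eq:order-bounded-equivalent-2} hold, and Proposition \ref{prop:order-bounded-equivalent} produces an upper bound $\overline{\nu}\in\PP_2(\R)$ and a lower bound $\underline{\nu}\in\PP_2(\R)$ for $M$ under the stochastic order. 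Hence $M$ is order bounded in $\PP_2(\R)$.

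There is no real obstacle here; the only subtle point is that the hypothesis $p>2$ is used precisely to make $u^{1-p}$ integrable at infinity, which is exactly the sharp threshold for converting a uniform $p$-moment bound into the Wasserstein-$2$ order-boundedness criterion of Proposition \ref{prop:order-bounded-equivalent}. If one only had $p=2$, the Markov estimate would degenerate to $\int u^{-1}\,du=\8$, so strict inequality $p>2$ is essential.
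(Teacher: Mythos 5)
Your proof is correct and follows essentially the same route as the paper: both split the tail integral at $u=1$, bound the piece on $[0,1]$ trivially by $\tfrac12$, and apply Chebyshev/Markov's inequality $\mu((u,\8))\leq u^{-p}\|\mu\|_p^p$ on $[1,\8)$, where $p>2$ makes $u^{1-p}$ integrable, before invoking Proposition \ref{prop:order-bounded-equivalent}. No substantive difference.
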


\begin{proof}
It is sufficient to check the conditions \eqref{eq:order-bounded-equivalent-1} and \eqref{eq:order-bounded-equivalent-2}.
We first notice
\begin{equation}\label{pf:order-bounded-1}
\int_0^{\8}\bigg(\sup_{\mu\in M}\mu\big((u,\8)\big)\bigg)u\d u
=\int_0^1\bigg(\sup_{\mu\in M}\mu\big((u,\8)\big)\bigg)u\d u
+\int_1^{\8}\bigg(\sup_{\mu\in M}\mu\big((u,\8)\big)\bigg)u\d u.
\end{equation}
Then we estimate these two terms separately. Note that
\begin{equation}\label{pf:order-bounded-2}
\int_0^1\bigg(\sup_{\mu\in M}\mu\big((u,\8)\big)\bigg)u\d u
\leq\int_0^1u\d u=\frac12,
\end{equation}
and by Chebyshev's inequality, for $u\geq1$,
\[
\mu\big((u,\8)\big)\leq\frac{1}{u^p}\int_{\R}\abs{x}^p\d\mu(x).
\]
It follows
\begin{equation}\label{pf:order-bounded-3}
\int_1^{\8}\bigg(\sup_{\mu\in M}\mu\big((u,\8)\big)\bigg)u\d u
\leq\sup_{\mu\in M}\norm{\mu}_p^p\cdot\int_1^{\8}\frac{\d u}{u^{p-1}}<\8.
\end{equation}
Combining \eqref{pf:order-bounded-1}-\eqref{pf:order-bounded-3} verifies \eqref{eq:order-bounded-equivalent-1}. The condition \eqref{eq:order-bounded-equivalent-2} can be verified in a similar way.
\end{proof}

By the equivalent characterization of order bounded sets in $\PP_2(\R)$, we also have the following result.

\begin{proposition}\label{prop:order-bounded-precompact}
If $M$ is an order bounded subset of $\PP_2(\R)$, then $M$ is relatively compact in $\PP_2(\R)$.
In particular, any order interval $[\mu,\nu]$ is compact in $\PP_2(\R)$.
\end{proposition}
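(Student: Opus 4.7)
The plan is to verify the standard compactness criterion in $(\PP_2(\R),\WW_2)$: a subset $M\subset\PP_2(\R)$ is relatively compact if and only if it is tight and uniformly $2$-integrable, i.e., $\lim_{R\to\8}\sup_{\mu\in M}\int_{\{|x|>R\}}|x|^2\d\mu(x)=0$. Since $M$ is order bounded, I would fix $\underline{\mu},\overline{\mu}\in\PP_2(\R)$ with $\underline{\mu}\leq_{\st}\mu\leq_{\st}\overline{\mu}$ for every $\mu\in M$ and invoke Lemma \ref{lem:stochastic-order-distribution} to obtain the pointwise ordering of the cumulative distribution functions $F_{\overline{\mu}}\leq F_\mu\leq F_{\underline{\mu}}$. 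Translated into tail probabilities, this gives, for every $u\in\R$ and every $\mu\in M$, $\mu\big((u,\8)\big)\leq\overline{\mu}\big((u,\8)\big)$ and $\mu\big((-\8,-u]\big)\leq\underline{\mu}\big((-\8,-u]\big)$.

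Tightness then follows immediately, since $\sup_{\mu\in M}\mu(\{|x|>u\})\leq\overline{\mu}\big((u,\8)\big)+\underline{\mu}\big((-\8,-u]\big)\to 0$ as $u\to\8$, using only that $\underline{\mu},\overline{\mu}$ are probability measures. For the uniform $2$-integrability, I would reuse the Tonelli/layer-cake identity already employed in the proof of Proposition \ref{prop:order-bounded-equivalent}: for $R>0$, $\int_{\{x>R\}}x^2\d\mu(x)=R^2\mu\big((R,\8)\big)+2\int_R^\8 u\,\mu\big((u,\8)\big)\d u$, together with its symmetric counterpart on $\{x<-R\}$. Majorizing $\mu\big((u,\8)\big)$ by $\overline{\mu}\big((u,\8)\big)$ and $\mu\big((-\8,-u]\big)$ by $\underline{\mu}\big((-\8,-u]\big)$, the resulting dominating quantities vanish as $R\to\8$ because $\overline{\mu},\underline{\mu}\in\PP_2(\R)$ (which, via the same Tonelli computation, exactly means the integrals $\int_0^\8 u\,\overline{\mu}\big((u,\8)\big)\d u$ and $\int_0^\8 u\,\underline{\mu}\big((-\8,-u]\big)\d u$ are finite). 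Since this bound is uniform in $\mu\in M$, the required uniform $2$-integrability, and hence relative compactness of $M$ in $\PP_2(\R)$, is established.

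For the ``in particular'' part, any order interval $[\mu,\nu]$ is by definition order bounded, so the first assertion yields relative compactness. Moreover, $[\mu,\nu]=\{\rho\in\PP_2(\R):\mu\leq_{\st}\rho\}\cap\{\rho\in\PP_2(\R):\rho\leq_{\st}\nu\}$ is closed in $\PP_2(\R)$ because the stochastic order is a closed partial order relation, as recalled in Section \ref{sec:preliminaries of stochastic-order}. A closed relatively compact subset of a metric space is compact, which finishes the proof. I do not anticipate a real obstacle here: the serious content has already been absorbed into Proposition \ref{prop:order-bounded-equivalent}, and what remains is a direct combination of the $\WW_2$-compactness criterion with the pointwise ordering of distribution functions.
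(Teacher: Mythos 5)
Your proposal is correct and follows essentially the same route as the paper: invoke the Ambrosio--Gigli compactness criterion (tightness plus $2$-uniform integrability), derive the needed tail estimates from the order bounds $\underline{\mu}\leq_{\st}M\leq_{\st}\overline{\mu}$, and close with the observation that order intervals are closed because $\leq_{\st}$ is a closed relation. The only difference is in how you establish $2$-uniform integrability: the paper tests the stochastic order directly against the increasing functions $|x|^2\one_{\{x\geq N\}}$ and $-|x|^2\one_{\{x\leq -N\}}$ (implicitly passing from bounded test functions by truncation), whereas you first translate the order into the pointwise CDF bound of Lemma~\ref{lem:stochastic-order-distribution} and then apply the layer-cake identity; your variant is slightly longer but sidesteps the unboundedness of the paper's test functions, and both are sound.
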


\begin{proof}
According to Ambrosio-Gigli \cite[Theorem 2.7]{Ambrosio-Gigli2013}, it suffices to show that $M$ is tight and $2$-uniformly integrable, i.e.,
\begin{equation}\label{pf:order-bounded-precompact-1}
\lim_{N\to\8}\sup_{\mu\in M}\int_{\{x:|x|\geq N\}}|x|^2\d \mu(x)=0,
\end{equation}
    
Since $\sup_{\mu\in M}\mu\big((u,\8)\big)$ is decreasing in $u$, and by Proposition \ref{prop:order-bounded-equivalent} (i), we have
\begin{equation}\label{pf:order-bounded-precompact-2}
\sup_{\mu\in M}\mu\big((u,\8)\big)\to0\ \text{ as $u\to\8$}.
\end{equation}
Similarly, we observe $\sup_{\mu\in M}\mu\big((-\8,-u]\big)$ is decreasing in $u$, so Proposition \ref{prop:order-bounded-equivalent} (ii) entails
\begin{equation}\label{pf:order-bounded-precompact-3}
\sup_{\mu\in M}\mu\big((-\8,-u]\big)\to0\ \text{ as $u\to\8$}.
\end{equation}
Then \eqref{pf:order-bounded-precompact-2} and \eqref{pf:order-bounded-precompact-3} together imply $M$ is tight.
    
It remains to show that $M$ is 2-uniformly integrable, i.e., \eqref{pf:order-bounded-precompact-1} holds.
Let $\underline{\mu}, \overline{\mu}\in\PP_2(\R)$ be a lower and an upper bounds of $M$ with respect to the stochastic order.
Note that for any $\mu\in M$, by choosing $f^+_N(x)=|x|^2\one_{\{x:x\geq N\}}$ and $f^-_N(x)=-|x|^2\one_{\{x:x\leq -N\}}$ respectively, we deduce
\[
\int_{\{x:x\geq N\}}|x|^2\d \mu(x)\leq \int_{\{x:x\geq N\}}|x|^2\d \overline{\mu}(x), \ \text{ and } \ \int_{\{x:x\leq -N\}}|x|^2\d \mu(x)\leq \int_{\{x:x\leq -N\}}|x|^2\d \underline{\mu}(x).
\]
Thus, for all $\mu\in M$,
\[
\int_{\{x:|x|\geq N\}}|x|^2\d \mu(x)\leq \int_{\{x:x\geq N\}}|x|^2\d \overline{\mu}(x)+\int_{\{x:x\leq -N\}}|x|^2\d \underline{\mu}(x).
\]
Then we have
\[
\lim_{N\to\8}\sup_{\mu\in M}\int_{\{x:|x|\geq N\}}|x|^2\d \mu(x)\leq \lim_{N\to\8}\Bigg(\int_{\{x:x\geq N\}}|x|^2\d \overline{\mu}(x)+\int_{\{x:x\leq -N\}}|x|^2\d \underline{\mu}(x)\Bigg)=0.
\]
Hence, $M$ is $2$-uniformly integrable in $\PP_2(\R)$.

In particular, an order interval $[\mu,\nu]$ is closed due to the closedness of the stochastic order, so it is compact in $\PP_2(\R)$.
\end{proof}

\begin{remark}\label{R:compact not imply bounds}
If $M$ is merely a relatively compact subset of $\PP_2(\R)$, then $M$ may not have upper or lower bounds in $\PP_2(\R)$ with respect to the stochastic order.
In the following, we give an example of a relatively compact subset of $\PP_2(\R)$, but it fails to satisfy the equivalent conditions \eqref{eq:order-bounded-equivalent-1}\eqref{eq:order-bounded-equivalent-2} of order boundedness.
Let $M=\{\mu_n: n\geq2\}\subset\PP_2(\R)$ be given by
\[
\mu_n=\frac1{n^2\log n}\xd_{-n}+\left(1-\frac2{n^2\log n}\right)\xd_0+\frac1{n^2\log n}\xd_n.
\]
Since $\WW_2(\mu_n,\xd_0)=(2/\log n)^{1/2}\to0$ as $n\to\8$, the set $\{\mu_n\}_{n=2}^{\8}$ is relatively compact in $\PP_2(\R)$.
Note that
\begin{align*}
\inf_{n\geq2}F_{\mu_n}(x)=
\left\{
\begin{aligned}
&0, &&x<0,\\
&1-\frac{1}{2^2\log2}, &&0\leq x<2,\\
&1-\frac{1}{k^2\log k}, &&k-1\leq x<k,\ k\geq3.
\end{aligned}
\right.
\end{align*}
It gives
\[
\int_{0}^{\8}\bigg(\sup_{n\geq2}\mu_n\big((u,\8)\big)\bigg)u\d u=\int_0^{\8}\big[1-\inf_{n\geq2}F_{\mu_n}(u)\big]u\d u\geq\sum_{k=2}^{\8}\frac{k}{(k+1)^2\log(k+1)}=\8,
\]
so the condition \eqref{eq:order-bounded-equivalent-1} does not hold, and $M$ has no upper bound in $\PP_2(\R)$ with respect to the stochastic order.
Likewise, we can check $M$ does not satisfy \eqref{eq:order-bounded-equivalent-2}, and hence, it has no lower bound.
\end{remark}

\begin{corollary}
\label{coro:order-interval-metric-equivalence}
Let $[\mu,\nu]$ be an order interval and $\{\rho_n\}_{n\geq1}\subset[\mu,\nu]$, $\rho\in[\mu,\nu]$. Then the following convergences are equivalent.
\begin{enumerate}[label=\textnormal{(\roman*)}]
\item $\WW_2(\rho_n,\rho)\to0$ as $n\to\8$;
\item $\WW_p(\rho_n,\rho)\to0$ as $n\to\8$, for all $1\leq p<2$;
\item $\rho_n\to\rho$ weakly as $n\to\8$.
\end{enumerate}
\end{corollary}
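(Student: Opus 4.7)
The plan is to prove the cyclic chain of implications (i) $\Rightarrow$ (ii) $\Rightarrow$ (iii) $\Rightarrow$ (i). The first two are standard features of Wasserstein convergence that require no order-theoretic input, while the return from weak convergence back to $\WW_2$-convergence is the substantive step and relies crucially on the uniform second-moment control that any order interval enjoys.

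For (i) $\Rightarrow$ (ii), I would apply Jensen's inequality to the optimal couplings $\pi_n$ between $\rho_n$ and $\rho$: for $1\leq p\leq 2$,
\[
\WW_p(\rho_n,\rho)=\bigg(\int|x-y|^p\d\pi_n\bigg)^{1/p}\leq\bigg(\int|x-y|^2\d\pi_n\bigg)^{1/2}=\WW_2(\rho_n,\rho),
\]
so the convergence transmits. For (ii) $\Rightarrow$ (iii), I would invoke the standard fact that $\WW_p$-convergence implies narrow (weak) convergence for every $p\geq 1$ (see e.g.\ Villani \cite{Villani2009}); this follows by testing against bounded Lipschitz functions via the coupling inequality.

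The substantive step is (iii) $\Rightarrow$ (i). The key observation is that the proof of Proposition \ref{prop:order-bounded-precompact} has already established $2$-uniform integrability of any order bounded subset of $\PP_2(\R)$; applied to the order interval $[\mu,\nu]$ this yields
\[
\lim_{N\to\infty}\sup_{\rho\in[\mu,\nu]}\int_{\{|x|\geq N\}}|x|^2\d\rho(x)=0.
\]
Any sequence $\{\rho_n\}\subset[\mu,\nu]$ therefore has uniformly controlled $L^2$-tails. Combining this with the hypothesized weak convergence $\rho_n\to\rho$ and invoking the characterization of $\WW_2$-convergence as weak convergence plus $2$-uniform integrability (Ambrosio-Gigli \cite[Theorem 2.7]{Ambrosio-Gigli2013}, which is the very tool used to prove Proposition \ref{prop:order-bounded-precompact}) upgrades (iii) to (i), closing the cycle.

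I do not anticipate a serious obstacle here, since the heavy lifting has already been carried out within Proposition \ref{prop:order-bounded-precompact}: the corollary is essentially the remark that, once one is confined to an order interval, the obstruction to equivalence of the Wasserstein topologies and the weak topology—namely mass escape to infinity at order-$|x|^2$—has been ruled out. The only minor care required is to note that $[\mu,\nu]$ is closed in $\PP_2(\R)$ by closedness of the stochastic order, so the limit $\rho$ indeed lies in the interval and all three convergence notions refer to the same ambient compact set.
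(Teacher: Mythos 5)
Your proof is correct, and the easy implications \textnormal{(i)}$\Rightarrow$\textnormal{(ii)}$\Rightarrow$\textnormal{(iii)} are handled in essentially the same way the paper does (the paper dismisses them as obvious). For the substantive implication \textnormal{(iii)}$\Rightarrow$\textnormal{(i)}, however, you take a genuinely different route. The paper invokes the \emph{compactness} of $[\mu,\nu]$ in $\PP_2(\R)$ (the conclusion of Proposition \ref{prop:order-bounded-precompact}) and runs a standard subsequence argument: every subsequence of $\{\rho_n\}$ has a $\WW_2$-convergent sub-subsequence, whose limit must equal $\rho$ by uniqueness of the weak limit; hence the full sequence converges in $\WW_2$. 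You instead reach inside the \emph{proof} of Proposition \ref{prop:order-bounded-precompact}, extract the $2$-uniform integrability of the order interval, and apply directly the characterization that weak convergence plus uniform integrability of second moments yields $\WW_2$-convergence. Both arguments are sound and lean on the same underlying moment control; the paper's version packages it as a compactness-plus-subsequence argument, which keeps the corollary self-contained relative to the stated conclusion of Proposition \ref{prop:order-bounded-precompact} rather than its internal workings. One minor quibble: Ambrosio-Gigli \cite[Theorem 2.7]{Ambrosio-Gigli2013} is a \emph{compactness} criterion, not the statement that weak convergence plus $2$-uniform integrability implies $\WW_2$-convergence; the latter is better attributed to Villani \cite[Theorem 6.9]{Villani2009}, which the paper itself uses for exactly this purpose elsewhere (e.g.\ in the proof of Lemma \ref{lem:comparison-invariant-measure}).
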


\begin{proof}
It is obvious that (i)$\Rightarrow$(ii)$\Rightarrow$(iii), and we only need to show (iii)$\Rightarrow$(i).

By Proposition \ref{prop:order-bounded-precompact}, we have $[\mu,\nu]$ is compact in $\PP_2(\R^d)$. Then for any subsequence $\{\rho_{n_k}\}_{k\geq 1}\subset \{\rho_n\}_{n\geq 1}\subset [\mu,\nu]$, there exists a subsequence $\{\rho_{n_{k_j}}\}_{j\geq 1}$ of  $\{\rho_{n_k}\}_{k\geq 1}$ and a $\rho_0\in \PP_2(\R^d)$ such that $\WW_2(\rho_{n_{k_j}},\rho_0)\to0$ as $j\to\8$. Thus, $\rho_{n_{k_j}}\to\rho_0$ weakly as $j\to\8$. By (iii) and the uniqueness of the weak limit, we conclude $\rho_0=\rho$. Therefore, $\WW_2(\rho_n,\rho)\to0$ as $n\to\8$.
\end{proof}

\section{Global Convergence to Order Interval}\label{sec:global-convergence}
This section aims to prove Theorem \ref{thm:global-convergence} and Corollary \ref{coro:unique-convergence}.
In Section \ref{subsec:global attractor}, we target on the structure of global attractors of general semigroups on $\PP_2(\R)$ that preserve the stochastic order  (namely, order-preserving semigroups).
The coming down from infinity property of hyper-dissipative McKean-Vlasov SDEs is shown in Section \ref{subsec:coming-down-from-infinity}.
At last, the proof of Theorem \ref{thm:global-convergence} and Corollary \ref{coro:unique-convergence} is given in Section \ref{subsec:proof-global-convergence}.

\subsection{Global attractors of order-preserving semigroups on \texorpdfstring{$\PP_2(\R)$}{P2(R)}}
\label{subsec:global attractor}
The notion of order-preserving  has already appeared in the study of uniqueness of invariant measures, ergodicity, and synchronization phenomena for certain classes of Markov processes; see, for instance, \cite{Butkovsky2020,RobertRichard1996,RobertTweedie2000}. 
In this subsection, we instead utilize concepts from the theory of dissipative dynamical systems (see, e.g., Hale \cite{Ha88}) to study the asymptotic behaviour of a general semigroup on $\PP_2(\R)$ that preserves the stochastic order, allowing in particular for the presence of multiple invariant measures.
Based on the existence of a globally attracting order bounded set, we prove that a global attractor of an order-preserving semigroup must be encompassed by two order related invariant measures, and these two invariant measures attract all solutions initiated from the regions above and below them with respect to the stochastic order, respectively (see Proposition \ref{T:attractor}). 

Firstly, we give the definition of a semiflow on $\PP_2(\R)$.
\begin{definition}\label{D:semiflow}
A map $P^*: 
\mathbb{R_+}\times\PP_2(\R)\rightarrow\PP_2(\R)$ is said to be a jointly continuous semigroup, or a semiflow on $\PP_2(\R)$, if 
\begin{enumerate}[label=(\roman*)]
    \item  $P^*: \mathbb{R_+}\times\PP_2(\R)\rightarrow\PP_2(\R)$ is continuous;
    \item $P_0^*\mu=\mu$  for all $\mu\in\PP_2(\R)$;
    \item $P_t^*\circ P_s^*=P_{t+s}^*$ for all $t,s\in \mathbb{R_+}$.
\end{enumerate}
\end{definition}
Next, we introduce some necessary notations and definitions originated from the standard theory of dissipative dynamical systems.
For any $\mu\in\PP_2(\R)$, the \emph{positive orbit} $O_+(\mu)$ of $\mu$ is $\{P^*_t\mu: t\geq0\}$. 
A measure $\mu\in\PP_2(\R)$ is an \emph{invariant measure} of $P^*$, if $P^*_t\mu=\mu$ for any $t\geq0$. 
A subset $A\subset\PP_2(\R)$ is said to be \emph{positively invariant}, if $P^*_tA\subset A$, for any $t\geq 0$, and  \emph{invariant}, if $P^*_tA=A$, for any $t\geq 0$. 
$A\subset\PP_2(\R)$ is said to be \emph{eventually positively invariant}, if there exists $t_0\geq0$ such that, $P^*_tA\subset A$, for any $t\geq t_0$.
A compact invariant set $A\subset\PP_2(\R)$ is said to be the \emph{maximal} compact invariant set if every compact invariant set is contained in $A$. 
\begin{definition}\label{def:attract-set}
A set $K\subset\PP_2(\R)$ is said to attract a set $B\subset\PP_2(\R)$ if $\WW_2(P^*_tB,K)\to0$, as $t\to\infty$.
\end{definition}
Obviously, if a measure $\mu\in\PP_2(\R)$ is attracted by a compact set $K\subset\PP_2(\R)$, then the positive orbit $O_+(\mu)$ of $\mu$ is relatively compact in $\PP_2(\R)$.
\begin{definition}\label{D:global attractor}
A compact invariant set $A\subset\PP_2(\R)$ is said to be the global attractor, if $A$ attracts each bounded subset of $\PP_2(\R)$. 
\end{definition}
Clearly, if the global attractor exists, it must be unique, maximal and all the invariant measures are contained in it.
\begin{definition}\label{D:omega limit set}
For any set $B\subset\PP_2(\R)$, the $\omega$-limit set of $B$ is defined by
$$\omega(B):=\mathop{\bigcap}\limits_{s\geq0}\overline{\bigcup\limits_{t\geq s}P^*_tB},$$
where $\overline{D}$ means the closure of $D$ in $\PP_2(\R)$.
\end{definition}
Now, we give some basic properties of $\omega$-limit sets and dissipative dynamics.

\begin{lemma}\label{L:omega-limit-set}
Given a semiflow $P^*$ on $\PP_2(\R)$, then
\begin{enumerate}[label=\textnormal{(\roman*)}]
    \item a measure $\nu\in\omega(B)$ if and only if there exist two sequences $t_k\to\infty$, and $\nu_k\in B$ such that $\WW_2(P^*_{t_k}\nu_k,\nu)\to0$, as $k\to\infty$;
    \item if $B$ is connected in  $\PP_2(\R)$, then $\omega(B)$ is connected in $\PP_2(\R)$;
    \item if there exists a compact set $K\subset\PP_2(\R)$ attracting a bounded set $B\subset\PP_2(\R)$, then $\omega(B)$ is a nonempty, compact, invariant subset of $K$ and $\omega(B)$ attracts $B$;
    \item if there exists a nonempty compact set $K\subset\PP_2(\R)$ that attracts each compact subset of $\PP_2(\R)$, then $\mathop{\bigcap}\limits_{t\geq0}P^*_tK$ is the maximal compact invariant set.
\end{enumerate}
\end{lemma}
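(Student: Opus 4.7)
The plan is to unpack the definition $\omega(B)=\bigcap_{s\geq0}\overline{\bigcup_{t\geq s}P^*_tB}$ and combine it with the joint continuity of the semiflow and the compactness supplied by the hypotheses. For part (i), I simply translate the definition: if $\nu\in\omega(B)$, then for each $n\in\N$ we have $\nu\in\overline{\bigcup_{t\geq n}P^*_tB}$, so one can pick $t_n\geq n$ and $\nu_n\in B$ with $\WW_2(P^*_{t_n}\nu_n,\nu)<1/n$; conversely, given sequences $t_k\to\8$ and $\nu_k\in B$ with $P^*_{t_k}\nu_k\to\nu$, for any $s\geq 0$ eventually $t_k\geq s$, so $P^*_{t_k}\nu_k\in\bigcup_{t\geq s}P^*_tB$, and $\nu$ therefore lies in each closure.

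For part (iii), I would prove nonemptyness, containment, compactness, invariance, and attraction in turn. Fix any $\mu\in B$ and $t_k\to\8$; since $K$ attracts $B$, $\WW_2(P^*_{t_k}\mu,K)\to 0$, and compactness of $K$ yields a convergent subsequence whose limit lies in $\omega(B)$ by (i), giving nonemptyness. The same estimate together with the closedness of $K$ gives $\omega(B)\subset K$, hence compactness. For invariance, $P^*_t\omega(B)\subset\omega(B)$ follows from continuity of $P^*_t$ applied to a characterizing sequence; the reverse inclusion uses compactness of $K$ to extract a convergent subsequence from the shifted sequence $\{P^*_{t_k-t}\nu_k\}$ (which is well-defined once $t_k\geq t$) and then exploits the semigroup property. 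Attraction is by contradiction: if $\WW_2(P^*_{t_k}\nu_k,\omega(B))\geq\xe$ for some $t_k\to\8$ and $\nu_k\in B$, compactness of $K$ extracts a subsequential limit that by (i) belongs to $\omega(B)$, a contradiction.

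For part (ii), the idea is that for each $s\geq 0$ the set $\bigcup_{t\geq s}P^*_tB$ is the image of the connected product $[s,\8)\times B$ under the jointly continuous map $(t,\mu)\mapsto P^*_t\mu$, hence connected, and its closure is connected as well. Thus $\omega(B)$ is a nested intersection of closed connected sets; combined with the compactness from (iii) that places $\omega(B)$ inside a compact $K$, the general fact that a decreasing intersection of compact connected sets is connected yields the claim.

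For part (iv), set $A=\bigcap_{t\geq 0}P^*_tK$. Each $P^*_tK$ is the continuous image of a compact set, hence compact and closed, so $A$ is closed and $A\subset P^*_0K=K$ gives compactness. The forward invariance $P^*_sA\subset A$ is bookkeeping: $P^*_sA\subset\bigcap_{t\geq 0}P^*_{s+t}K\subset A$. The reverse inclusion $A\subset P^*_sA$ is the main subtlety: given $\nu\in A$, for each $n\in\N$ write $\nu=P^*_{s+n}\mu_n$ with $\mu_n\in K$ and set $\eta_n:=P^*_n\mu_n\in P^*_nK$, so $P^*_s\eta_n=\nu$. Since $K$ attracts itself, $\WW_2(\eta_n,K)\to 0$, and compactness of $K$ supplies a subsequence $\eta_{n_k}\to\eta$; for each fixed $t\geq 0$, $\eta_{n_k}\in P^*_tK$ once $n_k\geq t$, so closedness gives $\eta\in P^*_tK$, whence $\eta\in A$, and continuity yields $P^*_s\eta=\nu$. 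Maximality is clean: any compact invariant $C$ is attracted by $K$, but $P^*_tC=C$ forces $C\subset K$, and then $C=P^*_tC\subset P^*_tK$ for every $t\geq 0$ gives $C\subset A$. The main obstacle will be this backward invariance step, since one must use the self-attraction of $K$ to precompactify the preimage sequence $\{\eta_n\}$ before a diagonal extraction can place the limit in every $P^*_tK$; part (ii) is similarly delicate without the tacit compactness supplied by (iii), as a decreasing intersection of closed connected sets can fail to be connected in general metric spaces.
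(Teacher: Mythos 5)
The paper's own proof of this lemma is a citation: part (i) is declared immediate from the definition and parts (ii)--(iv) are referred to Hale's book (Lemma 3.1.1, Lemma 3.2.1, Theorem 3.4.2). Your attempt to supply the details from scratch is a legitimate alternative, and your arguments for (i) and (iii) are correct. However, parts (ii) and (iv) contain genuine gaps.

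For (ii), the issue you flag yourself at the end is not actually resolved by the compactness of $\omega(B)$. Your argument treats $\omega(B)$ as the nested intersection of the closed connected sets $C_s:=\overline{\bigcup_{t\geq s}P^*_tB}$, but these $C_s$ are \emph{not} compact, and placing only the final intersection inside a compact $K$ does not rescue the ``nested intersection of compacta is connected'' lemma. One would need the $C_s$ themselves to be compact for $s$ large, and in $\PP_2(\R)$ a closed $\varepsilon$-neighbourhood of a compact set is not compact (closed bounded sets are not compact in this space), so one cannot get this from the attraction to $K$. The correct route is Hale's argument through attraction by $\omega(B)$: if $\omega(B)$ split into disjoint nonempty compact pieces $A_1,A_2$, then with disjoint open $\delta$-neighbourhoods $N_1,N_2$, for $t$ large $P^*_tB\subset N_1\cup N_2$ by attraction, while $P^*_tB$ is connected and must meet both pieces for arbitrarily large $t$ (since both $A_i\subset\omega(B)$), yielding a contradiction.

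For (iv), two steps fail. The forward-invariance chain ``$P^*_sA\subset\bigcap_{t\geq0}P^*_{s+t}K\subset A$'' has the second inclusion backwards: $\bigcap_{t\geq0}P^*_{s+t}K=\bigcap_{u\geq s}P^*_uK$, which \emph{contains} $A=\bigcap_{u\geq0}P^*_uK$ rather than being contained in it. The backward-invariance step then uses ``$\eta_{n_k}\in P^*_tK$ once $n_k\geq t$,'' which requires $P^*_{n_k}K\subset P^*_tK$, i.e.\ $P^*_{n_k-t}K\subset K$ --- positive invariance of $K$, which the hypothesis of (iv) does not grant. A clean and correct route is to identify $A:=\bigcap_{t\geq0}P^*_tK$ with $\omega(K)$: since $K$ is compact (hence bounded) and attracts itself, (iii) applies with $B=K$, giving that $\omega(K)$ is nonempty, compact, invariant, and $\omega(K)\subset K$; invariance then gives $\omega(K)=P^*_t\omega(K)\subset P^*_tK$ for all $t$, so $\omega(K)\subset A$. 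Conversely, for $\nu\in A$ pick $\mu_n\in K$ with $P^*_n\mu_n=\nu$ for every $n\in\N$ and apply (i) to conclude $\nu\in\omega(K)$. Hence $A=\omega(K)$, which has all the required properties, and maximality is as you stated: a compact invariant $C$ satisfies $C=\omega(C)\subset K$ by (iii), so $C=P^*_tC\subset P^*_tK$ for all $t$.
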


\begin{proof}
(i) is a direct consequence of the definition of $\xo$-limit sets. (ii)-(iv) are standard results for dissipative systems on  complete metric spaces, for their proof one can see Hale \cite[Lemma 3.1.1, Lemma 3.2.1 and Theorem 3.4.2 (ii)]{Ha88} respectively.
\end{proof}

Clearly, Lemma \ref{L:omega-limit-set} (iii) implies that, if the global attractor $A$ exists, then $\omega(B)\subset A$ for any bounded set $B\subset\PP_2(\R)$.
In the following, we give the definition of \emph{order-preserving} semiflows. 
\begin{definition}\label{D:order-preserving}
A semiflow $P^*$ on $\PP_2(\R)$ is called order-preserving (or, monotone), if $\mu\leq_{\st}\nu$ implies $P^*_t\mu\leq_{\st} P^*_t\nu$ for any $t>0$. 
\end{definition}

Now, we give our main theorem in this subsection concerning the structure of the global attractors of order-preserving semigroups on $\PP_2(\R)$.
\begin{proposition}\label{T:attractor}
{\rm (Global attractor).} Let $P^*$ be an order-preserving semiflow on $\PP_2(\R)$. Assume that there exists an eventually positively invariant, order bounded, closed set $K\subset\PP_2(\R)$ such that $K$ attracts each bounded subset of $\PP_2(\R)$. Then,  
\begin{enumerate}[label=\textnormal{(\roman*)}]
    \item $\omega(K)=\mathop{\bigcap}\limits_{t\geq0}P^*_tK$ is the global attractor. Furthermore, if $K$ is connected in $\PP_2(\R)$, $\omega(K)$ is connected in $\PP_2(\R)$;
\end{enumerate}
\begin{enumerate}[resume,label=\textnormal{(\roman*)}]
    \item $\omega(K)\subset[\underline{\nu},\overline{\nu}]$, where $\underline{\nu}, \overline{\nu}\in\omega(K)$ are invariant measures;
    \item $\underline{\nu}$ is globally attracting from below and $\overline{\nu}$ is globally attracting from above i.e., for any $\rho\leq_{\st}\underline{\nu}$, we have $\WW_2(P^*_t\rho,\underline{\nu})\to0$, as $t\to\infty$; for any $\rho\geq_{\st}\overline{\nu}$, we have $\WW_2(P^*_t\rho,\overline{\nu})\to0$, as $t\to\infty$;
    \item If $\underline{\nu}$ attracts a set $U\subset\PP_2(\R)$, then $\underline{\nu}$ attracts each measure in $\bigcup\limits_{\mu\in U}\{\rho\in\PP_2(\R): \rho\leq_{\st} \mu\}$. Correspondingly, if $\overline{\nu}$ attracts a set $V\subset\PP_2(\R)$, then $\overline{\nu}$ attracts each measure in $\bigcup\limits_{\mu\in V}\{\rho\in\PP_2(\R): \rho\geq_{\st}\mu\}$;
    \item If there exists $\rho_1<_{\st}\overline{\nu}$ (resp. $\rho_2>_{\st}\underline{\nu}$) such that $\WW_2(P^*_t\rho_1,\overline{\nu})\to0$ (resp. $\WW_2(P^*_t\rho_2,\underline{\nu})\to0$) as $t\to\infty$, then for any $\rho\geq_{\st} \rho_1$ (resp. $\rho\leq_{\st}\rho_2$), one has $\WW_2(P^*_t\rho,\overline{\nu})\to0$ (resp. $\WW_2(P^*_t\rho,\underline{\nu})\to0$), as $t\to\infty$. 
    \item If $P^*$ has a unique invariant measure $\nu$, then the singleton set $\{\nu\}$ is the global attractor. That is to say, $\nu$ attracts all the bounded subsets of $\PP_2(\R)$.
\end{enumerate}
\end{proposition}

\begin{proof}
(i). By Proposition \ref{prop:order-bounded-precompact}, the order bounded, closed set $K$ is compact in $\PP_2(\R)$. Since $K$ is eventually positively invariant, by definition of $\omega(K)$ we have $\omega(K)\subset K$. 
Since $K$ attracts $K$, Lemma \ref{L:omega-limit-set} (iii) implies that $\omega(K)$ is invariant. 
Then $\omega(K)=P^*_t\omega(K)\subset P^*_tK$ for any $t\geq0$. 
Hence $\omega(K)\subset\mathop{\bigcap}\limits_{t\geq0}P^*_tK.$ 
On the other hand, the definition of $\omega(K)$ implies that $\mathop{\bigcap}\limits_{t\geq0}P^*_tK\subset\omega(K).$ 
Hence, $\omega(K)=\mathop{\bigcap}\limits_{t\geq0}P^*_tK.$
By virtue of the fact that $K$ attracts every bounded subset of $\PP_2(\R)$, it follows from Lemma \ref{L:omega-limit-set} (iv) that $\omega(K)$ is the maximal compact invariant set. 
To prove $\omega(K)$ is the global attractor, we only need to show that $\omega(K)$ attracts every bounded subset $B$ of $\PP_2(\R)$. 
In fact, by Lemma \ref{L:omega-limit-set} (iii), $\omega(B)$ is an invariant subset of $K$ and $\omega(B)$ attracts $B$. 
Since $\omega(B)$ is an invariant subset of $K$, one has $\omega(B)\subset\omega(K)$.  
Therefore, $\omega(K)$ also attracts $B$. 
The connectedness of $\omega(K)$ follows directly from the connectedness of $K$ and Lemma \ref{L:omega-limit-set} (ii). 
This proves (i).

(ii). Since $K$ is order-bounded, assume that there exists $\mu_1$, $\mu_2\in\PP_2(\R)$ such that $\mu_1\leq_{\st} K$ and $\mu_2\geq_{\st} K$. We only prove that $\sup_{\leq_{\st}}\omega(K)$ exists, $\sup_{\leq_{\st}}\omega(K)\in\omega(K)$ and $\sup_{\leq_{\st}}\omega(K)$ is an invariant measure.
The proof for $\inf_{\leq_{\st}}\omega(K)$ is similar, we omit it here. 
Since $\mu_2\geq_{\st} K$ and $\omega(K)\subset K$, one has $\mu_2\geq_{\st}\omega(K)$. As is shown in (i), $\omega(K)$ is the global attractor. 
Thus, $\WW_2(P^*_t\mu_2,\omega(K))\to0$ as $t\to\infty$. Since $\omega(K)$ is compact, one can choose $t_k\to\infty$ and $\overline{\nu}\in\omega(K)$, such that $\WW_2(P^*_{t_k}\mu_2,\overline{\nu})\to0$ as $k\to\infty$. 
For any $k\geq1$, the order-preservation implies $P^*_{t_k}\mu_2\geq_{\st} P^*_{t_k}\omega(K)=\omega(K)$. Therefore, the closedness of the stochastic order entails that $\overline{\nu}\geq_{\st}\omega(K)$. 
Together with the fact $\overline{\nu}\in\omega(K)$, we have $\overline{\nu}=\sup_{\leq_{\st}}\omega(K)$. Fix any $t\geq0$. On the one hand, $P^*_t\overline{\nu}\geq_{\st} P^*_t\omega(K)=\omega(K)$ and $\overline{\nu}\in\omega(K)$ entail that $P^*_t\overline{\nu}\geq_{\st}\overline{\nu}$. 
On the other hand, $P^*_t\overline{\nu}\in P^*_t\omega(K)=\omega(K)$ and $\overline{\nu}=\sup_{\leq_{\st}}\omega(K)$ imply that $P^*_t\overline{\nu}\leq_{\st}\overline{\nu}$. 
Thus, by the antisymmetry of $\leq_{\st}$, $P^*_t\overline{\nu}=\overline{\nu}$, for any $t\geq 0$. That is to say, $\overline{\nu}$ is an invariant measure. Hence, we have proved (ii).

(iii). We only prove that $\overline{\nu}$ is globally attracting from above i.e., for any  $\rho\geq_{\st}\overline{\nu}$, we have $\WW_2(P^*_t\rho,\overline{\nu})\to0$, as $t\to\infty$. 
The proof for $\underline{\nu}$ is similar, we omit it here. Note that $\{P_t^*\rho\}_{t\geq0}$ is relatively compact in $\PP_2(\R)$.
For any $\pi\in\PP_2(\R)$ such that, $\WW_2(P^*_{t_k}\rho,\pi)\to0$ as $k\to\infty$ for some sequence $t_k\to\infty$, we only need to prove $\pi=\overline{\nu}$. 
In fact, since the global attractor $\omega(K)$ attracts $\rho$, one has $\pi\in\omega(K)$. 
For any $k\geq1$, we have $P^*_{t_k}\rho\geq_{\st} P^*_{t_k}\overline{\nu}=\overline{\nu}$. 
Therefore, the closedness of the stochastic order entails that $\pi\geq_{\st}\overline{\nu}$. 
Together with the facts $\pi\in\omega(K)$ and $\overline{\nu}=\sup_{\leq_{\st}}\omega(K)$, we have $\pi=\overline{\nu}$.
This proves (iii). 

(iv). We only give the proof of the case of $\overline{\nu}$ (the case of $\underline{\nu}$ is similar). In fact, if $\rho\geq_{\st}\mu$ for some $\mu\in V$, by Lemma \ref{L:omega-limit-set} (iii), it suffice to prove that $\omega(\rho)=\{\overline{\nu}\}$. 
Let $\pi\in\omega(\rho)$, Lemma \ref{L:omega-limit-set} (i) entails that there exists a sequence $t_k\to\infty$ such that $\WW_2(P^*_{t_k}\rho,\pi)\to0$, as $k\to\infty$. 
Since $P^*$ is order-preserving and $\rho\geq_{\st}\mu$, we have $P^*_{t_k}\rho\geq_{\st} P^*_{t_k}\mu$ for any $k\geq 1$. 
Together with the closedness of the stochastic order, $\WW_2(P^*_{t_k}\mu,\overline{\nu})\to0$ as $k\to\infty$, one has  $\pi\geq_{\st}\overline{\nu}$. 
Since the global attractor $\omega(K)\subset[\underline{\nu},\overline{\nu}]$, one has $\pi\in\omega(\rho)\subset\omega(K)\subset[\underline{\nu},\overline{\nu}]$.
Therefore, $\pi=\overline{\nu}$.
This proves (iv).

(v). We only give the proof of the case of $\overline{\nu}$ (the case of $\underline{\nu}$ is similar). 
In fact, for any $\rho\geq_{\st}\rho_1$, define $\pi:=\sup_{\leq_{\st}}\{\rho,\overline{\nu}\}\in\PP_2(\R)$ (the existence of $\pi$ follows from Remark \ref{rem:lattice}). 
(iii) entails that $\WW_2(P^*_t\pi,\overline{\nu})\to0$, as $t\to\infty$. 
On the other hand, we also have $\WW_2(P^*_t\rho_1,\overline{\nu})\to0$ as $t\to\infty$. 
Noticing that $\rho\in[\rho_1,\pi]$, the order-preserving property of $P^*$ entails that $P^*_{t}\rho_1\leq_{\st} P^*_{t}\rho\leq_{\st} P^*_{t}\pi$ for any $t\geq0$.
Thus, the relative compactness of $\{P_t^*\rho\}_{t\geq0}$, the closedness and antisymmetry of the stochastic order entail that $\WW_2(P^*_t\rho,\overline{\nu})\to0$, as $t\to\infty$. Now, we obtain (v). 

(vi). By virtue of (ii), one has $\nu\leq_{\st}\omega(K)\leq_{\st} \nu$. Thus, the antisymmetry of the stochastic order entails that $\omega(K)=\{\nu\}$. Hence, $\{\nu\}$ is the global attractor. This proves (vi).
\end{proof}

\begin{remark}\label{R:attractor-change space-history}
(i). If we consider a semiflow preserving the stochastic order acting only on a closed subspace $B$ of $\PP_2(\R)$ (rather than the entire space $\PP_2(\R)$), by repeating the same proof, Proposition  \ref{T:attractor} still holds, where the order interval within $B$ is the restriction of the order interval to $B$.
In fact, for semiflow preserving some closed partial order relation on a general complete metric space, by assuming the atttracting set $K$ is compact and order bounded, Proposition \ref{T:attractor} still holds. 

(ii). 
For semiflows preserving some closed partial order relation on a strongly ordered space, Hirsch \cite[Theorem 3.3]{Hirsch84} implies the unique equilibrium must be globally attracting (Proposition \ref{T:attractor} (vi)). However,  Proposition \ref{prop:no-strong-order} shows that the strong ordering relation generated by $\leq_{\st}$ is empty. So, \cite[Theorem 3.3]{Hirsch84} is not applicable here.
\end{remark}

\subsection{Coming down from infinity}
\label{subsec:coming-down-from-infinity}
In this subsection, we prove hyper-dissipative McKean-Vlasov SDEs have the coming down from infinity property.
It means, there exists a compact subset of $\PP_2(\R)$ attracting all bounded sets.
As a byproduct, we show $P_t^*:\PP_2(\R)\to\PP_2(\R)$ generated by the equation \eqref{eq:mvsystem} is a compact map.

Now let us go back to the McKean-Vlasov equation \eqref{eq:mvsystem}.
The driving noise $\{B_t\}_{t\in\R}$ is a two-sided one-dimensional standard Brownian motion defined on a probability space $(\xO,\FF,\bP)$, and let $\{\FF_t\}_{t\in\R}$ be the natural filtration generated by $B$, that is, $\FF_t=\xs\{B_u-B_v:u,v\leq t\}$.
Under Assumption \ref{asp:potential}, for any $s\in \R$ and $\xi\in L^2(\xO,\FF_s,\bP)$, it is well-known that \eqref{eq:mvsystem} has a unique solution $\{X_t^{s,\xi}\}_{t\geq s}$ starting at time $s$ with initial data $\xi$, i.e., $\{X_t^{s,\xi}\}_{t\geq s}$ solves the following equation (see e.g., Wang \cite{Wang18})
\begin{equation*}
\d X_t=-V'(X_t)\d t-(W'*\LL(X_t))(X_t)\d t+\xs(X_t)\d B_t, \ \ X_s=\xi, \ \ t\geq s.
\end{equation*}
When the starting time $s=0$, we denote the solution $\{X_t^{0,\xi}\}_{t\geq 0}$ by $\{X_t^{\xi}\}_{t\geq 0}$.

Suppose Assumption \ref{asp:potential} (ii)-(iv) hold. Then for any $x\in \R$, $\mu\in \PP_2(\R)$, we have
\begin{equation}\label{eq:0802-1}
    \begin{split}
        -xV'(x)-x(W'*\mu)(x)&=-xV'(x)-\theta x^2+\theta x \int_{\R}y\d \mu(y)\\
        &\leq -\alpha |x|^{2+\delta}-\theta|x|^2+\theta|x|\|\mu\|_2+L|x|+\beta.
    \end{split}
\end{equation}
The following lemma provides $p$-th moment estimation of the solution $\{X_t^{\xi}\}_{t\geq 0}$.

\begin{lemma}\label{lem:p-moment estimate}
    Suppose Assumption \ref{asp:potential} holds. For any $p\geq 2$ and $\xi\in L^p(\xO,\FF_0,\bP)$, there exists $C(p)=C(p,\alpha,\beta,\delta,\overline{\xs}, L)>0$ such that the unique solution $\{X_t^{\xi}\}_{t\geq 0}$ of \eqref{eq:mvsystem} satisfies
    \begin{equation}\label{eq:p-moment estimate}
        \mathbf{E}[|X_t^{\xi}|^p]\leq e^{-pt}\mathbf{E}[|\xi|^p]+C(p), \ \text{ for all } \ t\geq 0.
    \end{equation}
\end{lemma}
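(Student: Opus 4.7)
The plan is to apply Itô's formula to $|X_t^\xi|^p$, take expectation and invoke the dissipativity bound \eqref{eq:0802-1} to derive an ODE inequality for $u(t) := \mathbf{E}[|X_t^\xi|^p]$ of the form $u'(t) \leq -p\, u(t) + pC(p)$; Gronwall's inequality then yields \eqref{eq:p-moment estimate}. For $p \geq 2$ the map $x \mapsto |x|^p$ is $C^2$ with $f''(x) = p(p-1)|x|^{p-2}$, so after a standard localization through stopping times $\xt_n = \inf\{t: |X_t^\xi| \geq n\}$ that removes the local martingale contribution, I obtain
\[
\frac{\d}{\d t}u(t) = p\,\mathbf{E}\bigl[|X_t|^{p-2}X_t\bigl(-V'(X_t) - (W'*\mu_t)(X_t)\bigr)\bigr] + \frac{p(p-1)}{2}\,\mathbf{E}\bigl[|X_t|^{p-2}\xs^2(X_t)\bigr],
\]
where $\mu_t = \LL(X_t^\xi)$. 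Multiplying \eqref{eq:0802-1} by $|x|^{p-2}$ and using $\xs^2(x) \leq \overline{\xs}^2$, taking expectations gives
\[
u'(t) \leq -p\xa\,\mathbf{E}[|X_t|^{p+\xd}] - p\theta\, u(t) + p\theta\,\|\mu_t\|_2\,\mathbf{E}[|X_t|^{p-1}] + pL\,\mathbf{E}[|X_t|^{p-1}] + C_0\, \mathbf{E}[|X_t|^{p-2}],
\]
with $C_0 := p\xb + \tfrac{p(p-1)\overline{\xs}^2}{2}$.

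Next I would apply Jensen's inequality (valid since $p \geq 2$) to obtain $\mathbf{E}[|X_t|^{p+\xd}] \geq u(t)^{1+\xd/p}$, $\mathbf{E}[|X_t|^q] \leq u(t)^{q/p}$ for $q = p-1, p-2$, and $\|\mu_t\|_2 = \mathbf{E}[|X_t|^2]^{1/2} \leq u(t)^{1/p}$. The cross term therefore satisfies $\|\mu_t\|_2\,\mathbf{E}[|X_t|^{p-1}] \leq u(t)$, so it exactly cancels the $-p\theta\, u(t)$ contribution, leaving
\[
u'(t) \leq -p\xa\, u(t)^{1+\xd/p} + pL\, u(t)^{(p-1)/p} + C_0\, u(t)^{(p-2)/p}.
\]
Since $1 + \xd/p$ strictly exceeds $(p-1)/p$, $(p-2)/p$ and $1$, the function $g(u) := -p\xa\, u^{1+\xd/p} + pL\, u^{(p-1)/p} + C_0\, u^{(p-2)/p} + p u$ is continuous on $[0,\infty)$ and tends to $-\infty$ as $u \to \infty$; its finite maximum, which I denote $pC(p)$, depends only on $p, \xa, \xb, \xd, \overline{\xs}, L$. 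This yields $u'(t) \leq -p\, u(t) + pC(p)$ and Gronwall completes the proof.

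The main obstacle is producing exactly the rate $-p$ in front of $u(t)$ rather than some weaker exponent. The decisive algebraic step is the cancellation between the convolution-induced cross term $\theta\,\|\mu_t\|_2\,\mathbf{E}[|X_t|^{p-1}]$ and the quadratic dissipation $-\theta x^2$ contributed by the interaction $W(x) = \theta x^2/2$ in \eqref{eq:0802-1}; this cancellation is tight and is what makes the constant $C(p)$ independent of $\theta$. Once this cancellation is executed, the hyper-dissipativity exponent $\xd > 0$ from Assumption \ref{asp:potential} (iii) supplies the super-linear reserve needed to absorb every remaining positive sub-linear term in $u(t)$ and preserve the full decay rate $e^{-pt}$. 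A minor technical point is the justification of Itô's formula at the origin when $p > 2$; although $|\cdot|^p$ is $C^2$ there, if desired one may regularize via $(\xe + |X_t|^2)^{p/2}$ and pass $\xe \down 0$ by monotone convergence.
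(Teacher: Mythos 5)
Your proof is correct and follows essentially the same strategy as the paper: It\^o's formula, exact cancellation of the $\theta$-interaction contribution, absorption of the remaining sub-linear terms via the hyper-dissipativity, and a Gronwall argument yielding $u'(t)\le -p\,u(t)+pC(p)$. The only cosmetic difference is that you carry out the cancellation and the absorption directly at the level of $u(t)=\mathbf E[|X_t^\xi|^p]$ using power-mean/Jensen inequalities, whereas the paper first performs a pathwise Young's inequality (treating $p=2$ and $p>2$ separately) and only then takes expectations; both routes lead to the same differential inequality.
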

\begin{proof}
    Applying It{\^o}'s formula to $|X_t^{\xi}|^2$ together with \eqref{eq:0802-1}, we have
    \begin{equation}\label{eq:0802-2}
        \begin{split}
            \d |X_t^{\xi}|^2&=\big(-2X_t^{\xi}V'(X_t^{\xi})-2\theta|X_t^{\xi}|^2 +2\theta X_t^{\xi}\mathbf{E}[X_t^{\xi}]+\xs^2(X_t^{\xi})\big)\d t+2X_t^{\xi}\sigma(X_t^{\xi})\d B_t\\
            &\leq \big(-2\alpha |X_t^{\xi}|^{2+\delta}-2\theta|X_t^{\xi}|^2 +2\theta |X_t^{\xi}|(\mathbf{E}[|X_t^{\xi}|^2])^{\frac{1}{2}}+2L|X_t^{\xi}|+2\beta+\overline{\xs}^2\big)\d t\\
            &\ \ \ \  \qquad\qquad\qquad\qquad  \qquad\qquad\qquad\qquad  \qquad\qquad\qquad\qquad  +2X_t^{\xi}\sigma(X_t^{\xi})\d B_t.
        \end{split}
    \end{equation}
    Then Young's inequality gives
    \begin{equation*}
        \d |X_t^{\xi}|^2\leq \big(-(2+\theta)|X_t^{\xi}|^2+\theta \mathbf{E}[|X_t^{\xi}|^2]+3^{2\delta^{-1}+1}(2\alpha)^{-2\delta^{-1}}+L^2+2\beta+\overline{\xs}^2\big)\d t+2X_t^{\xi}\sigma(X_t^{\xi})\d B_t.
    \end{equation*}
    Thus, applying It{\^o}'s formula to $e^{2t}|X_t^{\xi}|^2$ and taking expectations yield
    \begin{equation*}
    e^{2t}\mathbf{E}[|X_t^{\xi}|^2]\leq \mathbf{E}[|\xi|^2]+2^{-1}\big(3^{2\delta^{-1}+1}(2\alpha)^{-2\delta^{-1}}+L^2+2\beta+\overline{\xs}^2\big)e^{2t},
    \end{equation*}
    which implies \eqref{eq:p-moment estimate} for $p=2$.

    For $p>2$, applying It{\^o}'s formula to $|X_t^{\xi}|^p$ together with \eqref{eq:0802-2} and Young's inequality gives
    \begin{equation*}
        \begin{split}
            \d |X_t^{\xi}|^p&=\frac{p}{2} |X_t^{\xi}|^{p-2}\d |X_t^{\xi}|^2+\frac{1}{2}\frac{p}{2}\big(\frac{p}{2}-1\big)|X_t^{\xi}|^{p-4}\d \langle |X^{\xi}|^{2}\rangle_t\\
            &=\big(-p|X_t^{\xi}|^{p-2}X_t^{\xi}V'(X_t^{\xi})-p\theta|X_t^{\xi}|^p +p\theta |X_t^{\xi}|^{p-2} X_t^{\xi}\mathbf{E}[X_t^{\xi}]+\frac{p(p-1)}{2}|X_t^{\xi}|^{p-2}\xs^2(X_t^{\xi})\big)\d t\\
            &\qquad\qquad\qquad\qquad\qquad\qquad\qquad\qquad\qquad\qquad\qquad\qquad\qquad +p|X_t^{\xi}|^{p-2}X_t^{\xi}\xs(X_t^{\xi})\d B_t\\
            &\leq \big(-p\alpha |X_t^{\xi}|^{p+\delta}-\theta |X_t^{\xi}|^{p}+\theta \mathbf{E}[|X_t^{\xi}|^p]+C(p)(|X_t^{\xi}|^{p-2}+|X_t^{\xi}|^{p-1})\big)\d t\\
            &\qquad\qquad\qquad\qquad\qquad\qquad\qquad\qquad\qquad\qquad +p|X_t^{\xi}|^{p-2}X_t^{\xi}\xs(X_t^{\xi})\d B_t\\
            &\leq \big(-(p+\theta)|X_t^{\xi}|^p+\theta \mathbf{E}[|X_t^{\xi}|^p]+C(p)\big)\d t+p|X_t^{\xi}|^{p-2}X_t^{\xi}\xs(X_t^{\xi})\d B_t,
        \end{split}
    \end{equation*}
    where $C(p)=C(p,\alpha,\beta,\delta,\overline{\xs}, L)$ is a constant, depending only on $(p,\alpha,\beta,\delta,\overline{\xs}, L)$, may be different line by line. Similarly,  applying It{\^o}'s formula to $e^{pt}|X_t^{\xi}|^p$ and taking expectations, we derive \eqref{eq:p-moment estimate}.
\end{proof}

Indeed, we want to show the solutions are $p$-integrable for all $p\geq 2$ if the initial conditions are just square integrable.
For this purpose, we need the following extrinsic-law case.
Let $C([s,\8);\PP_2(\R))$ be the space of continuous functions from $[s,\8)$ to $\PP_2(\R)$.
For any $\mu\in C([s,\8);\PP_2(\R))$ and $\xi\in L^2(\xO,\FF_s,\bP)$, we denote $\{X_t^{s,\mu,\xi}\}_{t\geq s}$ by the solution of the following extrinsic-law-dependent SDE
\begin{equation}\label{eq:extrinsic-law-sde}
    \d X_t=-V'(X_t)\d t-(W'*\mu_t)(X_t)\d t+\xs(X_t)\d B_t, \ \ X_s=\xi, \ \ t\geq s.
\end{equation}
Similarly, we denote the solution $\{X_t^{0,\mu,\xi}\}_{t\geq 0}$ by $\{X_t^{\mu,\xi}\}_{t\geq 0}$.

\begin{lemma}\label{lem:ldsde-coming-down}
Suppose Assumption \ref{asp:potential} holds.
If $\mu\in C([0,\8);\PP_2(\R))$, and if $\xi\in L^2(\xO,\FF_0,\bP)$, then for any $p\geq2$, there exists a constant $C(p)=C(p,\alpha,\beta,\delta,\overline{\xs}, \xk, \theta,L)>0$ such that the unique solution $\{X_t^{\mu,\xi}\}_{t\geq0}$ of \eqref{eq:extrinsic-law-sde} satisfies
\begin{equation*}
\big(\bE\big[|X_t^{\mu,\xi}|^p\big]\big)^{1/p}\leq
C(p)\Big(t^{-\frac{1}{\delta}}+t^{\frac{\xk}{2}}+\sup_{s\in[0,t]}\|\mu_s\|_2^{\frac{2}{2+\delta}}\Big)\ \text{ for all $t>0$}.
\end{equation*}
\end{lemma}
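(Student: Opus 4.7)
The plan is to derive a Bernoulli-type differential inequality for $f(t) := \bE[|X_t^{\mu,\xi}|^p]$ and exploit the coming-down-from-infinity behaviour that such inequalities exhibit under super-linear dissipation.

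First, I would apply It\^o's formula to $|X_t^{\mu,\xi}|^p$ using the explicit form of the extrinsic-law SDE \eqref{eq:extrinsic-law-sde} with $W'(x) = \theta x$. Taking expectations, the drift contribution yields $-p\bE[|X|^{p-2}XV'(X)] - p\theta\bE[|X|^p] + p\theta m_t\bE[|X|^{p-2}X]$, where $m_t = \int y\,\d\mu_t(y)$ satisfies $|m_t| \leq \|\mu_t\|_2$, while the It\^o correction is $\frac{p(p-1)}{2}\bE[|X|^{p-2}\sigma^2(X)]$. Applying Assumption \ref{asp:potential}(iii) at $y=0$ (together with $|V'(0)| \leq L$ from (ii)) yields $-xV'(x) \leq -\xa|x|^{2+\xd} + \xb + L|x|$, so the principal dissipative term becomes $-p\xa\bE[|X|^{p+\xd}]$, which dominates everything else.

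Next, I would apply Young's inequality in the form $a^r \leq \xe\, a^{p+\xd} + C(\xe)$ (for $0 \leq r < p+\xd$) to absorb $\bE[|X|^{p-2}]$ and $\bE[|X|^{p-1}]$ into a small fraction of $\bE[|X|^{p+\xd}]$ plus constants, and a paired Young's inequality with conjugate exponents $(p+\xd)/(p-1)$ and $(p+\xd)/(1+\xd)$ to handle the mixed term $\|\mu_t\|_2\bE[|X|^{p-1}]$, producing a contribution $C\,\|\mu_t\|_2^{(p+\xd)/(1+\xd)}$. Combined with Jensen's inequality $\bE[|X|^{p+\xd}] \geq f(t)^{1+\xd/p}$, this yields
\[
f'(t) \leq -c\, f(t)^{1+\xd/p} + R_t,
\]
where $c = c(p,\xa) > 0$ and $R_t$ depends only on the listed parameters and $M_t := \sup_{s \leq t}\|\mu_s\|_2$. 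Then I would compare $f$ with the autonomous ODE $y' = -cy^{1+\xd/p} + R$ (with $R$ frozen at $R_t$). Its stable equilibrium is $y^* = (R/c)^{p/(p+\xd)}$, and the crucial observation is that every solution, including the one starting at $y(0) = +\8$, satisfies $y(t) \leq (c\xd t/p)^{-p/\xd} + y^*$ for every $t>0$: the dissipation-only ODE $y' = -cy^{1+\xd/p}$ gives the $t^{-p/\xd}$ universal decay regardless of initial condition. Taking $p$-th roots produces the $t^{-1/\xd}$ term and the $\sup_{s\leq t}\|\mu_s\|_2^{2/(2+\xd)}$ term (after noting $\|\mu_s\|_2^{(p+\xd)/(p(1+\xd))} \leq C\|\mu_s\|_2^{2/(2+\xd)}$ on the relevant range, with the trivial range absorbed into the constant).

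The $t^{\xk/2}$ term reflects Assumption \ref{asp:potential}(ii) and is the main technical subtlety: it enters through a BDG estimate of the stochastic integral $\int_0^t\sigma(X_s)\,\d B_s$ and a H\"older bound of the drift integral $\int_0^t V'(X_s)\,\d s$, which are needed to pass from $L^2$-data to an $L^p$-bound (for $p>2$) on a short-time window before the dissipative ODE mechanism takes effect. The main obstacle is the careful bookkeeping of the Young's exponents and constants so that (a) every lower-order moment absorbs into $\bE[|X|^{p+\xd}]$, (b) the exponent of $\|\mu_s\|_2$ after taking the $p$-th root reduces to the claimed $2/(2+\xd)$, and (c) the resulting constants depend only on $(p,\xa,\xb,\xd,\overline{\xs},\xk,\theta,L)$ and not on $\mu$ or $\xi$, which is what makes the estimate genuinely a coming-down-from-infinity statement.
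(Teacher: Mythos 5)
Your overall mechanism — a Bernoulli-type inequality $f'(t)\le -c\,f(t)^{1+\delta/p}+R_t$ for $f(t)=\bE[|X_t^{\mu,\xi}|^p]$, followed by the universal decay $y(t)\le (c\delta t/p)^{-p/\delta}+y^*$ of the comparison ODE — is a sound way to see coming-down-from-infinity, but the step you label ``the main technical subtlety'' hides a genuine gap that your proposed fix does not close. To derive the differential inequality via It\^o's formula plus expectations you need, a priori, that $\bE[|X_t^{\mu,\xi}|^p]$ is finite; the hypothesis gives only $\xi\in L^2(\xO,\FF_0,\bP)$, and the whole content of the lemma is precisely that the $p$-th moment becomes finite instantaneously. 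You propose to establish the $L^2\to L^p$ passage on a short time window via BDG for the martingale part and a H\"older bound for $\int_0^tV'(X_s)\,\d s$, but the latter is circular: since $|V'(x)|\le L(1+|x|^\kappa)$ with $\kappa\ge1$, an $L^p$ bound on that drift integral calls for $\bE[|X_s|^{\kappa p}]$ — a strictly higher moment than the one you are trying to control. There is therefore no window on which your expectation-based ODE mechanism can legitimately start.

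The paper avoids this entirely by the pathwise decomposition $X_t^{\mu,\xi}=Y_t+Z_t$ with $Z_t:=\int_0^t\sigma(X_s^{\mu,\xi})\,\d B_s$. Then $Y$ solves a random ODE (so $|Y_t|^2$ is differentiable in $t$ for each $\omega$), and hyper-dissipativity applied at the shifted point $y=Z_t$ yields the \emph{pathwise} inequality $\frac{\d}{\d t}|Y_t|^2\le -2\alpha|Y_t|^{2+\delta}+\theta\|\mu_t\|_2^2+2\beta+\gamma+\gamma|Z_t|^{2\kappa}$; setting $y_t=|Y_t|^{-\delta}$ and running your coming-down-from-infinity comparison pathwise never touches a moment of $X$. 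Expectations are taken only at the very end, on a quantity already bounded $\omega$-wise by $t^{-1/\delta}$, $\|\mu_t\|_2^{2/(2+\delta)}$ and powers of $Z_t^*=\sup_{s\le t}|Z_s|$, and $Z$ has all $L^p$ moments via BDG because $\sigma$ is bounded. This decomposition is also where the $t^{\kappa/2}$ term actually comes from (through $|V'(Z_t)|\lesssim 1+|Z_t|^\kappa$ and $\bE[(Z_t^*)^{\kappa p}]\lesssim t^{\kappa p/2}$); it is a by-product of the splitting, not something your expectation route would ever generate — indeed a truncation of $\xi$ plus Fatou could rescue your scheme, but would then produce $t^{-1/\delta}$ and a constant with no $t^{\kappa/2}$ at all, which is a sign that the obstruction had not been located correctly. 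The pathwise splitting $X=Y+Z$ is the idea to take away here.
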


\begin{proof}
For $t\geq0$, set
\begin{equation*}
Z_t:=\int_0^t\xs(X_s^{\mu,\xi})\d B_s,\ \ \ Y_t:=X_t^{\mu,\xi}-Z_t.
\end{equation*}
Then \eqref{eq:extrinsic-law-sde} implies that $Y_t$ solves the following random differential equation,
\begin{equation*}
Y_t=\xi+\int_0^t\Big(-V'(Y_s+Z_s)-\theta(Y_s+Z_s)+\theta\int_{\R}x\d \mu_s(x)\Big)\d s.
\end{equation*}
Hence, $|Y_t|^2$ is differentiable in $t$.
Then by Assumption \ref{asp:potential} (ii)-(iv), we have
\begin{equation*}
\begin{split}
\frac{\d}{\d t}(|Y_t|^2)
&=2Y_t\Big(-V'(Y_t+Z_t)-\theta(Y_t+Z_t)+\theta\int_{\R}x\d \mu_t(x)\Big)\\
&\leq -2\alpha |Y_t|^{2+\delta}+2\beta-2\theta |Y_t|^{2}+2\theta|Y_t|\big(|Z_t|+\theta^{-1}|V'(Z_t)|\big)+2\theta |Y_t|\|\mu_t\|_2\\
&\leq -2\alpha |Y_t|^{2+\delta}+\theta\|\mu_t\|_2^2+2\beta+\xg+\gamma|Z_t|^{2\kappa},
\end{split}
\end{equation*}
where $\xg=3\theta(1+\theta^{-2}L^2)$.
For $t\geq0$, let
\[
y_t:=|Y_t|^{-\delta},\ \ \ \mu_t^*:=\sup_{s\in[0,t]}\|\mu_s\|_2,\ \ \ Z_t^*:=\sup_{s\in[0,t]}|Z_s|.
\]
Then for any $t\geq 0$ with $y_t<\8$, i.e., $Y_t\neq 0$, we have
\begin{equation*}
\begin{split}
\dif{y_t}{t}&=-\frac{\delta}{2}|Y_t|^{-(2+\delta)}\frac{\d}{\d t}(|Y_t|^2)
\geq \alpha\delta-\frac{\alpha\delta}{2}\Big(\frac{\theta}{\xa}(\mu_t^*)^2+\frac{2\xb+\xg}{\xa}+\frac{\xg}{\xa}(Z_t^*)^{2\kappa}\Big)y_t^{\frac{2+\delta}{\delta}}.
\end{split}
\end{equation*}
Hence, for any $s\in [0,t]$,
\begin{equation}\label{eq:0825-1}
\dif{y_s}{s}\geq \frac{\alpha\delta}{2}>0, \ \text{ if } \ y_s\leq\Big(\frac{\theta}{\xa}(\mu_t^*)^2+\frac{2\xb+\xg}{\xa}+\frac{\xg}{\xa}(Z_t^*)^{2\kappa}\Big)^{-\frac{\delta}{2+\delta}}.
\end{equation}
Then we have the following claim.

\noindent\textbf{Claim:} We conclude that
\begin{equation}\label{eq:lower-bd-y_t}
y_t\geq \min\bigg\{\frac{\alpha\delta}{2}t,\ \Big(\frac{\theta}{\xa}(\mu_t^*)^2+\frac{2\xb+\xg}{\xa}+\frac{\xg}{\xa}(Z_t^*)^{2\kappa}\Big)^{-\frac{\delta}{2+\delta}}\bigg\}, \ \text{ for any } \ t\geq 0.
\end{equation}

\noindent\textbf{Proof of Claim:} For any fixed $t\geq 0$, set
\[
A_t:=\Big(\frac{\theta}{\xa}(\mu_t^*)^2+\frac{2\xb+\xg}{\xa}+\frac{\xg}{\xa}(Z_t^*)^{2\kappa}\Big)^{-\frac{\delta}{2+\delta}}.
\]
The proof of the claim is divided into the following two cases.
\begin{equation*}
\begin{split}
    &\textbf{Case 1:} \ y_t\geq \frac{\alpha\delta}{2}t, \ \text{ if } \ y_s\leq A_t, \ \text{ for all } \ s\in [0,t];\\
    &\textbf{Case 2:} \ y_t\geq A_t, \ \text{ if there exists $s\in [0,t]$ such that } \ y_s\geq A_t.
\end{split}
\end{equation*}
Case 1 is a direct consequence of \eqref{eq:0825-1}. To prove Case 2, we assume otherwise that $y_t<A_t$. Note that $y_{\cdot}$ is continuous in $[s,t]$ and $y_s\geq A_t$, let
\[
r:=\inf\{v\in [s,t]: y_u\leq A_t, \text{ for all } u\in [v,t]\},
\]
we conclude that $s\leq r<t$ and
\begin{equation*}
    y_u\leq A_t, \text{ for all } u\in [r,t], \ \text{ and } \ A_t=y_r>y_t.
\end{equation*}
Then the mean value theorem indicates 
\[
\frac{\d y_u}{\d u}\Big|_{u=u_0}=\frac{y_t-y_r}{t-r}<0, \ \text{ for some } \ u_0\in [r,t],
\]
which gives a contradiction to \eqref{eq:0825-1} and the fact that $y_{u_0}\leq A_t$. We complete the proof of the claim.

By \eqref{eq:lower-bd-y_t}, we have, for any $p\geq 1$ and $t>0$,
\begin{equation*}
\begin{split}
|X_t^{\mu,\xi}|^p&\leq 2^{p-1}(|Y_t|^p+|Z_t|^p)\\
&\leq 2^{p-1}(y_t^{-\frac{p}{\delta}}+(Z_t^*)^p)\\
&\leq 2^{p-1}\max\bigg\{\Big(\frac{\alpha\delta}{2}t\Big)^{-\frac{p}{\delta}}, \Big(\frac{\theta}{\xa}(\mu_t^*)^2+\frac{2\xb+\xg}{\xa}+\frac{\xg}{\xa}(Z_t^*)^{2\kappa}\Big)^{\frac{p}{2+\delta}}\bigg\}+2^{p-1}(Z_t^*)^p\\
&\leq C(p)\Big(t^{-\frac{p}{\delta}}+(\mu_t^*)^{\frac{2p}{2+\delta}}+1+(Z_t^*)^{\xk p}\Big).
\end{split}
\end{equation*}
Assumption \ref{asp:potential} (v) and the Burkholder-Davis-Gundy (BDG) inequality imply that there exists $C(\xk,p)>0$ such that
\begin{equation*}
\mathbf{E}\big[(Z_t^*)^{\xk p}\big]\leq C(\xk,p)\mathbf{E}\bigg[\Big(\int_0^t|\xs(X_s^{\mu,\xi},\mu_s)|^2\d s\Big)^{\frac{\xk p}{2}}\bigg]\leq C(\xk,p)\cdot(\overline{\xs}t)^{\frac{\xk p}{2}}.
\end{equation*}
Thus, for any $p\geq 2$, there is some constant $C(p)>0$ satisfying, for all $t>0$,
\begin{equation*}
\begin{split}
\big(\bE\big[|X_t^{\mu,\xi}|^p\big]\big)^{1/p}\leq
C(p)\Big(t^{-\frac{1}{\delta}}+t^{\frac{\xk}{2}}+\sup_{s\in[0,t]}\|\mu_s\|_2^{\frac{2}{2+\delta}}\Big).
\end{split}
\end{equation*}
\end{proof}

Next, we give the $p$-th moment estimates for the solutions of \eqref{eq:mvsystem} when the initial conditions are square integrable.

\begin{lemma}\label{lem:mvsde-coming-down}
Suppose Assumption \ref{asp:potential} holds.
If $\xi\in L^2(\xO,\FF_0,\bP)$, then $X_t^{\xi}\in L^p(\xO,\FF_t,\bP)$ for all $p\geq2$, $t>0$.
Moreover, for any $p\geq2$, there exists a constant $C(p)\geq1$ such that, for all $t\geq1$,
\begin{equation*}
\big(\bE\big[|X_t^{\xi}|^p\big]\big)^{1/p}\leq\e^{-(t-1)}\big(\bE[|\xi|^2]\big)^{1/2}+C(p).
\end{equation*}
\end{lemma}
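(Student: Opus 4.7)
My plan is a bootstrap argument: first use Lemma \ref{lem:ldsde-coming-down} to upgrade the $L^2$ initial data to $L^p$ at time $1$, then iterate Lemma \ref{lem:p-moment estimate} on $[1,t]$ to recover the exponential decay. Taking $\mu_s:=\LL(X_s^{\xi})$ makes $X_s^{\mu,\xi}$ solve the same equation as $X_s^{\xi}$, so by pathwise uniqueness they coincide. Feeding this $\mu$ into Lemma \ref{lem:ldsde-coming-down} and rewriting $\|\LL(X_s^{\xi})\|_2^{2/(2+\xd)}=(\bE[|X_s^{\xi}|^2])^{1/(2+\xd)}$, I get
\[
\big(\bE[|X_t^{\xi}|^p]\big)^{1/p}\leq C(p)\Big(t^{-1/\xd}+t^{\xk/2}+\sup_{s\in[0,t]}(\bE[|X_s^{\xi}|^2])^{1/(2+\xd)}\Big).
\]
Combined with the $p=2$ case of Lemma \ref{lem:p-moment estimate}, which bounds $\bE[|X_s^{\xi}|^2]\leq\bE[|\xi|^2]+C(2)$ uniformly in $s\geq0$, and the sub-additivity $(a+b)^{1/(2+\xd)}\leq a^{1/(2+\xd)}+b^{1/(2+\xd)}$, this shows $X_t^{\xi}\in L^p$ for every $t>0$ and $p\geq 2$, and specializing to $t=1$ gives
\[
\big(\bE[|X_1^{\xi}|^p]\big)^{1/p}\leq C_1(p)+C_2(p)(\bE[|\xi|^2])^{1/(2+\xd)}.
\]

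The main obstacle is the mismatch between this display and the target bound, which allows only the coefficient $1$ (not $C_2(p)$) in front of $(\bE[|\xi|^2])^{1/2}$. The key observation is that, since $2/(2+\xd)<1$, an elementary two-case analysis at the crossover $A=\epsilon^{-(2+\xd)/\xd}$ yields the Young-type estimate $A^{2/(2+\xd)}\leq\epsilon A+\epsilon^{-2/\xd}$ valid for all $A,\epsilon>0$. Applying it with $A=(\bE[|\xi|^2])^{1/2}$ and $\epsilon=1/C_2(p)$ absorbs the multiplicative constant into a new $p$-dependent additive one and delivers
\[
\big(\bE[|X_1^{\xi}|^p]\big)^{1/p}\leq(\bE[|\xi|^2])^{1/2}+\widetilde{C}(p).
\]

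Finally, by time-homogeneity of \eqref{eq:mvsystem}, for $t\geq1$ the law $\LL(X_t^{\xi})$ equals the law at time $t-1$ of the McKean-Vlasov flow restarted from any random variable distributed as $X_1^{\xi}$. Since this restart datum now lies in $L^p$, Lemma \ref{lem:p-moment estimate} applied on $[0,t-1]$ and then taking $p$-th roots gives
\[
\big(\bE[|X_t^{\xi}|^p]\big)^{1/p}\leq\e^{-(t-1)}\big(\bE[|X_1^{\xi}|^p]\big)^{1/p}+C(p)^{1/p}\leq\e^{-(t-1)}(\bE[|\xi|^2])^{1/2}+C(p),
\]
where the last step uses $\e^{-(t-1)}\widetilde{C}(p)\leq\widetilde{C}(p)$ for $t\geq1$ and absorbs all $p$-dependent terms into a final $C(p)$, concluding the argument.
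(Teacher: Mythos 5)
Your proposal is correct and follows essentially the same route as the paper: combine Lemma \ref{lem:p-moment estimate} ($p=2$) with Lemma \ref{lem:ldsde-coming-down} to reach $L^p$ at any $t>0$, restart at time $1$ via Lemma \ref{lem:p-moment estimate}, and absorb the constant multiplying $(\bE[|\xi|^2])^{1/(2+\xd)}$ by a Young-type inequality with exponents $\tfrac{2+\xd}{2}$ and $\tfrac{2+\xd}{\xd}$. The only cosmetic difference is the order in which the Young step and the time-$1$ restart are applied; both orderings yield the stated bound.
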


\begin{proof}
Denote $\mu_t:=\LL(X_t^{\xi})$ for $t\geq0$, and by Lemma \ref{lem:p-moment estimate} and Lemma \ref{lem:ldsde-coming-down}, we have for any $t>0$,
\begin{equation}\label{pf:mvsde-coming-down-1}
\big(\bE\big[|X_t^{\xi}|^p\big]\big)^{1/p}\leq C(p)\Big(t^{-\frac{1}{\delta}}+t^{\frac{\kappa}{2}}+\big(\bE[|\xi|^2]\big)^{\frac{1}{2+\xd}}\Big).
\end{equation}
Then for any $t\geq 1$, Lemma \ref{lem:p-moment estimate} and \eqref{pf:mvsde-coming-down-1} yield
\[
\big(\bE\big[|X_t^{\xi}|^p\big]\big)^{1/p}\leq\e^{-(t-1)}\big(\bE\big[|X_1^{\xi}|^p\big]\big)^{1/p}+C(p)\leq e^{-(t-1)}C(p)\big(\bE[|\xi|^2]\big)^{\frac{1}{2+\xd}}+C(p).
\]
Apply again Young's inequality by noticing $\frac{1}{(2+\xd)/2}+\frac{1}{(2+\xd)/\xd}=1$, we complete the proof.
\end{proof}

Now we are ready to prove the coming down from infinity under the hyper-dissipative condition.

\begin{proposition}\label{thm:mvsde-attractor}
Suppose Assumption \ref{asp:potential} holds.
Let $P_t^*:\PP_2(\R)\to\PP_2(\R)$ be the semigroup of the McKean-Vlasov SDE \eqref{eq:mvsystem}. Then for any $t>0$, $P_t^*$ is a compact map.
Moreover, there exists a nonempty set $K\subset \PP_{\8}(\R)$ such that
\begin{enumerate}[label=\textnormal{(\roman*)}]
\item $K$ is a connected compact set in $\PP_2(\R)$;
\item $P_t^*K\subset K$ for all $t\geq 2$;
\item $K$ is order bounded in $\PP_2(\R)$, i.e., there exist $\underline{\mu},\overline{\mu}\in\PP_2(\R)$ such that $\underline{\mu}\leq_{\st}K\leq_{\st}\overline{\mu}$;
\item $K$ absorbs all bounded sets in $\PP_2(\R)$, i.e., for any bounded set $B\subset \PP_2(\R)$, there exists $T>0$ such that $P_t^*B\subset K$ for all $t\geq T$.
\end{enumerate}
\end{proposition}

\begin{proof}
We first show the compactness of $P_t^*$ for any $t>0$.
For any bounded set $B$ in $\PP_2(\R)$, it follows from \eqref{pf:mvsde-coming-down-1} in Lemma \ref{lem:mvsde-coming-down} that $P_t^*B$ is a bounded subset of $\PP_p(\R)$ for all $p>2$.
By Corollary \ref{coro:bounded-order-bounded}, $P_t^*B$ is order bounded in $\PP_2(\R)$ with respect to the stochastic order.
Then Proposition \ref{prop:order-bounded-precompact} implies $P_t^*B$ is relatively compact in $\PP_2(\R)$, so $P_t^*$ is compact.

Next, we prove (i)-(iv). Set
\begin{equation}\label{pf:mvsde-attractor}
K_p:=\Big\{\mu\in\PP_p(\R): \|\mu\|_p\leq \frac{e}{e-1}C(p)\Big\}, \ \text{ and } \ K:=\bigcap_{p\geq2}K_p,
\end{equation}
where the constant $C(p)$ comes from Lemma \ref{lem:mvsde-coming-down}.
Clearly, $\xd_0\in K$, so $K$ is nonempty.
We now show (i)-connectedness.
Because $K$ is convex and the operation of convex combinations is continuous in $\PP_2(\R)$, we obtain $K$ is connected.
Then we want to show (iii).
Since $K$ is bounded in $\PP_p(\R)$ for all $p>2$, it follows from Corollary \ref{coro:bounded-order-bounded} that $K$ is order bounded.
By Proposition \ref{prop:order-bounded-precompact}, to prove (i)-compactness, we only need to show $K$ is closed. Suppose a $\PP_2$-Cauchy sequence $\{\mu_n\}_{n\geq 1}\subset K$ converges to a probability measure $\mu\in \PP_2(\R)$, and it remains to check $\mu\in K$, i.e., $\mu\in K_p$ for all $p\geq 2$. By \cite[Theorem 6.9]{Villani2009}, $\mu_n$ weakly converges to $\mu$. For any $p\geq 2$, since $\{\mu_n\}_{n\geq 1}\subset K_p$, then 
\[
\begin{split}
    \int_{\R}|x|^p\d \mu(x)&=\lim_{N\to \8}\int_{\R}(|x|^p\wedge N)\d \mu(x)\\
    &=\lim_{N\to \8}\lim_{n\to \8}\int_{\R}(|x|^p\wedge N)\d \mu_n(x)\\
    &\leq \sup_{n\geq 1}\int_{\R}|x|^p\d \mu_n(x)\\
    &\leq \frac{e}{e-1}C(p).
\end{split}
\]
This gives $\mu\in K_p$. Then $\mu\in K$ as $p\geq 2$ is arbitrary.

The statements (ii)(iv) can be easily checked by Lemma \ref{lem:mvsde-coming-down} and the construction of $K$ in \eqref{pf:mvsde-attractor}. The proof is completed.
\end{proof}

\subsection{Proof of Theorem \ref{thm:global-convergence} and Corollary \ref{coro:unique-convergence}}
\label{subsec:proof-global-convergence}

We first give the following lemma, which is a direct consequence of Feng-Qu-Zhao \cite[Theorem 4.8]{Feng-Qu-Zhao2023}. Let $C_b(\R;\PP_2(\R))$ be the collection of all bounded continuous paths $\{\nu_t\}_{t\in \R}$ in $\PP_2(\R)$.

\begin{lemma}\label{lem:time-inhomogeneous}
    Suppose Assumption \ref{asp:potential} holds, and let $\{\nu_t\}_{t\in \R}\in C_b(\R;\PP_2(\R))$. Consider the following time-inhomogeneous SDE
    \begin{equation}\label{eq:sde-1}
        \d X_t=-V'(X_t)\d t-(W'*\nu_t)(X_t)\d t+\xs(X_t)\d B_t.
    \end{equation}
     Let $X_t^{s,\mu}$ be the unique solution of equation \eqref{eq:sde-1} at time $t$ with starting time $s$ and initial distribution $\mu$ and $P^*(t,s)$ be the dual semigroup of equation \eqref{eq:sde-1}, i.e., $P^*(t,s)\mu:=\LL(X_t^{s,\mu})$. Then the equation \eqref{eq:sde-1} has a unique entrance measure $\{\rho^{\nu}_t\}_{t\in \R}$ in $C_b(\R;\PP_2(\R))$, i.e., $P^*(t,s)\rho^{\nu}_s=\rho^{\nu}_t$ for all $t\geq s$ and there exist $C,\lambda>0$ such that for all $\mu\in \PP_2(\R)$, $t\geq s$, we have
     \begin{equation}
         d_2\big(\LL(X_t^{s,\mu}), \rho^{\nu}_t\big)\leq C(1+\|\mu\|_2^2)e^{-\lambda (t-s)},
     \end{equation}
    where $d_2(\cdot,\cdot)$ is defined by
      \[
      d_2(\mu_1,\mu_2):=\int_{\R}(1+|x|^2)\d |\mu_1-\mu_2|(x), \ \text{ for all } \ \mu_1,\mu_2\in \PP_2(\R).
      \]
    In particular,
     \begin{enumerate}[label=\textnormal{(\roman*)}]
        \item if $\{\nu_t\}_{t\in \R}$ is an entrance measure of \eqref{eq:mvsystem}, i.e., $P^*_t\nu_s=\nu_{t+s}$ for all $t\geq 0, s\in \R$, then $\rho^{\nu}_t=\nu_t$ for all $t\in \R$;
        \item if $\{\nu_t\}_{t\in \R}$ is a singleton $\{\nu\}\subset\PP_2(\R)$, then  the unique entrance measure $\{\rho^{\nu}_t\}_{t\in \R}$ is also a singleton $\{\rho\}\subset \PP_2(\R)$ such that $\rho$ is the unique invariant measure of \eqref{eq:sde-1} in $\PP_2(\R)$.
     \end{enumerate}
\end{lemma}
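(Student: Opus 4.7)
The plan is to verify that the time-inhomogeneous SDE \eqref{eq:sde-1} falls within the scope of \cite[Theorem 4.8]{Feng-Qu-Zhao2023}, which then delivers the existence, uniqueness, and exponential convergence directly, and then to establish the two particular cases (i)(ii) as consequences of the uniqueness of the entrance measure in $C_b(\R;\PP_2(\R))$.

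\textbf{Step 1: Reduction to the framework of \cite{Feng-Qu-Zhao2023}.} Write the drift in \eqref{eq:sde-1} as
\[
b(t,x)=-V'(x)-(W'*\nu_t)(x)=-V'(x)-\theta x+\theta m_t,\qquad m_t:=\int_{\R}y\,\d\nu_t(y).
\]
Since $\{\nu_t\}_{t\in\R}$ is bounded in $\PP_2(\R)$, the function $t\mapsto m_t$ is bounded and continuous on $\R$. Then Assumption \ref{asp:potential}(i)(ii)(iii) transfer uniformly in $t$ to $b(t,\cdot)$: the $-\theta x$ contribution is strictly one-sided dissipative, while the bounded term $\theta m_t$ is harmless. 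Together with the Lipschitz continuity and uniform non-degeneracy of $\xs$ from Assumption \ref{asp:potential}(v), the coefficients of \eqref{eq:sde-1} satisfy the structural hypotheses of \cite[Theorem 4.8]{Feng-Qu-Zhao2023}. That theorem then gives, in the metric $d_2$, the existence of a unique entrance measure $\{\rho_t^{\nu}\}_{t\in\R}\in C_b(\R;\PP_2(\R))$ of the two-parameter semigroup $P^*(t,s)$, together with the claimed exponential contraction
\[
d_2\bigl(\LL(X_t^{s,\mu}),\rho_t^{\nu}\bigr)\leq C(1+\|\mu\|_2^2)\,\e^{-\xl(t-s)},\quad t\geq s,\ \mu\in\PP_2(\R),
\]
for some $C,\xl>0$ depending only on the uniform constants in Assumption \ref{asp:potential} and on $\sup_{t\in\R}\|\nu_t\|_2$.

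\textbf{Step 2: Case (i), entrance measures of \eqref{eq:mvsystem}.} Suppose $\{\nu_t\}_{t\in\R}$ itself is an entrance measure of \eqref{eq:mvsystem}, i.e.\ $P_t^*\nu_s=\nu_{t+s}$. For fixed $s\in\R$ let $\xi$ have law $\nu_s$ and solve \eqref{eq:mvsystem} from $s$; by hypothesis $\LL(X_t)=\nu_t$ for all $t\geq s$, so $\LL(X_t)$ substituted into the McKean--Vlasov drift coincides with $\nu_t$ itself, meaning $\{X_t\}_{t\geq s}$ is actually a solution of the linearized equation \eqref{eq:sde-1}. Hence $P^*(t,s)\nu_s=\nu_t$ for all $t\geq s$, so $\{\nu_t\}_{t\in\R}\in C_b(\R;\PP_2(\R))$ is an entrance measure of \eqref{eq:sde-1}. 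By the uniqueness established in Step 1, $\rho_t^{\nu}=\nu_t$ for all $t\in\R$.

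\textbf{Step 3: Case (ii), constant path $\nu_t\equiv\nu$.} When $\nu_t$ is the constant path $\{\nu\}$, the equation \eqref{eq:sde-1} becomes time-homogeneous with drift $-V'(x)-\theta(x-\int y\,\d\nu(y))$, which is strictly dissipative, Lipschitz outside any bounded set in a one-sided sense, and driven by a non-degenerate Lipschitz diffusion. Standard ergodicity for such SDEs (or a direct specialization of Step~1 to the time-homogeneous setting) yields a unique invariant probability measure $\rho\in\PP_2(\R)$. The constant path $\{\rho\}$ then belongs to $C_b(\R;\PP_2(\R))$ and is an entrance measure of \eqref{eq:sde-1}; uniqueness from Step 1 forces $\rho_t^{\nu}\equiv\rho$.

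\textbf{Main obstacle.} The substantive content lies entirely in Step 1, namely checking that the hypotheses of \cite[Theorem 4.8]{Feng-Qu-Zhao2023} are satisfied uniformly in $t$. The one-sided Lipschitz, polynomial growth, and hyper-dissipativity of $b(t,\cdot)$ hold uniformly in $t$ because $m_t$ enters only as a bounded additive perturbation; however, one must verify that the constants in that theorem depend on $\{\nu_t\}$ only through $\sup_{t\in\R}\|\nu_t\|_2$ (so that the rate $\xl$ and prefactor $C$ are honest constants). Steps 2 and 3 are then short applications of the uniqueness statement.
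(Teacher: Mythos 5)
Your proposal is correct and follows essentially the same route as the paper: the paper's entire proof of this lemma is a one-line citation to \cite[Theorem 4.8]{Feng-Qu-Zhao2023}, treating the whole lemma (including parts (i) and (ii)) as a direct consequence, and you have simply spelled out the hypothesis-checking and the two uniqueness arguments that the paper leaves implicit. No gaps; your Step 2 correctly uses that an entrance measure of the nonlinear equation is an entrance measure of the linearized one, and your Step 3 correctly uses that a unique invariant measure of the time-homogeneous linear SDE is a constant entrance measure, both then finished by the uniqueness claim.
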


The next lemma unveils the totally ordered structure of the set of invariant measures.

\begin{lemma}\label{lem:comparison-invariant-measure}
Suppose Assumption \ref{asp:potential} holds, and let $\mu_1,\mu_2$ be two invariant measures of the equation \eqref{eq:mvsystem}.
If $\int_{\R}x\d\mu_1(x)\leq\int_{\R}x\d\mu_2(x)$, then $\mu_1\leq_{\st}\mu_2$.
\end{lemma}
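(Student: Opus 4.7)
The plan is to reduce the comparison of two McKean--Vlasov invariant measures to a comparison between two \emph{time-homogeneous, extrinsic-law} SDEs obtained by freezing the measure argument, so that Theorem~\ref{thm:ldsde-comparison} becomes applicable. The key observation is that an invariant measure $\mu_i$ of \eqref{eq:mvsystem} is automatically invariant for the frozen SDE \eqref{eq:extrinsic-law-sde} with $\mu_t\equiv\mu_i$, and by Lemma~\ref{lem:time-inhomogeneous}(ii) it is the \emph{unique} invariant measure of that frozen SDE in $\PP_2(\R)$, with all trajectories attracted to it in $d_2$ (hence in $\WW_2$).

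Concretely, I would fix an arbitrary $\xi\in L^2(\xO,\FF_0,\bP)$ (for instance $\xi=0$) and consider simultaneously the two extrinsic-law-dependent SDEs \eqref{eq:extrinsic-law-sde} driven by the constant paths $\mu_t\equiv\mu_1$ and $\nu_t\equiv\mu_2$, both starting from $\xi$. The hypothesis $\int_{\R}x\,\d\mu_1(x)\le \int_{\R}x\,\d\mu_2(x)$ is exactly \eqref{ineq:mulenu} for these constant paths, and $\LL(\xi)\leq_{\st}\LL(\xi)$ trivially. Theorem~\ref{thm:ldsde-comparison} then yields
\[
\LL(X_t^{\mu_1,\xi})\leq_{\st}\LL(X_t^{\mu_2,\xi})\quad\text{for all }t\ge 0.
\]

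To conclude, I let $t\to\8$. By Lemma~\ref{lem:time-inhomogeneous}(ii) applied to each frozen SDE, $\LL(X_t^{\mu_i,\xi})\to\mu_i$ in the metric $d_2$, and in particular in $\WW_2$, since $\mu_i$ is the unique invariant measure of the respective frozen SDE in $\PP_2(\R)$. The closedness of the stochastic order in $\PP_2(\R)\times\PP_2(\R)$ recalled in Section~\ref{sec:preliminaries of stochastic-order} then gives $\mu_1\leq_{\st}\mu_2$, which is the desired conclusion.

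I do not expect a serious obstacle here: the only mild points are (a)~verifying that $\mu_i$ is indeed the unique invariant measure of the frozen SDE (which is immediate from Lemma~\ref{lem:time-inhomogeneous}(ii) once one notes that the McKean--Vlasov invariance of $\mu_i$ makes it invariant for the frozen dynamics as well), and (b)~checking that convergence in $d_2$ implies $\WW_2$-convergence, so that the closedness of $\leq_{\st}$ may be invoked.
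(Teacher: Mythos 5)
Your proposal is correct and follows essentially the same route as the paper: freeze the law to $\mu_t\equiv\mu_i$ (the paper writes out the resulting constant drift $\theta m_i$ explicitly), apply Theorem~\ref{thm:ldsde-comparison} to get $\LL(X_t^{\mu_1,\xi})\leq_{\st}\LL(X_t^{\mu_2,\xi})$, use Lemma~\ref{lem:time-inhomogeneous}(ii) for $d_2$-convergence to the respective unique invariant measures, upgrade to $\WW_2$-convergence, and conclude by closedness of $\leq_{\st}$. The two mild points you flag are exactly what the paper spells out, so there is no gap.
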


\begin{proof}
Let $m_i=\int_{\R}x\d\mu_i(x), i=1,2$.
Now we consider the following SDEs
\begin{equation}\label{eq:SDE-i}
    \d X_t^{i}=\big(-V'(X_t^{i})-\theta X_t^{i}+\theta m_i\big)\d t+\xs(X_t^{i})\d B_t,\quad i=1,2.
\end{equation}
Obviously, $\mu_i$ is an invariant measures of \eqref{eq:SDE-i}. Denote by $X_t^{i,x}$ the solution of \eqref{eq:SDE-i} with initial value $x$ and starting time $0$, $i=1,2$. Since $m_1\leq m_2$, it follows from 
the comparison theorem for usual SDEs \cite[Theorem 1.2]{Geib1994} 
that
\begin{equation}
\label{pf:comparison-invariant-measure-3}
\LL(X_t^{1,0})\leq_{\st}\LL(X_t^{2,0})\ \text{ for all $t\geq0$}.
\end{equation}
By Lemma \ref{lem:time-inhomogeneous}, $\mu_i$ is the unique invariant measure of \eqref{eq:SDE-i} and
\begin{equation}
\label{pf:comparison-invariant-measure-4}
d_2(\LL(X_t^{i,0}),\mu_i)\to0\ \text{ as $t\to\8$, for $i=1,2$},
\end{equation}
 Note that $\WW_2(\mu,\nu)\leq \sqrt{2d_2(\mu,\nu)}$ (see e.g. \cite[Theorem 6.15]{Villani2009}). Then we conclude that
\begin{equation}\label{pf:comparison-invariant-measure-7}
\WW_2(\LL(X_t^{i,0}),\mu_i)\to0\ \text{ as $t\to\8$, for $i=1,2$}.
\end{equation}
Together with \eqref{pf:comparison-invariant-measure-3}\eqref{pf:comparison-invariant-measure-7}, the closedness of stochastic order in $\PP_2(\R)$ implies that $\mu_1\leq_{\st}\mu_2$.
\end{proof}

\begin{remark}\label{rem:comparison-invariant-measure}
In general, we can consider two SDEs as below,
\begin{align}
\d X_t^{1}=\big(-V'(X_t^{1})-\theta X_t^{1}+\theta m_1\big)\d t+\xs(X_t^{1})\d B_t,
\label{rem:comparison-invariant-measure-1}\\
\d X_t^{2}=\big(-V'(X_t^{2})-\theta X_t^{2}+\theta m_2\big)\d t+\xs(X_t^{2})\d B_t.
\label{rem:comparison-invariant-measure-2}
\end{align}
By the proof of Lemma \ref{lem:comparison-invariant-measure}, whenever $m_1\leq m_2$, the invariant measures $\mu_1$, $\mu_2$ of the equations \eqref{rem:comparison-invariant-measure-1}, \eqref{rem:comparison-invariant-measure-2} are order related, precisely $\mu_1\leq_{\st}\mu_2$. 
\end{remark}

Now we are in a position to prove Theorem \ref{thm:global-convergence} and Corollary \ref{coro:unique-convergence}.

\begin{proof}[Proof of Theorem \ref{thm:global-convergence}]
(i). By Lemma \ref{lem:comparison-invariant-measure}, we see $\MM$ is totally ordered with respect to the stochastic order.
Take $K\subset\PP_{\8}(\R)$ as the set in \eqref{pf:mvsde-attractor}, and Proposition \ref{thm:mvsde-attractor} (iv) gives $\MM\subset K\subset\PP_{\8}(\R)$.
The finiteness of $\MM$ is postponed to the end of the proof.

(ii). 
By \cite[Theorem 3.5]{Liu-Qu-Yao-Zhi2024}, the semigroup $P_t^*:\PP_2(\R)\to\PP_2(\R)$ generated by the equation \eqref{eq:mvsystem} is an order-preserving semiflow.
Proposition \ref{thm:mvsde-attractor} gives a eventually positively invariant, order bounded, closed set $K\subset\PP_2(\R)$, and this set $K$ attracts all bounded subsets of $\PP_2(\R)$.
Now, apply Proposition \ref{T:attractor}, whose statements (i)(ii) say that, there are two invariant measures $\underline{\nu},\overline{\nu}\in\PP_2(\R)$ such that the global attractor $\xo(K)\subset[\underline{\nu},\overline{\nu}]$. By the definition of global attractor, one has $\MM\subset\xo(K)$. Obviously, $\underline{\nu}=\inf_{\leq_{\st}}\MM$ and $\overline{\nu}=\sup_{\leq_{\st}}\MM$.
Next, we need to show the global convergence to the order interval $[\underline{\nu},\overline{\nu}]$.
Since $\xo(K)$ is the global attractor, the inclusion $\xo(K)\subset[\underline{\nu},\overline{\nu}]$ tells us, for all bounded sets $B\subset\PP_2(\R)$, $\WW_2(P_t^*B,[\underline{\nu},\overline{\nu}])\to0$ as $t\to\8$.

(iii). This is a consequence of Proposition \ref{T:attractor} (iii).

(i)-finiteness.
Now we prove that $\MM$ is a finite set.
Let $\{\mu_m\}_{m\in \R}$ be probability measures given in \eqref{eq:density-function}, i.e., 
\begin{equation*}
\dif{\mu_m}{x}\propto\frac{1}{\xs^2(x)}\exp\bigg\{-2\int_0^x\frac{V'(y)+\theta y-\theta m}{\xs^2(y)}\d y\bigg\}.
\end{equation*}
By a simple calculation, $\mu_m$ solves the following Fokker-Planck equation 
\[
\mathcal{L}^*_m\mu_m=0,
\]
where $\mathcal{L}_m^*$ is the dual operator of the following $\mathcal{L}_m$:
\[
\mathcal{L}_m\varphi(x)=\frac{1}{2}\sigma^2(x)\partial^2_x \varphi(x)+\big(-V'(x)-\theta x+\theta m\big)\partial_x\varphi(x).
\]
Hence, $\mu_m$ is an invariant measure of the following SDE
\begin{equation}\label{eq:m-sde}
\d X_t=\big(-V'(X_t)-\theta X_t+\theta m\big)\d t+\xs(X_t)\d B_t,
\end{equation}
and it is unique (see Meyn-Tweedie \cite{Meyn-Tweedie1993-book,Meyn-Tweedie1993} or Lemma \ref{lem:time-inhomogeneous}).
Thus, a measure $\mu$ is an invariant measure of \eqref{eq:mvsystem} if and only if
\begin{equation}\label{pf:self-consistency}
\mu=\mu_m,\ \text{ and }\ \int_{\R} (x-m)\d \mu(x)=0\ \text{ for some $m\in \R$}.
\end{equation}
It is equivalent to
\[
\mu=\mu_m,\ \text{ and }\ \int_{\R}\frac{x-m}{\xs^2(x)}\exp\bigg\{-2\int_0^x\frac{V'(y)+\theta y-\theta m}{\xs^2(y)}\d y\bigg\}\d x=0\ \text{ for some $m\in \R$}.
\]
Consider the following self-consistency function,
\begin{equation}\label{eq:self-consistency-function}
F(m):=\int_{\R}\frac{x-m}{\xs^2(x)}\exp\bigg\{-2\int_0^x\frac{V'(y)+\theta y-\theta m}{\xs^2(y)}\d y\bigg\}\d x.
\end{equation}
Then the number of zeros of $F$ is exactly the number of invariant measures of the equation \eqref{eq:mvsystem}, and each zero of $F$ is the mean value of the corresponding invariant measure.
We assume there are infinitely many invariant measures, and argue by contradiction.
By the statement (ii), the set of invariant measures $\MM$ is contained in an order interval $[\underline{\nu},\overline{\nu}]$.
So the boundedness of $[\underline{\nu},\overline{\nu}]$ in $\PP_2(\R)$ (see Proposition \ref{prop:order-bounded-precompact}) entails boundedness of the zero set of $F$ in $\R$.
Since invariant measures are infinitely many, the zero set of $F$ has an accumulation point.
By Assumption \ref{asp:potential} (iii)(v), we have $F$ is an analytic function, and this implies $F$ is identically zero.
This is impossible, and we complete the proof.
\end{proof}

\begin{remark}\label{R:connected global attractor}
In Remark \ref{R:invariant measures cannot attract bdd sets}, we show that the set of invariant measures cannot attract all the bounded subsets of $\PP_2(\R)$. Here, by Proposition \ref{T:attractor} (i)(ii) and Proposition \ref{thm:mvsde-attractor} (i), there exists a connected compact invariant set (the global attractor) within $[\underline{\nu},\overline{\nu}]$ which attracts all the bounded subsets of $\PP_2(\R)$.
\end{remark}

\begin{proof}[Proof of Corollary \ref{coro:unique-convergence}]
The uniqueness of invariant measure enatils that $\underline{\nu}=\nu=\overline{\nu}$.
Then the antisymmetry of stochastic order implies that $[\underline{\nu},\overline{\nu}]=\{\nu\}$.
Hence, this corollary is a direct consequence of Theorem \ref{thm:global-convergence} (ii).
\end{proof}

\section{Basins of attraction}\label{sec:local-convergence}

This section attempts to prove Theorem \ref{T:general V}, Theorem \ref{thm:local-convergence}, Corollary \ref{coro:segment-convergence}, Theorem \ref{thm:double-well}, Theorem \ref{thm:multi-well} and Theorem \ref{thm:double-well-vanish}.
In Section \ref{subsec:number-invariant-measure}, we prove that the number of invariant measures of the equation \eqref{eq:mvsystem} is bounded by the zero-crossing number of the function $V'$.
Using the fact that solutions can be traced backward along connecting orbits, we establish  a local attractor theorem in Section \ref{subsec:local attractor} to show that the forward limit of a connecting orbit is locally attracting.
Then, we finish the proof of our main results, especially the verification of the local dissipative condition under the assumptions of Theorem \ref{T:general V}.

\subsection{Number of invariant measures}
\label{subsec:number-invariant-measure}

In this subsection, we provide the prior information on the number of invariant measures.
We show the number of invariant measures is at most the zero-crossing number of the derivative $V'$ of confining potential, regardless of the ranges of $\theta$ and $\xs$ (see Proposition \ref{prop:number-invariant-measure}).
This helps to make our main theorems quantitative (Theorem \ref{thm:double-well}-\ref{thm:double-well-vanish}), and overcomes difficulties caused by qualitative results and missing parameter in the previous literature on phase transitions (see e.g., \cite{Alecio2023,Tugaut2014}).
One the other hand, this proposition allows $V'$ vanishes on some interval (Theorem \ref{thm:double-well-vanish}), which is out of the settings of many existing works.
Also, it provides an insight that the essential factor in the maximal number of invariant measures is the zero-crossing number of $V'$, instead of the number of simple zeros of $V'$.

Let $f:\R\to\R$ be a function.
Recall that its zero-crossing number $Z(f)$ is the maximal integer $N$ such that, there exists an increasing sequence $x_1<x_2<\cdots<x_N<x_{N+1}$ satisfying $f(x_n)f(x_{n+1})<0$ for all $n=1,2,\dots,N$.
It is easy to check that, if $V'$ has $(2n-1)$ simple zeros, then $Z(V')=2n-1$.
Next, we will see the zero-crossing number of the function $V'$ is essential to determine the number of invariant measures.
We begin with a simple case where the zero-crossing number is one.

\begin{lemma}\label{lem:unique-zero-crossing}
Suppose Assumption \ref{asp:potential} holds.
If the function $f:\R\to\R$ is continuous with polynomial growth and has zero-crossing number $Z(f)=1$, then the function $F:\R\to\R$ defined by
\[
F(m):=\int_{\R}f(x)\exp\bigg\{-2\int_0^x\frac{V'(y)+\theta y-\theta m}{\xs^2(y)}\d y\bigg\}\d x
\]
has exactly one zero.
\end{lemma}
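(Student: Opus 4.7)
The plan is to divide $F(m)$ by the positive $m$-dependent factor $g_m(x_0):=\exp\{-2\int_0^{x_0}(V'(y)+\theta y-\theta m)/\xs^2(y)\,\d y\}$, where $x_0$ is the sign-change point of $f$, and then argue that the resulting function is strictly monotone with limits at $\pm\infty$ of the correct signs. Since $Z(f)=1$, I can pick $x_0\in\R$ such that---up to replacing $f$ by $-f$, which does not affect zeros of $F$---one has $f\le0$ on $(-\infty,x_0]$ and $f\ge0$ on $[x_0,\infty)$, with $f$ not identically zero on either side. Setting $G(x):=\int_0^x\xs^{-2}(y)\,\d y$ (strictly increasing by Assumption \ref{asp:potential}(v)) and
\[
C(x):=\exp\bigg\{-2\int_{x_0}^x\frac{V'(y)+\theta y}{\xs^2(y)}\,\d y\bigg\}>0,
\]
the normalized function becomes
\[
\tilde F(m):=\frac{F(m)}{g_m(x_0)}=\int_\R f(x)\,C(x)\,e^{2\theta m(G(x)-G(x_0))}\,\d x,
\]
which has the same zeros as $F$. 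Hyper-dissipativity of $V'$ (Assumption \ref{asp:potential}(iii)) forces $V(x)$ to grow like $|x|^{2+\xd}$ at infinity, so $C(x)$ decays super-polynomially, and combined with polynomial growth of $f$ the integral converges for every $m\in\R$.

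For the core monotonicity step, fix $m_1<m_2$. For $x>x_0$ I have $G(x)-G(x_0)>0$ and $f(x)\ge0$, so the integrand at $m_2$ dominates that at $m_1$; for $x<x_0$, both $G(x)-G(x_0)$ and $f(x)$ flip sign, and the product is again pointwise larger at $m_2$. Since by $Z(f)=1$ the function $f$ is strictly nonzero on a set of positive Lebesgue measure on each side of $x_0$, these pointwise inequalities integrate to $\tilde F(m_1)<\tilde F(m_2)$. Continuity of $\tilde F$ follows from dominated convergence.

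Next I would verify $\tilde F(m)\to+\infty$ as $m\to+\infty$ and $\tilde F(m)\to-\infty$ as $m\to-\infty$. For the first: pick a compact interval $[a,b]\subset(x_0,\infty)$ on which $f>0$ on a set of positive measure; on this interval $G(x)-G(x_0)\ge G(a)-G(x_0)=:c>0$, so
\[
\int_a^b f(x)C(x)e^{2\theta m(G(x)-G(x_0))}\,\d x\ge e^{2\theta mc}\int_a^b f(x)C(x)\,\d x\longrightarrow+\infty.
\]
On $(-\infty,x_0)$, $G(x)-G(x_0)<0$, so the integrand tends to $0$ pointwise as $m\to+\infty$ and is dominated (for $m\ge m_0$) by $|f|Ce^{2\theta m_0(G-G(x_0))}$, an integrable function; dominated convergence makes this contribution vanish. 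Adding up, $\tilde F(m)\to+\infty$; the case $m\to-\infty$ is symmetric.

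Continuity, strict monotonicity, and the two limits give via the intermediate value theorem that $\tilde F$ has exactly one zero, hence so does $F$. The main technical hurdle is checking the limits together with the uniform integrability needed to justify the dominated convergence arguments; both rest on the super-polynomial decay of $C(x)$ furnished by the hyper-dissipativity assumption on $V'$, so they are manageable but require careful bookkeeping of the exponential weights.
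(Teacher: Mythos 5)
Your proof is correct and rests on the same key device as the paper's: recentering the base point of the exponential weight at the sign-change location $x_0$, which makes the weight $e^{2\theta m(G(x)-G(x_0))}$ cooperate with the sign of $f$ on each side of $x_0$. Where you and the paper diverge is in how the quantitative step is carried out: the paper computes $\tilde F'(m)$ and shows it admits a uniform positive lower bound (splitting into the cases $m\geq0$ and $m<0$ and using $\xS(x)f(x)\geq0$ to drop one half-line), which simultaneously yields strict monotonicity and the divergence $\tilde F(m)\to\pm\infty$; you instead prove strict monotonicity by direct pointwise comparison of integrands for $m_1<m_2$, and separately establish $\tilde F(m)\to\pm\infty$ by isolating a compact sub-interval where the exponential blows up and controlling the complementary half-line by dominated convergence. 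Both are sound; the paper's derivative bound is a bit slicker because one estimate does double duty, while your version is more elementary and makes the two needed ingredients (monotone, onto $\R$) explicit at the cost of two separate limiting arguments. Your remark that $Z(f)=1$ implies the single-sign-change structure $f\le0$ on $(-\infty,x_0]$, $f\ge0$ on $[x_0,\infty)$ (up to sign) is used without proof but is correct, matching what the paper also takes for granted.
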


\begin{proof}
Since $f$ has polynomial growth, $F$ is well-defined.
Given $Z(f)=1$, either case of the following two is true.
\begin{itemize}
\item There is some $x_0\in\R$ such that $f(x)\leq0$ on $(-\8,x_0)$ and $f(x)\geq0$ on $(x_0,\8)$;
\item There is some $x_0\in\R$ such that $f(x)\geq0$ on $(-\8,x_0)$ and $f(x)\leq0$ on $(x_0,\8)$.
\end{itemize}
It suffices to consider the first case, and by definition of the zero-crossing number, there are $x_-<x_0<x_+$ such that
\[
f(x_-)<0,\ \ \ f(x_+)>0.
\]
Now define another function $\tilde F:\R\to\R$ by
\[
\tilde F(m):=\int_{\R}f(x)\exp\bigg\{-2\int_{x_0}^x\frac{V'(y)+\theta y-\theta m}{\xs^2(y)}\d y\bigg\}\d x,
\]
and we first show $\tilde F$ has only one zero.
Denote $\xS(x):=\int_{x_0}^x\frac{1}{\xs^2(y)}\d y$, and by Assumption \ref{asp:potential} (v), we observe $\xS(x)<0$ on $(-\8,x_0)$, $\xS(x)>0$ on $(x_0,\8)$.
A straightforward differentiation gives
\begin{align*}
\tilde F'(m)&=\int_{\R}2\theta\xS(x)f(x)\exp\bigg\{-2\int_{x_0}^x\frac{V'(y)+\theta y-\theta m}{\xs^2(y)}\d y\bigg\}\d x\\
&=\int_{\R}2\theta\xS(x)f(x)\exp\bigg\{-2\int_{x_0}^x\frac{V'(y)+\theta y}{\xs^2(y)}\d y\bigg\}\exp\big\{2\theta m\xS(x)\big\}\d x.
\end{align*}
For $m\geq0$, since $\xS(x)f(x)\geq0$ for all $x\in\R$, we have
\begin{equation}\label{pf:unique-zero-crossing-1}
\begin{aligned}
\tilde F'(m)&\geq\int_{x_0}^{\8}2\theta\xS(x)f(x)\exp\bigg\{-2\int_{x_0}^x\frac{V'(y)+\theta y}{\xs^2(y)}\d y\bigg\}\exp\big\{2\theta m\xS(x)\big\}\d x\\
&\geq\int_{x_0}^{\8}2\theta\xS(x)f(x)\exp\bigg\{-2\int_{x_0}^x\frac{V'(y)+\theta y}{\xs^2(y)}\d y\bigg\}\d x=:\xl_+,
\end{aligned}
\end{equation}
where $\xl_+$ is a constant independent of $m$.
Noticing $x_+>x_0$, $f(x_+)>0$, $\xS(x_+)>0$ and $\xS f$ is continuous, it follows $\xl_+>0$.
And in the same manner, for $m<0$, we see
\begin{equation}\label{pf:unique-zero-crossing-2}
\tilde F'(m)\geq\int_{-\8}^{x_0}2\theta\xS(x)f(x)\exp\bigg\{-2\int_{x_0}^x\frac{V'(y)+\theta y}{\xs^2(y)}\d y\bigg\}\d x=:\xl_->0.
\end{equation}
Combining \eqref{pf:unique-zero-crossing-1} and \eqref{pf:unique-zero-crossing-2} gives, for all $m\in\R$,
\[
\tilde F'(m)\geq\min\{\xl_+,\xl_-\}>0,
\]
so the function $\tilde F$ has only one zero.
To show $F$ has exactly one zero, we just simply note that $\tilde F(m)=F(m)G(m)$ for a positive function $G$ given by
\[
G(m)=\exp\bigg\{2\int_0^{x_0}\frac{V'(y)+\theta y-\theta m}{\xs^2(y)}\d y\bigg\}.
\]
Hence, the function $F$ has only one zero as well.
\end{proof}

The following lemma extends to the general case of finite zero-crossings.

\begin{lemma}\label{lem:multiple-zero-crossing}
Suppose Assumption \ref{asp:potential} holds.
If the function $f:\R\to\R$ is continuous with polynomial growth and has zero-crossing number $Z(f)=n$, then the function $F:\R\to\R$ defined by
\[
F(m):=\int_{\R}f(x)\exp\bigg\{-2\int_0^x\frac{V'(y)+\theta y-\theta m}{\xs^2(y)}\d y\bigg\}\d x
\]
has at most $n$ zeros.
\end{lemma}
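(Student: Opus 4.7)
The plan is to run an induction on $n=Z(f)\geq 1$, taking the case $n=1$ from Lemma \ref{lem:unique-zero-crossing} as the base. Introduce the abbreviations
\[
S(x):=\int_0^x\frac{\d y}{\xs^2(y)},\qquad h(x):=\exp\bigg\{-2\int_0^x\frac{V'(y)+\theta y}{\xs^2(y)}\d y\bigg\},
\]
so that $F(m)=\int_{\R}f(x)h(x)e^{2\theta m S(x)}\d x$. By Assumption \ref{asp:potential} (v), $S$ is strictly increasing and globally Lipschitz, and $h$ is strictly positive.

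For the inductive step, suppose the claim is known for any continuous polynomial-growth function with zero-crossing number at most $n-1$, and suppose for contradiction that $f$ has $Z(f)=n$ but $F$ has $n+1$ distinct zeros $m_0<m_1<\cdots<m_n$. Since $Z(f)\geq 1$, one can choose $c\in\R$ at which $f$ genuinely changes sign, meaning every neighborhood of $c$ contains points where $f$ is strictly positive and points where it is strictly negative (its existence follows from continuity applied to any pair $x_1<x_2$ with $f(x_1)f(x_2)<0$). Define $\tilde F(m):=e^{-2\theta m S(c)}F(m)$, which vanishes exactly where $F$ does, so by Rolle's theorem $\tilde F'$ has at least $n$ zeros. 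A direct differentiation yields
\[
\tilde F'(m)=2\theta e^{-2\theta m S(c)}\int_{\R}(S(x)-S(c))f(x)\exp\bigg\{-2\int_0^x\frac{V'(y)+\theta y-\theta m}{\xs^2(y)}\d y\bigg\}\d x=2\theta e^{-2\theta m S(c)}G(m),
\]
where $G$ is the transform applied to $g(x):=(S(x)-S(c))f(x)$. Since the prefactor is strictly positive, $G$ inherits at least $n$ zeros, and because $g$ is continuous with polynomial growth (using $|S(x)|\leq|x|/\underline{\xs}^2$), the induction hypothesis would conclude the proof as soon as $Z(g)\leq n-1$ is established.

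The main obstacle is this sign-change bound for $g$, and the key point is that $c$ must be chosen at a genuine sign change of $f$ so that the new factor $S(x)-S(c)$—which itself changes sign exactly once, at $c$—cancels rather than creates a sign alternation. Concretely, assume toward contradiction that $g$ admits $N+1\geq n+1$ strictly alternating points $y_0<\cdots<y_N$; then $f(y_i)\neq 0$ and $y_i\neq c$ for each $i$. A case analysis on whether consecutive pairs $y_i,y_{i+1}$ lie on the same side of $c$ (then $S(x)-S(c)$ has constant sign there, so $f$ itself alternates on that pair) or straddle $c$ (then $f$ does not alternate on that pair) shows that $f$ alternates at all but at most one consecutive pair of the $y_i$. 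Using the sign change of $f$ at $c$, one then inserts an extra auxiliary test point $y^*$ in a one-sided neighborhood of $c$ with the appropriate sign of $f$ to repair or extend the alternation, producing $N+2$ alternating test points for $f$ and thereby $Z(f)\geq N+1\geq n+1$, contradicting $Z(f)=n$. Hence $Z(g)\leq n-1$, the induction hypothesis delivers at most $n-1$ zeros for $G$, and the contradiction with Rolle closes the argument.
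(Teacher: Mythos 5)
Your proof takes essentially the same route as the paper's: induction on the zero-crossing number, multiplication of the integrand by the sign-changing factor $S(x)-S(c)$ anchored at a sign change $c$ of $f$, and Rolle's theorem (which the paper invokes implicitly when passing from ``$\tilde F'$ has at most $n$ zeros'' to ``$\tilde F$ has at most $n+1$ zeros''). One inaccuracy worth flagging: a point $c$ at which \emph{every} neighbourhood contains points where $f>0$ and points where $f<0$ need not exist for a continuous $f$ with $Z(f)\geq 1$. For instance, take $f$ equal to $-1$ on $(-\8,-1]$, linear to $0$ on $[-1,0]$, identically $0$ on $[0,1]$, linear to $1$ on $[1,2]$, and $+1$ on $[2,\8)$: here $Z(f)=1$ but no such two-sided $c$ exists. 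What your repair step actually uses, and what does exist, is a one-sided version: fix $t_0<t_1$ with $f(t_0)>0>f(t_1)$ and set $c:=\sup\{x<t_1:f(x)>0\}$, so that $f(c)=0$, $f>0$ arbitrarily close to $c$ from the left, and $f\leq0$ on $(c,t_1]$ with $f(t_1)<0$. This provides the correct sign on the correct side in every subcase of your alternation argument (all $y_i$ on one side of $c$, or a single straddling pair with $f(y_j)$ and $f(y_{j+1})$ both positive or both negative). The paper states without proof the analogous structural claim (that $\R$ can be partitioned into $n+2$ intervals on which $f$ has alternating sign), so once your $c$ is chosen in this corrected way, your case analysis actually supplies a detail the paper leaves implicit.
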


\begin{proof}
By Lemma \ref{lem:unique-zero-crossing}, the conclusion is true for $n=1$, and we prove by induction.
Assume the conclusion is true for $n$.
If the function $f$ is continuous and $Z(f)=n+1$, then we may assume there is an increasing sequence $-\8=x_{-1}<x_0<\dots<x_n<x_{n+1}=\8$ such that, $f$ changes its sign on any two adjacent intervals $(x_{k-1},x_k)$, $k=0,1,\dots,n+1$.
Define $\tilde F:\R\to\R$ by
\[
\tilde F(m):=\int_{\R}f(x)\exp\bigg\{-2\int_{x_0}^x\frac{V'(y)+\theta y-\theta m}{\xs^2(y)}\d y\bigg\}\d x.
\]
Taking the derivative of $\tilde F$ yields
\begin{align*}
\tilde F'(m)&=\int_{\R}2\theta\xS(x)f(x)\exp\bigg\{-2\int_{x_0}^x\frac{V'(y)+\theta y-\theta m}{\xs^2(y)}\d y\bigg\}\d x\\
&=G(m)\cdot\int_{\R}2\theta\xS(x)f(x)\exp\bigg\{-2\int_0^x\frac{V'(y)+\theta y-\theta m}{\xs^2(y)}\d y\bigg\}\d x,
\end{align*}
where
\[
\xS(x)=\int_{x_0}^x\frac{1}{\xs^2(y)}\d y,\quad G(m)=\exp\bigg\{2\int_0^{x_0}\frac{V'(y)+\theta y-\theta m}{\xs^2(y)}\d y\bigg\}.
\]
Note the zero crossing number of $\xS f$ is $n$ and $G$ is positive, so $\tilde F'$ has at most $n$ zeros by the induction.
This implies $\tilde F$ has at most $(n+1)$ zeros, and since $F=G\tilde{F}$, the function $F$ has at most $(n+1)$ zeros.
We complete the proof.
\end{proof}

Now a bound of the number of invariant measures is given as below.

\begin{proposition}\label{prop:number-invariant-measure}
Suppose Assumption \ref{asp:potential} holds.
If the function $V':\R\to\R$ has zero-crossing number $Z(V')=2n-1$, then the equation \eqref{eq:mvsystem} has at most $(2n-1)$ invariant measures.
\end{proposition}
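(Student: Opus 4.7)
The plan is to use the reduction from the proof of Theorem \ref{thm:global-convergence} which identifies invariant measures of \eqref{eq:mvsystem} with zeros of the self-consistency function
\[
F(m)=\int_{\R}\frac{x-m}{\xs^2(x)}\exp\bigg\{-2\int_0^x\frac{V'(y)+\theta y-\theta m}{\xs^2(y)}\d y\bigg\}\d x,
\]
and then bound the number of zeros of $F$ by $Z(V')=2n-1$. The obstruction to applying Lemma \ref{lem:multiple-zero-crossing} directly is that the integrand $(x-m)/\xs^2(x)$ depends on the parameter $m$, so the lemma does not apply as stated. I will remove this $m$-dependence by an integration by parts.

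The key algebraic identity is
\[
\theta(x-m)=\big(V'(x)+\theta x-\theta m\big)-V'(x),
\]
which, combined with the observation
\[
\p_x\exp\bigg\{-2\int_0^x\frac{V'(y)+\theta y-\theta m}{\xs^2(y)}\d y\bigg\}=-2\,\frac{V'(x)+\theta x-\theta m}{\xs^2(x)}\exp\{\cdots\},
\]
expresses $\theta F(m)$ as the sum of a total derivative and a remainder:
\[
\theta F(m)=-\frac{1}{2}\Big[\exp\{\cdots\}\Big]_{x=-\8}^{x=\8}-\int_{\R}\frac{V'(x)}{\xs^2(x)}\exp\bigg\{-2\int_0^x\frac{V'(y)+\theta y-\theta m}{\xs^2(y)}\d y\bigg\}\d x.
\]
I expect the boundary term to vanish: by Assumption \ref{asp:potential} (iii)(v), the hyper-dissipativity $-(x-0)(V'(x)-V'(0))\leq-\xa|x|^{2+\xd}+\xb$ combined with $\underline{\xs}^2\leq\xs^2\leq\overline{\xs}^2$ forces $\int_0^x (V'(y)+\theta y-\theta m)/\xs^2(y)\,\d y\to+\8$ as $|x|\to\8$, so the exponential decays super-polynomially at both ends (this also ensures the original integral defining $F$ is absolutely convergent). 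This is the only step requiring genuine analytic care, but it is routine given the hyper-dissipative bound.

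After the boundary term is dispatched, we obtain
\[
\theta F(m)=\int_{\R}\bigg(-\frac{V'(x)}{\xs^2(x)}\bigg)\exp\bigg\{-2\int_0^x\frac{V'(y)+\theta y-\theta m}{\xs^2(y)}\d y\bigg\}\d x,
\]
which is exactly the form handled by Lemma \ref{lem:multiple-zero-crossing} with $f(x):=-V'(x)/\xs^2(x)$. This $f$ is continuous (by Assumption \ref{asp:potential} (i)(v)) and has polynomial growth (by (ii)(v)). Since $\xs^2>0$ everywhere, $f(x)$ has the same sign as $-V'(x)$ on $\R$, and therefore $Z(f)=Z(V')=2n-1$. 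Lemma \ref{lem:multiple-zero-crossing} then yields at most $2n-1$ zeros of $\theta F$, hence of $F$, which by the bijection with invariant measures completes the proof.
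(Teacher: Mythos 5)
Your proof is correct and takes essentially the same route as the paper: reduce to zeros of the self-consistency function $F$, integrate by parts using $\theta(x-m)=(V'(x)+\theta x-\theta m)-V'(x)$ and the total-derivative structure of the exponential to eliminate the $m$-dependence of the factor outside the exponential, check the boundary term vanishes via hyper-dissipativity, and apply Lemma \ref{lem:multiple-zero-crossing} with $f=-V'/\xs^2$, noting $Z(f)=Z(V')$ since $\xs^2>0$. The paper compresses all of this into the single phrase ``Use integration by parts, and we observe...''; you have merely spelled out the details, including the (correct) observation that the decay of the exponential both justifies the integration by parts and makes $F$ well-defined in the first place.
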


\begin{proof}
Consider the self-consistency function defined in \eqref{eq:self-consistency-function},
\[
F(m):=\int_{\R}\frac{x-m}{\xs^2(x)}\exp\bigg\{-2\int_0^x\frac{V'(y)+\theta y-\theta m}{\xs^2(y)}\d y\bigg\}\d x.
\]
Then the number of zeros of $F$ is exactly the number of invariant measures of the equation \eqref{eq:mvsystem}.
Use integration by parts, and we observe
\[
F(m)=-\int_{\R}\frac{V'(x)}{\theta\xs^2(x)}\exp\bigg\{-2\int_0^x\frac{V'(y)+\theta y-\theta m}{\xs^2(y)}\d y\bigg\}\d x.
\]
Assumption \ref{asp:potential} (iv)(v) tell us $\theta>0$ and $\xs^2(x)>0$ for all $x\in\R$, so $Z(V'/\theta\xs^2)=Z(V')=2n-1$.
Lemma \ref{lem:multiple-zero-crossing} implies that $F$ has at most $(2n-1)$ zeros, and thus, the equation \eqref{eq:mvsystem} has at most $(2n-1)$ invariant measures.
\end{proof}

\subsection{Local attractors of order-preserving semigroups on \texorpdfstring{$\PP_2(\R)$}{P2(R)}}
\label{subsec:local attractor}
To investigate the local dynamics near the invariant measures, we will present a theorem for order-preserving semigroups that ensures the local attraction of the forward limit  of a connecting orbit.
We first apply Proposition \ref{prop:number-invariant-measure} and the main result of \cite{Liu-Qu-Yao-Zhi2024} to obtain the existence of connecting orbits.

\begin{proof}[Proof of {\rm (i)-(iii)} in Theorem \ref{thm:local-convergence}]
By Proposition \ref{prop:number-invariant-measure}, the equation \eqref{eq:mvsystem} has at most $(2n-1)$ invariant measures.
Then, by applying \cite[Theorem 1.1]{Liu-Qu-Yao-Zhi2024}, it remains only to show the  pairwise disjointness of  $\{B(\xd_{a_k},r_k)\}_{k=1}^n$.
In fact, \eqref{eq:local-convergence-condition} implies
\begin{equation}\label{eq:new0805}
r_{i}+r_{j}\leq |a_i-a_j|, \ \text{ for all } \ 1\leq i\neq j\leq n.
\end{equation}
Suppose on the contrary that there exists $\nu\in B(\delta_{a_i},r_{i})\cap B(\delta_{a_{j}},r_{j})$. Then,
\begin{equation*}
    |a_i-a_j|=\WW_2(\delta_{a_i}, \delta_{a_j})\leq \WW_2(\delta_{a_i}, \nu)+\WW_2(\nu, \delta_{a_j})<r_i+r_j,
\end{equation*}
which gives a contradiction to \eqref{eq:new0805}.
\end{proof}

Based on the existence of connecting orbits, we give the following theorem ensuring the local attraction of the forward limit of a connecting orbit.

\begin{theorem}\label{T:locally attracting}
{\rm (Local attractor).} 
Let $P^*$ be an order-preserving semiflow on $\PP_2(\R)$.
Assume that there exist a positively invariant bounded open set $B\subset\PP_2(\R)$ and an order-bounded closed set $K\subset\PP_2(\R)$ such that $K$ attracts $B$. Assume further there is only one invariant measure $\mu_{0}$ in $\overline{B}$. 
Then the following hold.
\begin{enumerate}[label=\textnormal{(\roman*)}]
    \item If $\mu_0\geq_{\st}\omega(B)$ and there exists another invariant measure $\mu_{-1}<_{\st}\mu_0$ with an increasing connecting orbit $\{\nu^{-1,0}_t\}_{t\in\mathbb{R}}$ from $\mu_{-1}$ to $\mu_{0}$ such that $\nu^{-1,0}_{t_{-1}}\leq_{\st}\omega(B)$ for some $t_{-1}\in\mathbb{R}$, then $\mu_{0}$ attracts $B$.
    \item If $\mu_{0}\leq_{\st}\omega(B)$ and there exists another invariant measure $\mu_{1}>_{\st}\mu_{0}$ with a decreasing connecting orbit $\{\nu^{1,0}_t\}_{t\in\mathbb{R}}$ from $\mu_1$ to $\mu_{0}$ such that $\nu^{1,0}_{t_{1}}\geq_{\st}\omega(B)$ for some $t_{1}\in\mathbb{R}$, then $\mu_{0}$ attracts $B$.
    \item If there exist another two invariant measures $\mu_{-1}<_{\st}\mu_{0}$ and $\mu_{1}>_{\st}\mu_{0}$ with an increasing connecting orbit $\{\nu^{-1,0}_t\}_{t\in\mathbb{R}}$ from $\mu_{-1}$ to $\mu_{0}$ and a decreasing connecting orbit $\{\nu^{1,0}_t\}_{t\in\mathbb{R}}$ from $\mu_1$ to $\mu_{0}$ such that, $\nu^{-1,0}_{t_{-1}}\leq_{\st}\omega(B)$ for some $t_{-1}\in\mathbb{R}$ and $\nu^{1,0}_{t_{1}}\geq_{\st}\omega(B)$ for some $t_{1}\in\mathbb{R}$, then $\mu_{0}$ attracts $B$.
\end{enumerate}
\end{theorem}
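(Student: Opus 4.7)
The plan is to establish $\omega(B)=\{\mu_0\}$ in each case, from which $\WW_2(P^*_tB,\mu_0)\to0$ is immediate. To set up, I would first observe that $K$ is order-bounded and closed, so Proposition \ref{prop:order-bounded-precompact} makes $K$ compact in $\PP_2(\R)$; since $K$ attracts the bounded set $B$, Lemma \ref{L:omega-limit-set} (iii) then gives that $\omega(B)$ is nonempty, compact, invariant, contained in $K$, and still attracts $B$. Positive invariance of $B$ also yields $\omega(B)\subset\overline{B}$, so by the uniqueness hypothesis $\mu_0$ is the only invariant measure available inside $\omega(B)$ (though I will not actually need to locate $\mu_0$ there).

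The core step will be the following propagation principle. Given an entrance orbit $\{\nu_t\}_{t\in\R}$ satisfying $\nu_{t_0}\leq_{\st}\rho$ for every $\rho\in\omega(B)$, I would fix $s\geq0$ and an arbitrary $\tilde\rho\in\omega(B)$ and use invariance $P^*_s\omega(B)=\omega(B)$ to produce some $\rho\in\omega(B)$ with $P^*_s\rho=\tilde\rho$; the order-preservation of $P^*$ then forces
\[
\nu_{t_0+s}=P^*_s\nu_{t_0}\leq_{\st}P^*_s\rho=\tilde\rho,
\]
so $\nu_t\leq_{\st}\omega(B)$ for all $t\geq t_0$. Letting $t\to\infty$ along the connecting orbit and using the closedness of $\leq_{\st}$ in $\PP_2(\R)\times\PP_2(\R)$ together with the $\WW_2$-convergence of $\nu_t$ to its forward endpoint, the endpoint itself becomes a lower bound of $\omega(B)$. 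The symmetric statement (replace $\leq_{\st}$ by $\geq_{\st}$ throughout) follows by reversing signs.

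Applying this to each case is then routine. In part (iii), the increasing connecting orbit from $\mu_{-1}$ to $\mu_0$ delivers $\mu_0\leq_{\st}\omega(B)$, while the decreasing connecting orbit from $\mu_1$ to $\mu_0$ delivers $\mu_0\geq_{\st}\omega(B)$; antisymmetry of the stochastic order then forces $\omega(B)=\{\mu_0\}$. In (i) the bound $\mu_0\geq_{\st}\omega(B)$ is a direct hypothesis and the reverse inequality is produced by the increasing-orbit version of the propagation principle; (ii) is identical with the roles interchanged.

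The main obstacle, and really the only subtle point, is the implication $\nu_{t_0}\leq_{\st}\omega(B)\Rightarrow\nu_{t_0+s}\leq_{\st}\omega(B)$: this is not a direct consequence of order-preservation applied to a pointwise inequality, but depends on the surjectivity of $P^*_s$ restricted to $\omega(B)$, i.e.\ on full invariance $P^*_s\omega(B)=\omega(B)$ rather than mere positive invariance. That surjectivity is exactly what Lemma \ref{L:omega-limit-set} (iii) guarantees, so the whole argument is ultimately driven by the interplay between order-preservation of $P^*$ and the invariance of the $\omega$-limit set.
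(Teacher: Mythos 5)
Your proof is correct, and it takes a genuinely different (and more elementary) route than the paper's. The paper's proof constructs an auxiliary closed positively invariant set
\[
S:=\{\nu^{-1,0}_t:\ t\geq t_{-1}\}\cup\{\mu_{0}\}\cup\omega(B),
\]
observes that $S$ is order bounded below by $\nu^{-1,0}_{t_{-1}}$ and above by $\mu_0$, that $\mu_0$ is the unique invariant measure in $S$, and then invokes the global attractor structure theorem (Theorem \ref{T:attractor} (i)(vi), via Remark \ref{R:attractor-change space-history}) for the restricted semiflow $P^*|_S$ to conclude $\omega(S)=\{\mu_0\}$; finally $\omega(B)=\omega(\omega(B))\subset\omega(S)$ forces $\omega(B)=\{\mu_0\}$. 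You instead prove a lightweight ``propagation principle'': if $\nu_{t_0}\leq_{\st}\omega(B)$, then using the full invariance $P^*_s\omega(B)=\omega(B)$ (Lemma \ref{L:omega-limit-set} (iii)) to produce a preimage of any $\tilde\rho\in\omega(B)$, order-preservation yields $\nu_{t_0+s}\leq_{\st}\omega(B)$ for all $s\geq0$; passing to the limit $t\to\infty$ along the connecting orbit and invoking closedness of $\leq_{\st}$ gives $\mu_0\leq_{\st}\omega(B)$, and combined with the reverse bound (hypothesis in (i), or the symmetric argument in (ii)(iii)) antisymmetry pins $\omega(B)$ down to $\{\mu_0\}$. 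This sidesteps the attractor machinery of Theorem \ref{T:attractor} and the need to check that $S$ is closed, positively invariant, and order bounded. The two arguments rest on the same three ingredients -- order-preservation, invariance of $\omega(B)$, and closedness of the partial order -- but your version exposes them directly rather than routing through the global attractor theorem, and your observation that one need not locate $\mu_0$ inside $\omega(B)$ a priori (only deduce $\omega(B)\subset[\mu_0,\mu_0]$ and use nonemptiness) is a small but genuine simplification.
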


\begin{proof}
(i). By Proposition \ref{prop:order-bounded-precompact}, $K$ is compact in $\PP_2(\R)$. Since the compact set $K$ attracts $B$, Lemma \ref{L:omega-limit-set} (iii) entails that $\omega(B)$ is nonempty, compact, invariant. 
Considering the closed set
$$S:=\{\nu^{-1,0}_t:\ t\geq t_{-1}\}\cup\{\mu_{0}\}\cup\omega(B)\subset\PP_2(\R),$$
it is obvious that $S$ is positively invariant.
Since $B$ is positively invariant, we have $\omega(B)\subset\overline{B}$.
Hence, $\mu_{0}$ is the unique invariant measure in $S$.
Observing that $S$ has a lower bound $\nu^{-1,0}_{t_{-1}}\in S$ and an upper bound $\mu_{0}\in S$, Proposition  \ref{T:attractor} (i) (vi) and Remark \ref{R:attractor-change space-history} (i) entail that $\omega(S)=\{\mu_{0}\}$ is the global attractor of $P^*|_{S}$, where $P^*|_{S}$ means the semiflow $P^*$ restricted to $S$.
The invariance of $\omega(B)$ and $\omega(B)\subset S$ entail that 
$$\omega(B)=\omega(\omega(B))\subset\omega(S).$$
Thus, the nonemptiness of $\omega(B)$ implies that $\omega(B)=\{\mu_{0}\}$.
Therefore, by Lemma \ref{L:omega-limit-set} (iii), $\mu_0$ attracts $B$.
This proves (i).

(ii). Considering the semiflow $P^*$ restricted to the positively invariant closed set 
$$S:=\{\nu^{1,0}_t:\ t\geq t_{1}\}\cup\{\mu_{0}\}\cup\omega(B)\subset\PP_2(\R)$$ 
with lower bound $\mu_0\in S$ and upper bound $\nu^{1,0}_{t_1}\in S$,
the proof of (ii) is analogous to (i).

(iii). Considering the semiflow $P^*$ restricted to the positively invairant closed set 
$$S:=\{\nu^{-1,0}_t:\ t\geq t_{-1}\}\cup\{\mu_{0}\}\cup\omega(B)\cup\{\nu^{1,0}_t:\ t\geq t_{1}\}\subset\PP_2(\R)$$ 
with lower bound $\nu^{-1,0}_{t_{-1}}\in S$ and upper bound $\nu^{1,0}_{t_1}\in S$, the proof of (iii) is analogous to (i).
\end{proof}

In the following, we will give key results to verify the core condition in Theorem \ref{T:locally attracting} that, $\xo(B(\xd_{a_k},r_k))$ has upper/lower bounds in connecting orbits (Proposition \ref{pro:bounds-of-limit-set} and Proposition \ref{prop:bounds of omega set}).
In Proposition \ref{pro:bounds-of-limit-set} and Proposition \ref{prop:bounds of omega set}, we will fix the connecting orbit as the laws of the solution flow of \eqref{eq:mvsystem}, which gives a time-inhomogeneous SDE.
Noting that the solution of this SDE can be traced backward along the connecting orbit, and by the comparison theorem, we can find a measure in the connecting orbit at a point of time sufficiently close to $-\8$, which is a lower bound or upper bound of $\xo(B(\xd_{a_k},r_k))$ with respect to stochastic order.

Firstly, we need to present a comparison principle for extrinsic-law-dependent SDEs \eqref{eq:extrinsic-law-sde}.

\begin{lemma}\label{thm:ldsde-comparison}
Suppose Assumption \ref{asp:potential} holds. Let $\mu,\nu\in C([s,\8);\PP_2(\R))$ and $T>0$.
Assume
\begin{equation}\label{ineq:mulenu}
    \int_{\R}x\d \mu_t(x)\leq \int_{\R}x\d \nu_t(x), \ \text{ for all } \ t\in [s,s+T].
\end{equation}
If $\xi,\eta\in L^2(\xO,\FF_s,\bP)$ and $\LL(\xi)\leq_{\st}\LL(\eta)$, then $\LL(X_t^{s,\mu,\xi})\leq_{\st}\LL(Y_t^{s,\nu,\eta})$ for all $t\in [s,s+T]$.
\end{lemma}

\begin{proof}
    Since $\LL(\xi)\leq_{\st}\LL(\eta)$, Strassen's theorem (see Lindvall \cite[Theorem IV.2.4]{Lindvall1992} or \cite[equation (3)]{Lindvall1999}) yields that there exist $\tilde{\xi}, \tilde{\eta}\in L^2(\xO,\FF_s,\bP)$ such that
    \begin{equation}
      \LL(\tilde{\xi})=\LL(\xi), \ \ \LL(\tilde{\eta})=\LL(\eta), \ \ \text{ and } \ \ \tilde{\xi}\leq \tilde{\eta}, \ \text{ $\mathbf{P}$-a.s.}
    \end{equation}
    It suffices to show that $\LL(X_t^{s,\mu,\tilde{\xi}})\leq_{\st}\LL(Y_t^{s,\nu,\tilde{\eta}})$ for all $t\in [s,s+T]$.

    Note that 
    \begin{equation}\label{eq:ldsde-comparison}
    \begin{aligned}
    \d X^{s,\mu,\tilde{\xi}}_t&=\Big[-V'(X^{s,\mu,\tilde{\xi}}_t)-\theta X^{s,\mu,\tilde{\xi}}_t+\theta \int_{\R}x\d \mu_t(x)\Big]\d t+\xs(X^{s,\mu,\tilde{\xi}}_t)\d B_t;\\
    \d Y^{s,\nu,\tilde{\eta}}_t&=\Big[-V'(Y^{s,\nu,\tilde{\eta}}_t)-\theta Y^{s,\nu,\tilde{\eta}}_t+\theta \int_{\R}x\d \nu_t(x)\Big]\d t+\xs(Y^{s,\nu,\tilde{\eta}}_t)\d B_t.
    \end{aligned}
    \end{equation}
    Since $\theta>0$, according to the comparison theorem for usual SDEs \cite[Theorem 1.2]{Geib1994}, we have
    \begin{equation*}
        X_t^{s,\mu,\tilde{\xi}}\leq Y^{s,\nu,\tilde{\eta}}_t \ \text{ for all } \ t\in [s,s+T], \text{ $\mathbf{P}$-a.s.} 
    \end{equation*}
    The desired result is proved.
\end{proof}

\begin{proposition}
\label{pro:bounds-of-limit-set}
    Suppose Assumption \ref{asp:potential} holds. Assume that a bounded subset $B$ in $\PP_2(\R)$ is positively invariant under $P^*$ and there exists a connecting orbit $\{\nutd_t\}_{t\in \R}$ from an invariant measure $\nu$ to another invariant measure $\pi$. If 
    \begin{equation}\label{ineq: 0502}
        \int_{\R}x\d \nu(x)<\inf_{\mu\in B}\int_{\R}x\d \mu(x), \ \ \Big(resp. \ \int_{\R}x\d \nu(x)>\sup_{\mu\in B}\int_{\R}x\d \mu(x)\Big),
    \end{equation}
    then there exists $t_0\in \R$ such that $\nutd_t\leq_{\st} \omega(B)$ $($resp. $\nutd_t\geq_{\st} \omega(B)${$)$} for all $t\leq t_0$.
\end{proposition}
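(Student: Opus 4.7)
The plan is to compare the connecting orbit $\{\nutd_t\}$ with trajectories starting in $B$ via the extrinsic-law-dependent comparison theorem (Theorem \ref{thm:ldsde-comparison}), and then pass to the limit using the entrance-measure characterization from Lemma \ref{lem:time-inhomogeneous}. I focus on the first case; the second is symmetric.

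Set $m_\nu := \int_\R x \d\nu(x)$ and $m_B := \inf_{\mu \in B} \int_\R x \d\mu(x)$, so $m_\nu < m_B$ by hypothesis. Since $\nutd_t \to \nu$ in $\WW_2$ as $t \to -\infty$ and the first moment is $\WW_2$-continuous, there exists $t_0 \in \R$ with $\int_\R x \d\nutd_t(x) < m_B$ for every $t \leq t_0$. The positive invariance of $B$ under $P^*$ gives $\int_\R x \d P_s^*\mu(x) \geq m_B$ for all $\mu \in B$ and $s \geq 0$. Fix $\nu^\infty \in \omega(B)$ and, by Lemma \ref{L:omega-limit-set} (i), choose $\mu_k \in B$ and $t_k \to \infty$ with $\WW_2(P_{t_k}^*\mu_k, \nu^\infty) \to 0$.

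Fix $t \leq t_0$ and set $s_k := t - t_k$, so $s_k \to -\infty$. On the interval $[s_k, t]$, consider two extrinsic-law-dependent SDEs of the form \eqref{eq:extrinsic-law-sde}, both started with law $\mu_k$ at time $s_k$: one driven by the fixed family $r \mapsto \nutd_r$, the other by $r \mapsto P_{r-s_k}^*\mu_k$. By the McKean-Vlasov self-consistency, the second has time-$t$ law $P_{t-s_k}^*\mu_k = P_{t_k}^*\mu_k$; denote the time-$t$ law of the first by $\eta_k$. For every $r \in [s_k, t]$ we have $r \leq t_0$ and $r - s_k \geq 0$, hence $\int_\R x \d\nutd_r(x) < m_B \leq \int_\R x \d P_{r-s_k}^*\mu_k(x)$. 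Theorem \ref{thm:ldsde-comparison}, applied with identical initial data $\mu_k$ on both sides, then yields $\eta_k \leq_{\st} P_{t_k}^*\mu_k$.

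Because $\{\nutd_r\}_{r\in\R}$ is itself the entrance measure of the time-inhomogeneous SDE with coefficient family $\{\nutd_r\}$, Lemma \ref{lem:time-inhomogeneous} (i) identifies $\nutd_t$ as the entrance measure value at time $t$ and supplies the bound
\begin{equation*}
d_2(\eta_k, \nutd_t) \leq C\bigl(1 + \|\mu_k\|_2^2\bigr) e^{-\lambda(t - s_k)} = C\bigl(1 + \|\mu_k\|_2^2\bigr) e^{-\lambda t_k}.
\end{equation*}
Since $B$ is bounded in $\PP_2(\R)$, one has $\sup_k \|\mu_k\|_2 < \infty$, so $d_2(\eta_k, \nutd_t) \to 0$, which (via control of both $|\cdot|^2$-moments and weak convergence) implies $\WW_2(\eta_k, \nutd_t) \to 0$. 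Letting $k \to \infty$ and invoking the closedness of $\leq_{\st}$ in $\PP_2(\R) \times \PP_2(\R)$, we obtain $\nutd_t \leq_{\st} \nu^\infty$; since $\nu^\infty \in \omega(B)$ and $t \leq t_0$ were arbitrary, $\nutd_t \leq_{\st} \omega(B)$ for all $t \leq t_0$. The main technical subtlety is that the initial law $\mu_k$ varies with $k$ while we let $s_k \to -\infty$; this is absorbed by the quantitative estimate of Lemma \ref{lem:time-inhomogeneous}, whose prefactor depends only on $\|\mu_k\|_2^2$ (uniformly bounded on $B$) and whose rate depends only on $t - s_k = t_k \to \infty$.
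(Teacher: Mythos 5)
Your proof is correct and follows essentially the same route as the paper: both fix $t\leq t_0$, run the two extrinsic-law-dependent SDEs from time $t-t_k$ with the same initial law drawn from $B$, apply Theorem \ref{thm:ldsde-comparison} using \eqref{eq:0803-1} and positive invariance of $B$, and pass to the limit via Lemma \ref{lem:time-inhomogeneous} and the closedness of the stochastic order. The only differences are notational ($\mu_k,\nu^\infty$ versus $\rho_k,\rho$), so there is nothing to add.
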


\begin{proof}
    We first prove the ``$<$" case in \eqref{ineq: 0502}, and the case ``$>$" can be proved similarly. Now consider the following time-inhomogeneous SDE
    \begin{equation}\label{SDE-0502}
        \d X_t=-V'(X_t)\d t-(W'*\nutd_t)(X_t)\d t+\xs(X_t)\d B_t.
    \end{equation}
    Let $X_t^{s,\mu}$ be the unique solution of equation \eqref{SDE-0502} at time $t$ with starting time $s$ and initial distribution $\mu$ and $P^*(t,s)$ be the dual semigroup of equation \eqref{SDE-0502}, i.e., $P^*(t,s)\mu:=\LL(X_t^{s,\mu})$. Since $\{\nutd_t\}_{t\in \R}$ is a connecting orbit, then Lemma \ref{lem:time-inhomogeneous} (i) yields that it is an entrance measure of  equation \eqref{SDE-0502}, i.e., $P^*(t,s)\nutd_s=\nutd_t$ for all $t\geq s$.
    Note that $\sup_{t\in \R}\|\nutd_t\|_2<\infty$ (by definition, the connecting orbit is bounded in $\PP_2(\R)$), by Lemma \ref{lem:time-inhomogeneous}, we have there exist $C>0, \lambda>0$ such that for any $\mu\in \PP_2(\R)$ and $t\geq s$,
    \begin{equation}\label{eq:0806-1}
        d_2\big(\LL(X_t^{s,\mu}), \nutd_t\big)\leq C(1+\|\mu\|_2^2)e^{-\lambda (t-s)},
    \end{equation}
     where $d_2$ is given 
     by
        \begin{equation}\label{def:d2-distance}
        d_2(\nu_1,\nu_2)=\int_{\R}(1+|x|^2)\d |\nu_1-\nu_2|(x).
    \end{equation}
    According to \cite[Theorem 6.15]{Villani2009}, we conclude that
    \begin{equation*}
        \WW_2\big(\LL(X_t^{s,\mu}), \nutd_t\big)\leq \sqrt{2d_2\big(\LL(X_t^{s,\mu}), \nutd_t\big)}\leq \sqrt{2C(1+\|\mu\|_2^2)e^{-\lambda (t-s)}}, \ \text{ for all } \ \mu\in\PP_2(\R), \ t\geq s.
    \end{equation*}
    Hence, for all bounded sets $\{\mu_s\}_{s\leq t}$ in $\PP_2(\R)$, we have
     \begin{equation}\label{eq:0806-4}
         \lim_{s\to -\8} \WW_2\big(\LL(X_t^{s,\mu_s}), \nutd_t\big)=0, \ \text{ for any bounded set $\{\mu_s\}_{s\leq t}$ in $\PP_2(\R)$}.
     \end{equation}
    Note that $\{\nutd_t\}_{t\in \R}$ is a connecting orbit from $\nu$ to $\pi$ in $\PP_2(\R)$, then
    \[
    \limsup_{t\to -\infty}\Big|\int_{\R}x\d\nutd_t(x)-\int_{\R}x\d\nu(x)\Big|\leq \limsup_{t\to -\infty}\WW_1(\nutd_t,\nu)\leq \lim_{t\to -\infty}\WW_2(\nutd_t,\nu)=0.
    \]
    Hence, by \eqref{ineq: 0502}, there exists $t_0\in \R$ such that 
    \begin{equation}\label{eq:0803-1}
        \int_{\R}x\d \nutd_t(x)<\inf_{\mu\in B}\int_{\R}x\d \mu(x), \ \text{ for all } \ t\leq t_0.
    \end{equation}

    Next, we prove that $\nutd_t\leq_{\st} \omega(B)$ for all $t\leq t_0$.
    By Theorem \ref{thm:global-convergence} (ii), the order interval $[\underline{\nu},\overline{\nu}]$ attracts the bounded set $B$.
    Then Proposition \ref{prop:order-bounded-precompact} and Lemma \ref{L:omega-limit-set} (iii) tell us $\xo(B)$ is nonempty.
    For any $\rho\in \omega(B)$, it follows from Lemma \ref{L:omega-limit-set} (i) that there exist $\{(t_k, \rho_k)\}_{k\geq 1}\subset \R^+\times B$ such that $t_k\to \infty$ as $k\to \infty$ and 
    \begin{equation}\label{eq:1026-1}
       \lim_{k\to \infty}\WW_2(P^*_{t_k}\rho_k, \rho)=0.
    \end{equation}
    Fix $t\leq t_0$, and we consider the following two SDEs starting from $t-t_k$:
    \begin{equation*}
    \begin{aligned}
    \d X_s&=-V'(X_s)\d s-(W'*\nu_s)(X_s)\d s+\xs(X_s)\d B_s,& \LL(X_{t-t_k})&=\rho_k,\\
    \d Y_s&= -V'(Y_s)\d t-(W'*P^*_{s-t+t_k}\rho_k)(Y_s)\d s+\xs(Y_s)\d B_s,& \LL(Y_{t-t_k})&=\rho_k.
    \end{aligned}
    \end{equation*}
    Note that $\rho_k\in B$ and $B$ is positively invariant, then Assumption \ref{asp:potential} (iv), \eqref{eq:0803-1} and Lemma \ref{thm:ldsde-comparison} yield
    \begin{equation*}
        \LL(X_{t}^{t-t_k,\rho_k})\leq_{\st} \LL(Y_{t}^{t-t_k,\rho_k})=P^*_{t_k}\rho_k.
    \end{equation*}
    On the other hand, \eqref{eq:0806-4} and boundedness of $B$ imply $\WW_2(\LL(X_{t}^{t-t_k,\rho_k}),\nutd_t)\to0$ as $k\to\8$. Thus, \eqref{eq:1026-1} and the closedness of the stochastic order give $\nutd_t\leq_{\st} \rho$. Hence, $\nutd_t\leq_{\st} \omega(B)$ as $\rho\in \omega(B)$ is arbitrary.
\end{proof}

Now for any fixed $\mu\in \PP_2(\R)$, we consider the following SDE:
\begin{equation}\label{SDE 1}
\d X_t=-V'(X_t)\d t-(W'*\mu)(X_t)\d t+\xs(X_t)\d B_t.
\end{equation}
Under Assumption \ref{asp:potential}, we have SDE \eqref{SDE 1} has a unique invariant measure $\rho^{\mu}\in \PP_{2}(\R)$ (see e.g., Meyn-Tweedie \cite{Meyn-Tweedie1993-book,Meyn-Tweedie1993} or Lemma \ref{lem:time-inhomogeneous} (ii)). Define a measure-iterating map $\Psi: \PP_2(\R)\to \PP_2(\R)$ by
\begin{equation}\label{eq:psi}
    \Psi(\mu)=\rho^{\mu} \ \text{(the unique invariant measure of SDE \eqref{SDE 1})}.
\end{equation}
According to \cite[(67)]{Liu-Qu-Yao-Zhi2024}, we have the following property for $\Psi$.
\begin{lemma}\label{thm:Psi-invariant-neighbourhood}
    Suppose that Assumption \ref{asp:potential} holds. If the equation \eqref{eq:mvsystem} is locally dissipative at $a\in \R$ with configuration $(r_a,g_a)$, then we have $\Psi \big(\overline{B(\delta_a,r_a)}\big)\subset B(\delta_a,r_a)$.
\end{lemma}

\begin{remark}\label{rem:no-invariant-measure}
    Note that $\mu$ is an invariant measure of the equation \eqref{eq:mvsystem} if and only if $\Psi(\mu)=\mu$. Then Lemma \ref{thm:Psi-invariant-neighbourhood} implies that there is no invariant measure in $\partial \overline{B(\delta_a,r_a)}:=\overline{B(\delta_a,r_a)}\setminus B(\delta_a,r_a)$.
\end{remark}

\begin{proposition}\label{prop:bounds of omega set}
    Assume all assumptions in Theorem \ref{thm:local-convergence} hold. Let $\{\nu^{k,\uparrow}_t\}_{t\in \R}$ {$($}resp. $\{\nu^{k,\downarrow}_t\}_{t\in \R}${$)$} be the increasing {$($}resp. decreasing{$)$} connecting orbit from $\nu_{2k}$ to $\nu_{2k+1}$ {$($}resp. $\nu_{2k-1}${$)$} in Theorem \ref{thm:local-convergence} (iii). Then there exists $t_k\in \R$ such that, for all $t\leq t_k$,
    \begin{equation*}
        \nu^{k,\uparrow}_{t}\leq_{\st} \omega\big(B(\delta_{a_{k+1}},r_{k+1})\big), \  \text{ {$\big($}resp. $\nu^{k,\downarrow}_{t}\geq_{\st} \omega\big(B(\delta_{a_k},r_k)\big)${$\big)$}}. 
    \end{equation*}
\end{proposition}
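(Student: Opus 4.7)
The plan is to argue that $\omega(B(\delta_{a_{k+1}}, r_{k+1})) = \{\nu_{2k+1}\}$, which reduces the proposition to a triviality: since $\{\nu_t^{k,\uparrow}\}_{t\in\R}$ is an increasing connecting orbit terminating at $\nu_{2k+1}$, one has $\nu_t^{k,\uparrow} \leq_{\st} \nu_{2k+1}$ for every $t \in \R$ by definition, so any choice of $t_k$ works. The decreasing case $\omega(B(\delta_{a_k}, r_k)) = \{\nu_{2k-1}\}$ is entirely symmetric.

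First, I would show that $\nu_{2k+1}$ is the unique invariant measure in $\overline{B(\delta_{a_{k+1}}, r_{k+1})}$. Remark~\ref{rem:no-invariant-measure} rules out invariant measures on the boundary, so the analysis reduces to the open ball. By Theorem~\ref{thm:local-convergence}~(i)--(ii), the invariant measures form the totally ordered list $\nu_1 <_{\st} \cdots <_{\st} \nu_{2n-1}$. The odd-indexed $\nu_{2j-1}$ with $j \neq k+1$ lie in pairwise disjoint balls $B(\delta_{a_j}, r_j)$, hence outside. For the even-indexed $\nu_{2j}$, the cases $j \in \{k, k+1\}$ are covered directly by \eqref{eq:nu notin ball}. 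For $j \leq k-1$, transitivity yields $\nu_{2j} <_{\st} \nu_{2k-1}$ strictly, which via the identity $\int_{\R} x\d\nu - \int_{\R} x\d\mu = \int_{\R} [\nu((t,\8)) - \mu((t,\8))]\d t$ and right-continuity of survival functions gives $\int_{\R} x\d\nu_{2j} < \int_{\R} x\d\nu_{2k-1} < a_k + r_k \leq a_{k+1} - r_{k+1}$, where the final estimate comes from \eqref{eq:local-convergence-condition} as in the proof of Theorem~\ref{thm:local-convergence}~(i). This excludes $\nu_{2j}$ from $B(\delta_{a_{k+1}}, r_{k+1})$. The case $j \geq k+2$ is symmetric via $\nu_{2k+3} \in B(\delta_{a_{k+2}}, r_{k+2})$.

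Next, I would apply Theorem~\ref{T:attractor}~(vi) in the restricted setting of Remark~\ref{R:attractor-change space-history}~(i) to the semiflow on $\overline{B(\delta_{a_{k+1}}, r_{k+1})}$, which is closed and positively invariant by Theorem~\ref{Thm: Pt invariant set} together with continuity of $P_t^*$. The required order-bounded, closed, eventually positively invariant attracting set can be taken as $K \cap \overline{B(\delta_{a_{k+1}}, r_{k+1})}$, where $K$ is the set from Theorem~\ref{thm:mvsde-attractor}; all properties transfer from $K$ and the positive invariance of the closed ball by elementary set-theoretic arguments. Given uniqueness of $\nu_{2k+1}$ within this restricted phase space, Theorem~\ref{T:attractor}~(vi) declares $\{\nu_{2k+1}\}$ to be the global attractor of the restricted semiflow, yielding $\omega(B(\delta_{a_{k+1}}, r_{k+1})) = \{\nu_{2k+1}\}$ as required.

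The main obstacle will be the case analysis in the uniqueness step, where strict stochastic ordering must be converted into strict comparison of first moments and then chained through the separation condition \eqref{eq:local-convergence-condition}. A direct application of Proposition~\ref{pro:bounds-of-limit-set} with $B = B(\delta_{a_{k+1}}, r_{k+1})$ would instead demand $\int_{\R} x\d\nu_{2k}(x) < a_{k+1} - r_{k+1}$, which cannot be derived from the abstract hypotheses alone (a symmetric landscape can place $\nu_{2k}$ with mean at the midpoint of $[a_k, a_{k+1}]$), which motivates the detour through uniqueness of invariant measures inside the ball.
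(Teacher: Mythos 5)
Your proposal has a genuine gap, located precisely where you claim properties \emph{``transfer by elementary set-theoretic arguments''}. To invoke Theorem~\ref{T:attractor}(vi) on the restricted phase space $\overline{B(\delta_{a_{k+1}},r_{k+1})}$ in the sense of Remark~\ref{R:attractor-change space-history}(i), the proof of Theorem~\ref{T:attractor}(ii) requires a measure $\mu_2\in\overline{B(\delta_{a_{k+1}},r_{k+1})}$ with $\mu_2\geq_{\st} K\cap\overline{B(\delta_{a_{k+1}},r_{k+1})}$, so that the forward orbit $P_t^*\mu_2$ remains in the restricted space and is attracted to the restricted attractor. Such a $\mu_2$ need not exist: the Wasserstein ball $\overline{B(\delta_{a_{k+1}},r_{k+1})}$ is not order-bounded within itself, and $K\cap\overline{B(\delta_{a_{k+1}},r_{k+1})}$ typically contains a one-parameter family of measures whose pointwise supremum of survival functions has $\WW_2$-distance to $\delta_{a_{k+1}}$ exceeding $r_{k+1}$ (the constraint $\int|x-a_{k+1}|^2\d\mu\leq r_{k+1}^2$ permits small masses arbitrarily far out, and combining them forces the candidate upper bound to have too large a second moment). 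This is exactly the obstacle the paper circumvents by proving Proposition~\ref{prop:bounds of omega set} first and then invoking Theorem~\ref{T:locally attracting}: there the restricted set $S$ is built from a tail of a connecting orbit together with $\{\nu_{2k-1}\}$ and $\omega(B)$, so that the order bounds of $S$ (a point on the connecting orbit and the invariant measure itself) lie \emph{inside} $S$ by construction. Your reversal of the logical order --- prove $\omega(B)=\{\nu_{2k+1}\}$ first --- thus cannot be carried out by the route you sketch.

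Separately, your stated reason for abandoning the direct route via Proposition~\ref{pro:bounds-of-limit-set} is incorrect. You claim the mean separation $\int_{\R}x\,\d\nu_{2k}(x)<a_{k+1}-r_{k+1}$ ``cannot be derived from the abstract hypotheses alone,'' but the paper derives exactly this. The measure-iterating map $\Psi$ of \eqref{eq:psi} satisfies $\Psi(\mu)=\mu_{m}$ whenever $\int x\,\d\mu=m$, and Theorem~\ref{thm:Psi-invariant-neighbourhood} gives $\Psi\big(\overline{B(\delta_{a_{k+1}},r_{k+1})}\big)\subset B(\delta_{a_{k+1}},r_{k+1})$. If $m_{2k}=\int x\,\d\nu_{2k}$ lay in $[a_{k+1}-r_{k+1},a_{k+1}+r_{k+1}]$, then picking $\mu\in\overline{B(\delta_{a_{k+1}},r_{k+1})}$ with mean $m_{2k}$ would force $\nu_{2k}=\mu_{m_{2k}}=\Psi(\mu)\in B(\delta_{a_{k+1}},r_{k+1})$, contradicting \eqref{eq:nu notin ball}; combining with $\nu_{2k}<_{\st}\nu_{2k+1}$ yields $m_{2k}<a_{k+1}-r_{k+1}$. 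Your symmetric-landscape worry is foreclosed by the locally dissipative hypotheses, which ensure the $\Psi$-map contracts the closed ball into the open one. So the paper does take the ``direct'' route you discarded, and that route works; the detour through uniqueness of invariant measures, while correct as far as it goes, leads into the order-boundedness trap described above.
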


\begin{proof}
    We only prove there is some $t_k\in\R$ such that $\nu^{k,\uparrow}_{t}\leq_{\st} \omega\big(B(\delta_{a_{k+1}},r_{k+1})\big)$ for all $t\leq t_k$, and the proof of the other case is similar. 
    Note that $B(\delta_{a_{k+1}},r_{k+1})$ is positively invariant under $P^*$, by virtue of Proposition \ref{pro:bounds-of-limit-set}, it suffices to show that
    \[
    \int_{\R}x\d \nu_{2k}(x)<\inf\Big\{\int_{\R}x\d \mu(x): \mu\in B(\delta_{a_{k+1}},r_{k+1})\Big\}.
    \]
    Let $\{\mu_m\}_{m\in \R}$ be probability measures given in \eqref{eq:density-function}. According to \eqref{pf:self-consistency}, a measure $\mu$ is an invariant measure of \eqref{eq:mvsystem} if and only if $\mu=\mu_{m}$ and $\int x\d \mu_m(x)=m$ for some $m\in \R$. By (i) of Theorem \ref{thm:local-convergence}, there exist $m_1<m_2<\cdots<m_{2n-1}$ such that 
    \begin{equation}\label{eq:invariant-nu-to-mu_m}
        \nu_i=\mu_{m_i}, \ \text{ and } \ \int_{\R}x\d\nu_i(x)=m_i, \ \text{ for all } \ i=1,2,\cdots,2n-1.
    \end{equation}
    We have the following claim.
    
    \noindent\textbf{Claim:} For all $k=1,2,\cdots,n-1$, $a_k+r_k<m_{2k}<a_{k+1}-r_{k+1}$.

    \noindent\textbf{Proof of Claim:} We only prove $m_{2k}<a_{k+1}-r_{k+1}$, and the other one can be obtained similarly. 

    We first show that $m_{2k}\notin [a_{k+1}-r_{k+1},a_{k+1}+r_{k+1}]$. Otherwise, since
    \begin{equation*}
        \bigg\{\int_{\R}x\d\mu(x): \mu\in \overline{B(\delta_{a_{k+1}},r_{k+1})}\bigg\}=[a_{k+1}-r_{k+1},a_{k+1}+r_{k+1}],
    \end{equation*}
    there exists $\mu\in \overline{B(\delta_{a_{k+1}},r_{k+1})}$ such that $\int x\d\mu(x)=m_{2k}$. By the definition of $\Psi$ (see \eqref{eq:psi}) and Lemma \ref{thm:Psi-invariant-neighbourhood}, it follows that $\Psi(\mu)=\mu_{m_{2k}}=\nu_{2k}\in \Psi(\overline{B(\delta_{a_{k+1}},r_{k+1})})\subset B(\delta_{a_{k+1}},r_{k+1})$. This gives a contradiction to \eqref{eq:nu notin ball}. Hence $m_{2k}\notin [a_{k+1}-r_{k+1},a_{k+1}+r_{k+1}]$.

    Note also, by Theorem \ref{thm:local-convergence} (i)(ii) that $\nu_{2k}<_{\st}\nu_{2k+1}$ and $\nu_{2k+1}\in B(\delta_{a_{k+1}},r_{k+1})$, we conclude that 
    \[
    m_{2k}=\int x\d \nu_{2k}(x)\leq\int x\d \nu_{2k+1}(x)\leq a_{k+1}+r_{k+1}.
    \]
    This together with $m_{2k}\notin [a_{k+1}-r_{k+1},a_{k+1}+r_{k+1}]$ yields $m_{2k}<a_{k+1}-r_{k+1}$. The Claim is proved.

    By Claim, it follows
    \[
    m_{2k}=\int_{\R}x\d \nu_{2k}(x)<\inf\Big\{\int_{\R}x\d \mu(x): \mu\in \overline{B(\delta_{a_{k+1}},r_{k+1})}\Big\}=a_{k+1}-r_{k+1}.
    \]
    This finishes the proof.
\end{proof}

\subsection{Proof of Main Theorems}

\label{subsec:proof-local-convergence}

We have obtained Theorem \ref{thm:global-convergence} and Corollary \ref{coro:unique-convergence} in Section \ref{subsec:proof-global-convergence}, and proved (i)–(iii) of Theorem \ref{thm:local-convergence} in Section \ref{subsec:local attractor}.
In this section, we complete the proof of the remaining main theorems and conclude with a conjecture on a saddle-point structure in the double-well granular media equation.

\begin{proof}[Proof of \textnormal{(iv)(v)} in Theorem \ref{thm:local-convergence}]
(iv). For $k=1$, according to the statement (ii), the open ball $B(\xd_{a_1},r_1)$ is positively invariant under $P^*$.
By Theorem \ref{thm:local-convergence} (i)(ii) and Remark \ref{rem:no-invariant-measure}, there is only one invariant measure $\nu_1$ in $\overline{B(\xd_{a_1},r_1)}$.
Theorem \ref{thm:global-convergence} (ii) and Theorem \ref{thm:local-convergence} (i) entail that $[\nu_1,\nu_{2n-1}]$ attracts $B(\xd_{a_1},r_1)$ and hence $\xo(B(\xd_{a_1},r_1))\subset[\nu_1,\nu_{2n-1}]$.
Then $\nu_1\leq_{\st}\xo(B(\xd_{a_1},r_1))$.
Let $\{\nu_t^{1,\downarrow}\}_{t\in\R}$ be a decreasing connecting orbit from $\nu_2$ to $\nu_1$ given in Theorem \ref{thm:local-convergence} (iii).
Proposition \ref{prop:bounds of omega set} shows that there exists $t_1\in \R$ such that $\nu_{t_1}^{1,\downarrow}\geq_{\st} \xo(B(\xd_{a_1},r_1))$.
Note that, by \cite[Theorem 3.5]{Liu-Qu-Yao-Zhi2024}, the semigroup $P_t^*:\PP_2(\R)\to\PP_2(\R)$ generated by the equation \eqref{eq:mvsystem} is an order-preserving semiflow.
Then Theorem \ref{T:locally attracting} (ii) implies that $\nu_1$ attracts $B(\xd_{a_1},r_1)$ under $P^*$, which means $\sup_{\mu\in B(\xd_{a_k},r_k)}\WW_2(P_t^*\mu,\nu_1)\to0$ as $t\to\8$.

For $k=2,3,\dots,n-1$, let $\{\nu_t^{k,\downarrow}\}_{t\in\R}$ be a decreasing connecting orbit from $\nu_{2k}$ to $\nu_{2k-1}$ and $\{\nu_t^{k,\uparrow}\}_{t\in\R}$ be an increasing connecting orbit from $\nu_{2k-2}$ to $\nu_{2k-1}$ given in Theorem \ref{thm:local-convergence} (iii).
It follows from the statement (ii) that $B(\xd_{a_k},r_k)$ is positively invariant under $P^*$.
By Theorem \ref{thm:local-convergence} (i)(ii) and Remark \ref{rem:no-invariant-measure}, there is only one invariant measure $\nu_{2k-1}$ in $\overline{B(\xd_{a_k},r_k)}$.
We also see the order interval $[\nu_1,\nu_{2n-1}]$ attracts $B(\xd_{a_k},r_k)$ by Theorem \ref{thm:global-convergence} (ii) and Theorem \ref{thm:local-convergence} (i).
By Proposition \ref{prop:bounds of omega set}, we can find some $t_k\in\R$ such that $\nu_{t_k}^{k,\uparrow}\leq_{\st}\xo(B(\xd_{a_k},r_k))\leq_{\st}\nu_{t_k}^{k,\downarrow}$.
Theorem \ref{T:locally attracting} (iii) suggests that $\sup_{\mu\in B(\xd_{a_k},r_k)}\WW_2(P_t^*\mu,\nu_{2k-1})\to0$ as $t\to\8$.

The last case is $k=n$.
We just note that $\nu_{2n-1}\geq_{\st}\xo(B(\xd_{a_n},r_n))$ by Theorem \ref{thm:global-convergence} (ii) and Theorem \ref{thm:local-convergence} (i).
Then Theorem \ref{T:locally attracting} (i) shows the result valid as in the proof of case $k=1$.

(v). Noticing the conditions in Proposition  \ref{T:attractor} are verified in the proof of Theorem \ref{thm:global-convergence}, we apply Proposition \ref{T:attractor} (iv) to finish the proof.
If $\mu\leq_{\st}\nu$ for some $\nu\in B(\xd_{a_1},r_1)$, then from Proposition  \ref{T:attractor} (iv), we have $\WW_2(P_t^*\mu,\nu_1)\to0$.
If $\mu\geq_{\st}\nu$ for some $\nu\in B(\xd_{a_n},r_n)$, the convergence result follows as well.

Now, it remains to show the basin of attraction of $\nu_1$ (resp. $\nu_{2n-1}$) contains $\cup_{a\leq0}B(\xd_{a_1+a},r_1)$ (resp. $\cup_{a\geq0}B(\xd_{a_n+a},r_n)$).
We only need to show for any $\mu\in\cup_{a\leq0}B(\xd_{a_1+a},r_1)$, there exists $\nu\in B(\xd_{a_1},r_1)$ such that $\mu\leq_{\st}\nu$.
And the other case of $\nu_{2n-1}$ can be proved similarly.
For $a\in\R$, let $\xt^a$ be the translation map by $a$ on $\R$, say $\xt^a(x):=x+a$.
Let $\xt_{\#}^a$ be the pushforward of $\xt^a$ acting on probability measures, that is, $\d(\xt_{\#}^a\nu)(x):=\d\nu(x-a)$.
Then it follows $\xt_{\#}^a\nu\leq_{\st}\nu$ for $a\leq0$, and $\xt_{\#}^a\nu\geq_{\st}\nu$ for $a\geq0$.
Note $\xt_{\#}^a(B(\xd_{a_1},r_1))=B(\xd_{a_1+a},r_1)$, and hence, for any $\mu\in B(\xd_{a_1+a},r_1)$ with $a\leq0$, there exists $\nu\in B(\xd_{a_1},r_1)$ such that $\mu=\xt_{\#}^a\nu\leq_{\st}\nu$.
\end{proof}

\begin{proof}[Proof of Corollary \ref{coro:segment-convergence}]
    (i). This follows from \eqref{eq:invariant-nu-to-mu_m}. 

    (ii). Note from \eqref{eq:m-sde} that for any $m\in \R$, $\mu_m$ is the unique invariant measure of the following SDE:
    \begin{equation}\label{SDE-0805}
        \d X^m_t=\big(-V'(X^m_t)-\theta X^m_t+\theta m\big)\d t+\xs(X^m_t)\d B_t.
    \end{equation}
    We prove (ii) in the following three cases.

    \noindent\textbf{Case $k=1$:} Let $\{\nu^{1,\downarrow}_t\}_{t\in \R}$ be the decreasing connecting orbit from $\nu_2=\mu_{m_2}$ to $\nu_1$ in Theorem \ref{thm:local-convergence} (iii). Then $\WW_2(\nu_t^{1,\downarrow},\mu_{m_2})\to0$ as $t\to-\8$.
    Fixing any $m\in (-\8, m_2)$, we conclude that there exists $t_0\in \R$ such that
    \begin{equation*}
        \alpha^{1,\downarrow}_t:=\int_{\R}x\d \nu^{1,\downarrow}_t(x)\geq m, \ \text{ for all } \ t\leq t_0.
    \end{equation*}
    Similar to \eqref{SDE-0502}, consider the following time-inhomogeneous SDE:
    \begin{equation}\label{SDE-0805-1}
        \d Y_t=\big(-V'(Y_t)-\theta Y_t+\theta\alpha^{1,\downarrow}_t\big)\d t+\xs(Y_t)\d B_t.
    \end{equation}
    Then it is easy to check that $\{\nu^{1,\downarrow}_t\}_{t\in \R}$ is an entrance measure of \eqref{SDE-0805-1}. Let $X_t^{m;s,0}$ and $Y^{s,0}_t$ be the unique solution at time $t$ start from $s\leq t$ with initial data 0 of equation \eqref{SDE-0805} and \eqref{SDE-0805-1} respectively. Then Lemma \ref{lem:time-inhomogeneous} entails that there exist $C>0,\lambda>0$ such that for all $t\geq s$,
    \[
    d_2\big(\LL(X_t^{m;s,0}), \mu_m\big)\leq Ce^{-\lambda (t-s)}, \ \text{ and } \ d_2\big(\LL(Y^{s,0}_t), \nu_t^{1,\downarrow}\big)\leq Ce^{-\lambda (t-s)}.
    \]
    Similar to the proof of \eqref{eq:0806-4}, we conclude that
    \begin{equation}
       \lim_{s\to-\8}\WW_2\big(\LL(X_{t_0}^{m;s,0}), \mu_m\big)=0, \ \text{ and } \ \lim_{s\to-\8}\WW_2\big(\LL(Y^{s,0}_{t_0}), \nu^{1,\downarrow}_{t_0}\big)=0.
    \end{equation}
    On the other hand, Lemma \ref{thm:ldsde-comparison} yields
    \[
    \LL(X_{t_0}^{m;s,0})\leq_{\st}\LL(Y^{s,0}_{t_0}), \ \text{ for all } \ s\leq t_0.
    \]
    Then the closedness of the stochastic order gives $\mu_m\leq_{\st} \nu^{1,\downarrow}_{t_0}$. Note that 
    \[
    \WW_2(P_t^*\nu^{1,\downarrow}_{t_0}, \nu_1)=\WW_2(\nu^{1,\downarrow}_{t_0+t}, \nu_1)\to 0, \ \text{ as } \ t\to\8,
    \]
    then Proposition \ref{T:attractor} (v) shows that $\WW_2(P^*_t\mu_m, \nu_1)\to 0$ as $t\to\8$.

    \noindent\textbf{Case $1<k<n$:} Let $\{\nu^{k,\downarrow}_t\}_{t\in \R}$ and $\{\nu^{k-1,\uparrow}_t\}_{t\in \R}$ be the corresponding decreasing and increasing connecting orbits from $\nu_{2k}=\mu_{m_{2k}}$ to $\nu_{2k-1}$, and $\nu_{2k-2}=\mu_{m_{2k-2}}$ to $\nu_{2k-1}$ respectively in Theorem \ref{thm:local-convergence} (iii). Fix any $m\in (m_{2k-2},m_{2k})$, the connecting orbits indicate that there exists $t_0\in \R$ such that
    \begin{equation*}
        \int_{\R}x\d \nu^{k,\downarrow}_t(x)\geq m, \text{ and } \ \int_{\R}x\d \nu^{k-1,\uparrow}_t(x)\leq m \ \text{ for all } \ t\leq t_0.
    \end{equation*}
    Similar to that in Case $k=1$, we have $\nu^{k-1,\uparrow}_{t_0}\leq_{\st} \mu_m\leq_{\st} \nu^{k,\downarrow}_{t_0}$. Note that
    \[
    \WW_2(P^*_t\nu^{k-1,\uparrow}_{t_0},\nu_{2k-1})\to0, \ \text{ and } \ \WW_2(P^*_t\nu^{k,\downarrow}_{t_0},\nu_{2k-1})\to0, \ \text{ as } \ t\to \8,
    \]
    and the order-preserving property of $P^*$ entails that 
    \[
    P^*_t\nu^{k-1,\uparrow}_{t_0}=\nu^{k-1,\uparrow}_{t_0+t}\leq_{\st} P^*_t\mu_m\leq_{\st} \nu^{k,\downarrow}_{t_0+t}=P^*_t\nu^{k,\downarrow}_{t_0}, \ \text{ for all } \ t\geq 0.
    \]
    Thus, the relative compactness of $\{P_t^*\mu_m\}_{t\geq0}$, the closedness and antisymmetry of the stochastic order entail that $\WW_2(P^*_t\mu_m,\nu_{2k-1})\to0$ as $t\to\infty$.

    \noindent\textbf{Case $k=n$:} Let $\{\nu^{n-1,\uparrow}_t\}_{t\in \R}$ be the increasing connecting orbit from $\nu_{2n-2}=\mu_{m_{2n-2}}$ to $\nu_{2n-1}$ and fix any $m\in (m_{2n-2},\8)$. Similar to Case $k=1$, there exists $t_0\in \R$ such that $\mu_m\geq_{\st} \nu^{n-1,\uparrow}_{t_0}$. Then applying Proposition \ref{T:attractor} (v) again, we conclude $\WW_2(P^*_t\mu_m,\nu_{2n-1})\to0$ as $t\to\8$.
\end{proof}

Before proving Theorem \ref{T:general V}, we first give the following key proposition to verify the local dissipativity condition.

\begin{proposition}\label{prop:local-dissipation-at-a}
    Assume that $W(x)=\frac{\theta}{2}x^2$ and the confining potential function $V\in C^2(\R;\R)$ satisfy the following conditions:
    \begin{enumerate}
        \item [(i)] There exists $a\in \mathbb{R}$ such that $V'(a)=0$ and $V''(a)>0$;
        \item [(ii)] $\liminf_{|x|\to\infty}\frac{V'(x)}{x|x|^{\delta}}>0$ for some $\delta>0$.
    \end{enumerate}
    Then, there exists $r_a > 0$ such that for any $0 < r \leq r_a$, there are $\theta_r > 0$ and $\sigma_r > 0$ such that for all $\theta \geq \theta_r$ and $\sigma^2(x) \leq \sigma_r$ for all $x \in \R$, the equation \eqref{eq:mvsystem} is locally dissipative at $a$ with radius $r$.
\end{proposition}
\begin{proof}
     Without loss of generality, we assume $0<\delta<1$. Then we first show that there exist $C_1,C_2, C_3>0$ such that
    \begin{equation}\label{eq:lower-bdd-V'}
        \frac{V'(x+a)}{x}\geq C_1|x|^{\delta}-C_2|x|^{\frac{\delta}{2}}+C_3, \ \ \text{for any $x\neq0$}.
    \end{equation}
    By (ii), it holds that $\liminf_{|x|\to\infty}\frac{V'(x+a)}{x|x|^{\delta}}>0$. Thus, there exists $R>0$ and $C_1>0$ such that 
    \begin{equation*}
        \frac{V'(x+a)}{x}\geq C_1|x|^{\delta}, \ \text{ for all } \ |x|\geq R.
    \end{equation*}
    Let $C_3=\frac{V''(a)}{2}>0$.
    Notice that $\lim_{x\to 0}\frac{V'(x+a)}{x}=V''(a)>0$, there exist $0<\epsilon_a<R$ and  such that
    \begin{equation*}
        \frac{V'(x+a)}{x}\geq C_3, \ \text{ for all } \ |x|\leq \epsilon_a.
    \end{equation*}
    Take
    \begin{equation*}
        C_2=\epsilon_a^{-\frac{\delta}{2}}\bigg(\sup_{\epsilon_a\leq |x|\leq R}\bigg|\frac{V'(x+a)}{x}\bigg|+C_1R^{\delta}+C_3\bigg).
    \end{equation*}
    One can verify \eqref{eq:lower-bdd-V'} in three cases: $\abs{x}\geq R$, $\epsilon_a<|x|< R$ and $|x|\leq \epsilon_a$, separately. 
    Then by \eqref{eq:lower-bdd-V'} we have
    \begin{equation}\label{E:V inequal}
        -2xV'(x+a)=-2|x|^2\frac{V'(x+a)}{x}\leq -2C_1|x|^{2+\delta}+2C_2|x|^{2+\frac{\delta}{2}}-2C_3|x|^2, \ \text{ for all } \ x\in\R.
    \end{equation}
    Let $\overline{\xs}:=\sup_{x\in\R}|\xs(x)|$. Choose
    \begin{equation*}
        g_a(z,w)=2C_1z^{1+\frac{\delta}{2}}-2C_2z^{1+\frac{\delta}{4}}+(2C_3+2\theta)z-2\theta z^{\frac{1}{2}}w^{\frac{1}{2}}-\overline{\xs}^2.
    \end{equation*}
    Obviously, $g_a$ is continuous and $g_a(z,\cdot)$ is decreasing for any $z\geq 0$ and hence
    \begin{equation}\label{E:0 r2 inf}
       \inf_{0\leq w\leq r^2}g_a(z,w)=g_a(z,r^2), \ \text{ for any } \ r>0.
    \end{equation}  
    Also, \eqref{E:V inequal} implies that
    \begin{equation}\label{eq:1210-1}
        -2xV'(x+a)-2x(W'*\mu)(x)+|\xs(x+a)|^2\leq -g_a(|x|^2,\|\mu\|_2^2).
    \end{equation}
    For any $r>0$, a direct computation yields 
    \[
    g_a(r^2,r^2)=2r^2\big(C_1r^{\delta}-C_2r^{\frac{\delta}{2}}+C_3\big)-\overline{\xs}^2,
    \]
    \[
    \frac{\partial g_a(z,r^2)}{\partial z}\bigg|_{z=r^2}=2\big(1+\frac{\delta}{2}\big)C_1r^{\delta}-2\big(1+\frac{\delta}{4}\big)C_2r^{\frac{\delta}{2}}+2C_3+\theta,
    \]
    and for all $z\geq 0$,
    \begin{equation}\label{eq:second-derivation-g_a}
        \frac{\partial^2 g_a(z,r^2)}{\partial z^2}=\delta\big(1+\frac{\delta}{2}\big)C_1z^{-(1-\frac{\delta}{2})}-\frac{\delta}{2}\big(1+\frac{\delta}{4}\big)C_2z^{-(1-\frac{\delta}{4})}+\frac{\theta r}{2}z^{-\frac{3}{2}}.
    \end{equation}
    Then there exists $r_a>0$ such that for any $0<r\leq r_a$, we have
    \begin{equation}\label{E:r2 c1>0}
        2r^2\big(C_1r^{\delta}-C_2r^{\frac{\delta}{2}}+C_3\big)>0,
           \end{equation}
    and 
           \begin{equation}\label{E:ga partial >0}
           \frac{\partial g_a(z,r^2)}{\partial z}\bigg|_{z=r^2}=2\big(1+\frac{\delta}{2}\big)C_1r^{\delta}-2\big(1+\frac{\delta}{4}\big)C_2r^{\frac{\delta}{2}}+2C_3+\theta>0.
           \end{equation}
 
    Now, we fix $0<r\leq r_a$. 
    By \eqref{eq:second-derivation-g_a}, we have $\lim_{z\downarrow 0}\frac{\partial^2 g_a(z,r^2)}{\partial z^2}=\infty$. Hence, there exists $\epsilon_r>0$ such that
    \begin{equation*}
        \frac{\partial^2 g_a(z,r^2)}{\partial z^2}>0, \ \text{ for all } \ 0<z<\epsilon_r.
    \end{equation*}
    On the other hand,
    \begin{equation*}
        \frac{\partial^2 g_a(z,r^2)}{\partial z^2}=\frac{\delta}{8}z^{(\frac{\delta}{4}-1)}\big(4(2+\delta)C_1z^{\frac{\delta}{4}}-(4+\delta)C_2\big)+\frac{\theta r}{2}z^{-\frac{3}{2}}>0, \ \text{ for all } \ z\geq C_{\delta}:=\Big(\frac{(4+\delta)C_2}{4(2+\delta)C_1}\Big)^{\frac{4}{\delta}}.
    \end{equation*}
   Let
    \begin{equation}\label{eq:theta-r}
        \theta_r=\frac{\delta}{r}\big(1+\frac{\delta}{4}\big)C_2C_{\delta}^{\frac{3}{2}}\epsilon_r^{-(1-\frac{\delta}{4})}.
    \end{equation} 
    For  $\theta\geq \theta_r$, by \eqref{eq:second-derivation-g_a}, one has
    \begin{equation*}
        \frac{\partial^2 g_a(z,r^2)}{\partial z^2}\geq \delta\big(1+\frac{\delta}{2}\big)C_1z^{-(1-\frac{\delta}{2})}>0, \ \text{ for all } \ \epsilon_r\leq z\leq C_{\delta}.
    \end{equation*}
    Thus, for all $\theta\geq \theta_r$, we conclude that
    \begin{equation}\label{eq:1210-2}
        \frac{\partial^2 g_a(z,r^2)}{\partial z^2}>0, \ \text{ for all } \ z>0.
    \end{equation}
    So $g_a(\cdot,r^2)$ is convex. Thus, by \eqref{E:ga partial >0},
    \begin{equation}\label{E:ga z r2>0}
        \frac{\partial g_a(z,r^2)}{\partial z}\geq \frac{\partial g_a(r^2,r^2)}{\partial z}>0, \ \text{ for all } \ z\geq r^2.
    \end{equation}
    \eqref{E:r2 c1>0} implies that there exists $ \sigma_r$ such that
    \begin{equation}\label{eq:sigma-r}
       0< \sigma_r<2r^2\big(C_1r^{\delta}-C_2r^{\frac{\delta}{2}}+C_3\big).
    \end{equation}
    Then by \eqref{E:ga z r2>0}, we have
    \begin{equation}\label{eq:1210-3}
        g_a(z,r^2)\geq g_a(r^2,r^2)>\xs_r-\overline{\xs}^2\geq 0, \ \text{ for all } \ z\geq r^2.
    \end{equation}
    Therefore, for any $0<r\leq r_a$, let $\theta_r, \sigma_r$ be given as in \eqref{eq:theta-r} and \eqref{eq:sigma-r}.
    It follows from \eqref{E:0 r2 inf}, \eqref{eq:1210-1}, \eqref{eq:1210-2},  \eqref{eq:1210-3}  that for all $\theta\geq \theta_r, \ \overline{\xs}^2\leq \sigma_r$, equation \eqref{eq:mvsystem} is locally dissipative at $a$ with radius $r$.
\end{proof}

\begin{remark}\label{rem:hyper-dissipation-local}
    If $V'$ is hyper-dissipative (see Assumption \ref{asp:potential} (iii)), i.e., there exist $\xa>0$, $\xb>0$, $\xd>0$ such that, for all $x,y\in\R$,
\[
-(x-y)(V'(x)-V'(y))\leq-\xa\abs{x-y}^{2+\xd}+\xb.
\]
Then we have
\[
-xV'(x)\leq -\xa\abs{x}^{2+\xd}+ |x||V'(0)|+ \xb,
\]
and hence
\[
\frac{V'(x)}{x|x|^{\delta}}=\frac{xV'(x)}{|x|^{2+\delta}}\geq \xa-\frac{|V'(0)|}{|x|^{1+\delta}}-\frac{\beta}{|x|^{2+\delta}}.
\]
Thus, $\lim_{|x|\to\infty}\frac{V'(x)}{x|x|^{\delta}}=\alpha>0$ and the condition (ii) in Proposition \ref{prop:local-dissipation-at-a} holds true.
\end{remark}

\begin{proof}[Proof of Theorem \ref{T:general V}]
    According to Remark \ref{rem:hyper-dissipation-local}, we have
    \begin{equation}\label{eq:new1210}
        \lim_{x\to\infty}V'(x)=\infty, \ \ \lim_{x\to-\infty}V'(x)=-\infty.
    \end{equation}
    Since $V'$ has only finitely many zeros and each of them is simple, the number of zeros  must be odd.
    Denoting them by $a_1<b_1<a_2<b_2<\cdots<a_{n-1}<b_{n-1}<a_n$, it then follows that 
        \begin{equation}\label{eq:new1210-2}
        V''(a_i)>0, \ 1\leq i\leq n, \ \text{ and } \ V''(b_j)<0, \ 1\leq j\leq n-1.
    \end{equation}
    By virtue of Proposition \ref{prop:local-dissipation-at-a}, Remark \ref{rem:hyper-dissipation-local} and \eqref{eq:new1210-2} that, there exist 
    \[
    0<r<\frac{1}{2}\min_{1\leq i\leq n-1}\abs{a_{i+1}-a_i}, \ \ \theta_0>0, \ \text{ and } \ \xs_0>0
    \]
    such that, for all $\theta\geq \theta_0$ and $\overline{\xs}\leq \xs_0$, equation \eqref{eq:mvsystem} is locally dissipative at $a_{k}$ with radius $r$ for all $1\leq k\leq n$. 
    Then the desired result follows from Theorem \ref{thm:local-convergence}.
\end{proof}

\begin{proof}[Proof of Theorem \ref{thm:double-well}]
    According to Theorem \ref{thm:local-convergence}, it suffices to show that equation \eqref{eq:double-well} are locally dissipative at $-1,1$ with configurations $(r,g_{-1}), (r,g_1)$ for $r=\frac{9-\sqrt{17}}{8}$ and for some $g_{\pm 1}$.

    Note that 
    \[
    V'(x)=x^3-x, \ W(x)=\frac{\theta}{2}x^2.
    \]
    Then we construct $g_{\pm 1}$ as follows.

    \noindent\textbf{Construction of $g_{1}$:} For any $x\in \R, \mu\in \PP_2(\R)$, by a simple calculation, we have
    \begin{equation*}
        -2xV'(x+1)-2x(W'*\mu)(x)+|\sigma(x+1)|^2\leq -\big(2|x|^4-6|x|^3+(4+2\theta)|x|^2-2\theta |x|\|\mu\|_2-\overline{\sigma}^2\big),
    \end{equation*}
    where 
    \begin{equation*}
        \overline{\sigma}:=\sup_{x\in \R}|\sigma(x)|.
    \end{equation*}
    Then we choose $g_{1}$ as follows: for any $z,w\geq 0$,
    \begin{equation}\label{1127-7}
       g_{1}(z,w)=2z^2-6z^{\frac{3}{2}}+(4+2\theta)z-2\theta z^{\frac{1}{2}}w^{\frac{1}{2}}-|\overline{\xs}|^2.
    \end{equation}
    Obviously, $g_1$ is continuous and $g_1(z,\cdot)$ is decreasing for any $z\geq 0$ and hence
    \begin{equation*}
       \inf_{0\leq w\leq r^2}g_1(z,w)=g_1(z,r^2), \ \text{ for any } \ r>0.
    \end{equation*}
    Note that
    \begin{equation*}
       \frac{\partial g_1(z,w)}{\partial z}=4z-9z^{\frac{1}{2}}+4+2\theta-\theta w^{\frac{1}{2}}z^{-\frac{1}{2}},
    \end{equation*}
    and
    \begin{equation*}
       \frac{\partial^2 g_1(z,w)}{\partial z^2}=4-\frac{9}{2}z^{-\frac{1}{2}}+\frac{1}{2}\theta w^{\frac{1}{2}}z^{-\frac{3}{2}}.
    \end{equation*}
    Then by calculation, we have for any $w>0$, $g_1(\cdot,w)$ is convex if and only if 
    \begin{equation}\label{1127-6}
       \theta\geq \frac{27}{16w^{\frac{1}{2}}}.
    \end{equation}
    Note that for any $0<r<1$, 
    \begin{equation*}
       g_{1}(r^2,r^2)=2r^2(r-1)(r-2)-\overline{\xs}^2,
    \end{equation*}
    and
    \begin{equation*}
       \frac{\partial g_1}{\partial z}(r^2,r^2)=4r^2-9r+4+\theta>\theta-1.
    \end{equation*}
    Now for any fixed $0<r<1$, if we choose
    \begin{equation*}
       \theta\geq\frac{27}{16r}>1,\ \text{ and } \ \overline{\xs}^2<2r^2(r-1)(r-2),
    \end{equation*}
    then
    \begin{equation*}
       g_1(r^2,r^2)>0,\ \text{ and } \ \frac{\partial g_1}{\partial z}(r^2,r^2)>0.
    \end{equation*}
    By \eqref{1127-6}, we have $g_1(\cdot, r^2)$ is convex. Hence, 
    \begin{equation*}
       \frac{\partial g_1}{\partial z}(z,r^2)\geq \frac{\partial g_1}{\partial z}(r^2,r^2)>0 \ \text{ for all } \ z\geq r^2,
    \end{equation*}
    and thus
    \begin{equation*}
       g_1(z,r^2)\geq g_1(r^2,r^2)>0 \ \text{ for all } \ z\geq r^2.
    \end{equation*}
    Hence, for any $0<r<1$ and choosing $g_1$ as in \eqref{1127-7}, equation \eqref{eq:double-well} is locally dissipative at $1$ with configuration $(r,g_1)$ whenever
    \begin{equation}\label{eq:1007-1}
       \theta\geq\frac{27}{16r},\ \text{ and } \ \overline{\xs}^2<2r^2(r-1)(r-2).
    \end{equation}
    Note that 
    \begin{equation}\label{eq:1007-2}
        \max_{0<r<1}2r^2(r-1)(r-2)=2r^2(r-1)(r-2)\Big|_{r=\frac{9-\sqrt{17}}{8}}=\frac{51\sqrt{17}-107}{256}.
    \end{equation}
    Combining \eqref{eq:1007-1}\eqref{eq:1007-2}, under the condition \eqref{eq:con-double-well} in Theorem \ref{thm:double-well},
    equation \eqref{eq:double-well} is locally dissipative at $1$ with configuration $(r,g_1)$ where $r=\frac{9-\sqrt{17}}{8}$ and $g_1$ is given as in \eqref{1127-7}.

    \noindent\textbf{Construction of $g_{-1}$:} Similarly, let $g_{-1}=g_1$ be given as in \eqref{1127-7}, we can show that equation \eqref{eq:double-well} is also locally dissipative at $-1$ with configuration $\big(\frac{9-\sqrt{17}}{8},g_{-1}\big)$.
\end{proof}

\begin{proof}[Proof of Theorem \ref{thm:multi-well}]
Take
\begin{align*}
g_0(z,w)&=2z^3-10z^2+(8+2\theta)z-2\theta z^{\frac12}w^{\frac12}-\overline{\xs}^2,& r_0&=\frac{\sqrt{15-3\sqrt{13}}}{3},\\
g_2(z,w)&=2z^3-20z^{\frac52}+70z^2-100z^{\frac32}+(48+2\theta)z-2\theta z^{\frac12}w^{\frac12}-\overline{\xs}^2,& r_2&=r_0,\\
g_{-2}(z,w)&=g_2(z,w),& r_{-2}&=r_0,
\end{align*}
where $\overline{\sigma}:=\sup_{x\in \R}|\sigma(x)|$. Similar to the calculation in the proof of Theorem \ref{thm:double-well}, we can prove that the equation \eqref{eq:multi-well} is locally dissipative at $0, \pm2$.
The proof is finished by applying Theorem \ref{thm:local-convergence}.
\end{proof}

\begin{proof}[Proof of Theorem \ref{thm:double-well-vanish}]
    By a simple calculation, we have
   \begin{equation*}
       -2xV'(x+1)\leq -2x^2(x+1)(x+2)+\frac{1}{4}.
   \end{equation*}
   Then we have
   \begin{equation*}
        -2xV'(x+1)-2x(W'*\mu)(x)+|\sigma(x+1)|^2\leq -\big(2|x|^4-6|x|^3+(4+2\theta)|x|^2-2\theta |x|\|\mu\|_2-\overline{\sigma}^2-\frac{1}{4}\big),
    \end{equation*}
    where $\overline{\sigma}:=\sup_{x\in \R}|\sigma(x)|$.
    Choose $g_1$ as follows: for any $z,w\geq 0$,
    \begin{equation}\label{eq:g_1-dw-vanish}
       g_{1}(z,w)=2z^2-6z^{\frac{3}{2}}+(4+2\theta)z-2\theta z^{\frac{1}{2}}w^{\frac{1}{2}}-|\overline{\xs}|^2-\frac{1}{4}.
    \end{equation}
    Repeating the proof of that in Theorem \ref{thm:double-well}, we have the equation \eqref{eq:double-well-vanish} is locally dissipative at $1$ with configuration $(r, g_1)$ with $r=\frac{9-\sqrt{17}}{8}$ whenever
    \[
    \theta\geq \frac{27(9+\sqrt{17})}{128}, \ \text{ and } \ \overline{\xs}^2+\frac{1}{4}<\frac{51\sqrt{17}-107}{256},
    \]
    i.e., \eqref{eq:para-double-well-vanish} holds. Similarly, choose $g_{-1}=g_1$ as in \eqref{eq:g_1-dw-vanish}, the equation \eqref{eq:double-well-vanish} is locally dissipative at $-1$ with configuration $\big(\frac{9-\sqrt{17}}{8},g_{-1}\big)$. Then the desired results follow from Theorem \ref{thm:local-convergence} directly.
\end{proof}

Motivated by our results, we conclude the paper with the following conjecture.

In this work, by exploiting the order-preserving property of the associated semiflow, we have obtained a partial characterization of the basins of attraction of invariant measures, both in the sense of the stochastic order and the topology. 
In particular, our results reveal the existence of unbounded open subsets contained in the basins of attraction of minimal/maximum invariant measures.

In the theory of monotone dynamical systems, a classical \emph{saddle-point structure}
(or \emph{saddle-type behavior}) arises when a system possesses three equilibria
$a<b<c$, where $a$ and $b$ are stable.
More precisely, the state space can be decomposed into three disjoint invariant sets:
the basin of attraction $B_1$ of $a$, the basin of attraction $B_2$ of $b$,
and an unordered invariant set $M$ containing $c$.
The set $M$ separates the two basins $B_1$ and $B_2$ and is usually referred to as the
\emph{separatrix} (see e.g., Jiang-Liang-Zhao \cite{Jiang-Liang-Zhao2004}).

\begin{center}
\textbf{Conjecture.}
\end{center}
In the double-well granular media equation, the three invariant measures admit an
analogous saddle-point structure.
More precisely, the unstable invariant measure $\mu_0$ is expected to be contained
in an unordered invariant separating set that acts as a separatrix between the basins of
attraction of the two stable invariant measures.

 \section*{Acknowledgement}
 We acknowledge the financial supports of EPSRC grant ref (ref. EP/S005293/2), Royal Society through the Award of
 Newton International Fellowship (ref. NIF/R1/221003) and National Natural Science Foundation of China (No. 12501184, No. 12171280).

\begin{appendices}

\section{Nonexistence of Strong Ordered Pairs for Stochastic Order}\label{sec:no strong order}

In addition to closedness of the stochastic order, almost all classical results in the theory of monotone dynamical systems are established on the fundamental assumption that the state space $X$ is not only endowed with a partial order relation ``$\leq$'', but is also a ``strongly ordered space'' (see e.g., Hirsch \cite{Hirsch84,Hirsch88}).
A strongly ordered space implies that the interior of the partial order relation ``$\leq$'' (referred to as the strong ordering ``$\ll$'') is a nonempty relation on $X$, and its closure is ``$\leq$''.
The strong ordering ``$\ll$'' is a relation for which two points ``$x\ll y$'' means that there are open neighbourhoods $U,V$ in $X$ of $x,y$ respectively such that $U\leq V$, where the term ``$U\leq V$'' refers to $u\leq v$ for any $u\in U$ and $v\in V$. 
If $x\ll y$, we call $(x,y)$ is a \emph{strong ordered pair}.
Based on the tool of order topology, especially the topology generated by the interiors of order intervals enclosed by strong ordered pairs, the theory of monotone dynamical systems and its applications to cooperative irreducible ODEs, delay equations and parabolic equations  have undergone extensively investigation (see e.g., \cite{Smith95,HS05,Z17}).

Unfortunately, in the appendix, we will show that the strong ordering relation generated by the stochastic order ``$\leq_{\st}$'' in $\PP_2(\R)$ is empty, meaning that NO strong ordered pairs exist. We actually prove a stronger conclusion -- for any $\mu\in\PP_2(\R)$, every open set $B\subset\PP_2(\R)$ has an element that is order unrelated with $\mu$ (see Proposition \ref{prop:no-strong-order}). 
This implies that classical theory of monotone dynamical systems are inapplicable to order-preserving semigroups on $\PP_2(\R)$. Consequently, exploration of properties of the stochastic order and the results of monotone dynamical systems, as discussed in Section \ref{subsec:order-bounded-equivalent}, \ref{subsec:global attractor} and \ref{subsec:local attractor}, are indispensable.

\begin{proposition}\label{prop:no-strong-order}
    Let $\mu\in \PP_2(\R)$ and $U$ be an open set in $\PP_2(\R)$.
    Then there exists $\nu\in U$ such that $\mu, \nu$ are order unrelated.
    In particular, any order interval in $\PP_2(\R)$ has no interior point.
\end{proposition}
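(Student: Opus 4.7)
I would argue by constructing a small perturbation of an arbitrary $\nu_0\in U$ into a measure order-unrelated with $\mu$. Lemma~\ref{lem:stochastic-order-distribution} translates the stochastic order into a pointwise comparison of cumulative distribution functions, so making $\nu$ and $\mu$ unrelated amounts to producing a crossing between $F_\nu$ and $F_\mu$. If $\nu_0$ is already unrelated with $\mu$, take $\nu=\nu_0$. If $\nu_0=\mu$, choose $N$ so large that $F_\mu(-N+1)<\tfrac12<F_\mu(N-1)$, let $\eta=\tfrac12\xd_{-N}+\tfrac12\xd_N$ (whose CDF equals $\tfrac12$ on $[-N,N)$, making $\eta$ visibly unrelated with $\mu$), and set $\nu_\epsilon=(1-\epsilon)\mu+\epsilon\eta$; then $F_{\nu_\epsilon}-F_\mu=\epsilon(F_\eta-F_\mu)$ takes both signs for every $\epsilon>0$, while $\WW_2(\nu_\epsilon,\mu)\leq\sqrt{\epsilon}\,\WW_2(\eta,\mu)\to0$.

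The main case is $\nu_0<_{\st}\mu$ (the case $\nu_0>_{\st}\mu$ is symmetric, using $\xd_{-M}$ in place of $\xd_M$). Let $h:=F_{\nu_0}-F_\mu\geq 0$ with $h^*:=\sup_x h(x)>0$, fix $x_1$ with $h(x_1)>h^*/2$, and for $M>x_1+1$ to be chosen, set $\epsilon:=2h(M-1)$ and $\nu_{\epsilon,M}:=(1-\epsilon)\nu_0+\epsilon\xd_M$. A direct calculation gives $F_{\nu_{\epsilon,M}}(M-1)-F_\mu(M-1)=h(M-1)\bigl(1-2F_{\nu_0}(M-1)\bigr)$, which is negative once $F_{\nu_0}(M-1)>1/2$, and $F_{\nu_{\epsilon,M}}(x_1)-F_\mu(x_1)\geq h(x_1)-\epsilon>0$ once $\epsilon<h^*/2$; both conditions hold for $M$ large enough, so $\nu_{\epsilon,M}$ is order-unrelated with $\mu$.

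The core technical obstacle is to squeeze $\nu_{\epsilon,M}$ into a prescribed open ball $B(\nu_0,r)\subset U$. The natural coupling bound $\WW_2^2(\nu_{\epsilon,M},\nu_0)\leq\epsilon\int_\R(M-x)^2\,\d\nu_0(x)$ grows like $\epsilon M^2$, while the choice $\epsilon=2h(M-1)$ is dictated from below by the CDF-crossing requirement at $M-1$. The saving point is that $\mu\in\PP_2(\R)$ yields the sharp tail estimate $(M-1)^2\bigl(1-F_\mu(M-1)\bigr)\leq\int_{M-1}^\8 x^2\,\d\mu(x)\to0$, hence $M^2h(M-1)\leq M^2\bigl(1-F_\mu(M-1)\bigr)\to0$, so $\epsilon M^2\to0$ and $\nu_{\epsilon,M}\in B(\nu_0,r)$ for $M$ sufficiently large. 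The ``in particular'' clause then follows immediately: any interior point of an order interval $[\mu_1,\mu_2]$ would have an open neighbourhood inside $\{\rho\in\PP_2(\R):\rho\leq_{\st}\mu_2\}$, contradicting the main claim applied to $\mu=\mu_2$.
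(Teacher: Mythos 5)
Your construction takes a genuinely different route from the paper's: you mix $\nu_0$ with a distant Dirac mass $\delta_M$ to force a CDF crossing, whereas the paper first perturbs $\nu_0$ to a measure $\nu_1$ with both tails strictly positive (their ``Claim'') and then swaps the tail of $\nu_1$ beyond $\pm r_n$ for twice the corresponding tail of $\mu$. Your route is shorter and dispenses with the preprocessing. However, there is a gap in the main case $\nu_0 <_{\st} \mu$. You set $\epsilon := 2h(M-1)$ with $h := F_{\nu_0} - F_\mu \geq 0$ and assert that $F_{\nu_{\epsilon,M}}(M-1)-F_\mu(M-1) = h(M-1)\bigl(1-2F_{\nu_0}(M-1)\bigr)$ is negative once $F_{\nu_0}(M-1)>1/2$. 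But if $h(M-1)=0$ then $\epsilon=0$, so $\nu_{\epsilon,M}=\nu_0$ and the displayed quantity is zero, not negative. This is not vacuous: $h$ can vanish on a whole ray $[C,\infty)$. For instance $\mu=\delta_0$, $\nu_0=\frac12\delta_{-1}+\frac12\delta_0$ gives $\nu_0<_{\st}\mu$ with $h\equiv\frac12$ on $[-1,0)$ and $h\equiv0$ on $[0,\infty)$, so $\epsilon=2h(M-1)=0$ for every $M\geq1$ and the construction returns $\nu_0$ itself. Thus ``both conditions hold for $M$ large enough'' is false as written: there is a hidden third requirement $h(M-1)>0$ that can fail for all large $M$.

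The fix is minor: replace $\epsilon:=2h(M-1)$ by $\epsilon:=2h(M-1)+M^{-3}$ (or any strictly positive surrogate decaying faster than $M^{-2}$). Since $F_{\nu_{\epsilon,M}}(x)-F_\mu(x)=h(x)-\epsilon F_{\nu_0}(x)$ for $x<M$, one gets $F_{\nu_{\epsilon,M}}(M-1)-F_\mu(M-1)\leq -\tfrac12 M^{-3}<0$ once $F_{\nu_0}(M-1)\geq\tfrac12$, regardless of whether $h(M-1)$ vanishes, while $F_{\nu_{\epsilon,M}}(x_1)-F_\mu(x_1)\geq h(x_1)-\epsilon>0$ still holds for $M$ large. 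Your tail estimate $M^2h(M-1)\to0$, together with the extra $M^{-1}$, keeps $\epsilon M^2\to0$, so the $\WW_2$ bound survives. With this adjustment the argument is correct; the $\nu_0=\mu$ subcase and the ``in particular'' clause are fine as written.
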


\begin{proof}
    Since $U$ is open, there exist $\nu_0\in U$ and $\delta>0$ such that 
    \begin{equation*}
        B(\nu_0,2\xd):=\{\nu\in \PP_2(\R): \WW_2(\nu,\nu_0)<2\delta\}\subset U.
    \end{equation*}

    \noindent\textbf{Claim.} There exist $c>0$ and $\nu_1\in B(\nu_0,\xd)$ such that
    \begin{equation}\label{1117-2}
        \nu_1((-\8,-r])>0, \ \ \nu_1([r,\8))>0, \ \text{ for all } \ r\geq c.
    \end{equation}
    
    \noindent\textbf{Proof of Claim:} Note that there exists $c>0$ such that $\nu_0((-c,c))>0$. Let
    \begin{equation*}
        \rho(x)=\frac{e^{-|x|}}{\int_{\R}|x|^2e^{-|x|}\d x}, \ \ \rho_{\epsilon}(x):=\epsilon^{-1}\rho(\epsilon^{-1}x), \ \epsilon\in (0,1).
    \end{equation*}
    For any $\epsilon\in(0,1)$, we have
    \begin{equation*}
        \begin{split}
            \alpha_{\epsilon}&:=\int_{(-\8,-c]\cup[c,\8)}\rho_{\epsilon}(x)\d x \\
            &\leq \frac{1}{c^2}\int_{(-\8,-c]\cup[c,\8)}|x|^2\rho_{\epsilon}(x)\d x\\
            &\leq \frac{1}{c^2}\int_{\R}|x|^2\rho_{\epsilon}(x)\d x\\
            &=\frac{\epsilon^2}{c^2}\int_{\R}|x|^2\rho(x)\d x\\
            &=\frac{\epsilon^2}{c^2}.
        \end{split}
    \end{equation*}
    Choose $\epsilon_0\in(0,1)$ such that $\epsilon_0^2<c^2\nu_0((-c,c))$ and we define, for any $0<\epsilon<\epsilon_0$,
    \begin{equation*}
        \d\nu_{1,\epsilon}(x)=
        \begin{cases}
            \frac{\nu_0((-c,c))-\alpha_{\epsilon}}{\nu_0((-c,c))}\d\nu_0(x), & \text{on } (-c,c),\\
            \d\nu_0(x)+\rho_{\epsilon}(x)\d x, & \text{on } (-\8,-c]\cup [c,\8).
        \end{cases}
    \end{equation*}
    Since $\nu_0\in \PP_2(\R)$, it is easy to check that $\nu_{1,\epsilon}\in \PP_2(\R)$ satisfying \eqref{1117-2} for all $0<\epsilon<\epsilon_0$. Note that for any $f\in C_b(\R)$, where $C_b(\R)$ is the set of all bounded continuous functions from $\R$ to $\R$,
    \begin{equation}\label{1117-1}
        \begin{split}
            &\ \ \ \ \bigg|\int_{\R}f(x)\d\nu_{1,\epsilon}(x)-\int_{\R}f(x)\d\nu_0(x)\bigg|\\
            &\leq \frac{\alpha_{\epsilon}}{\nu_0((-c,c))}\int_{(-c,c)}|f(x)|\d\nu_0(x)+\int_{(-\8,-c]\cup[c,\8)}|f(x)|\rho_{\epsilon}(x)\d x\\
            &\leq 2|f|_{\infty}\alpha_{\epsilon}\\
             &\leq \frac{2|f|_{\infty}}{c^2}\epsilon^2\to 0 \ \text{ as } \ \epsilon\to 0,
        \end{split}
    \end{equation}
    where $|f|_{\infty}=\sup_{x\in \R}|f(x)|$.
    Hence $\nu_{1,\epsilon}$ weakly converges to $\nu_0$ as $\epsilon\to 0$, which is denoted by $\nu_{1,\epsilon}\xrightarrow{w} \nu_0$ as $\epsilon\to 0$. Moreover, similar to \eqref{1117-1}, we have
    \begin{equation}\label{eq:0228-1}
            \begin{split}
                &\ \ \ \ \lim_{\epsilon\to 0}\bigg|\int_{\R}|x|^2\d\nu_{1,\epsilon}(x)-\int_{\R}|x|^2\d\nu_0(x)\bigg|\\
                &\leq\lim_{\epsilon\to 0}\Big(c^2\alpha_{\epsilon}+\int_{(-\8,-c]\cup[c,\8)}|x|^2\rho_{\epsilon}(x)\d x\Big) \\
                &\leq \lim_{\epsilon\to 0} 2\epsilon^2=0.
            \end{split}
    \end{equation}
    By Theorem 6.9 in \cite{Villani2009}, we have $\lim_{\epsilon\to 0}\WW_2(\nu_{1,\epsilon},\nu_0)=0$ due to $\nu_{1,\xe}\goto{w}\nu_0$ and \eqref{eq:0228-1}.
    Then there exists $0<\epsilon_1<\epsilon_0$ such that $\nu_1:=\nu_{1,\epsilon_1}\in B(\nu_0,\xd)$ satisfying \eqref{1117-2}.
    Thus, we have proved the Claim.

    If $\mu$ and $\nu_1$ are order unrelated, the proof is completed by choosing $\nu=\nu_1$. Otherwise, without loss of generality, we assume $\mu\leq_{\st}\nu_1$. Then we have
    \begin{equation}\label{eq:0816-0}
        \mu((-\8,-r])\geq \nu_1((-\8,-r]), \ \ \mu([r,\8))\leq \nu_1([r,\8)), \ \text{ for all } \ r\geq c.
    \end{equation}
    We will finish the proof by considering the following two cases.

    \noindent\textbf{Case 1.} $\mu([r_0,\8))<\nu_1([r_0,\8))$ for some $r_0\geq c$. 
    By \eqref{1117-2} and \eqref{eq:0816-0}, one has
    \begin{equation}\label{eq:0816-1}
        \mu((-\8,-r])\geq\nu_1((-\8,-r])>0, \ \text{ for all } \ r\geq c,
    \end{equation}
    Note that $\lim_{r\to \infty}\mu((-\8,-r])=0$,
    then for any positive integer $n\geq c$, there exists $r_n>n+1$ such that
    \begin{equation*}
        0<2\mu((-\8,-r_n])<\nu_1((-\8, -n]).
    \end{equation*}
    For any $n\geq c$, define
    \begin{equation}\label{eq:0816-2}
        \d\nu_{2,n}(x)=
        \begin{cases}
            \d\nu_1(x), & \text{on } (-n,\8),\\
            \big(\nu_1((-\8, -n])-2\mu((-\8,-r_n])\big)\d x, & \text{on } (-(n+1),-n],\\
            0,   & \text{on } (-r_n,-(n+1)],\\
            2\d\mu(x), & \text{on } (-\8,-r_n].
        \end{cases}
    \end{equation}
    It is easy to check that $\nu_{2,n}\in \PP_2(\R)$ for all $n\geq c$. Moreover, $\nu_{2,n}\xrightarrow{w} \nu_1$ as $n\to \infty$ since for any $f\in C_b(\R)$,
    \begin{equation*}
        \begin{split}
            &\ \ \ \ \bigg|\int_{\R}f(x)\d\nu_{2,n}(x)-\int_{\R}f(x)\d\nu_1(x)\bigg|\\
            &\leq \int_{(-\8, -n]}|f(x)|\d\nu_{2,n}(x)+\int_{(-\8, -n]}|f(x)|\d\nu_1(x)\\
            &\leq 2|f|_{\infty}\nu_1((-\8, -n])\to 0 \ \text{ as } \ n\to \infty.
        \end{split}
    \end{equation*}
    Note also that $\mu,\nu_1\in \PP_2(\R)$, then for any $n\geq c$,
    \begin{equation*}
        \begin{split}
            &\ \ \ \ \bigg|\int_{\R}|x|^2\d\nu_{2,n}(x)-\int_{\R}|x|^2\d\nu_1(x)\bigg|\\
            &\leq \int_{(-\8, -n]}|x|^2\d\nu_{2,n}(x)+\int_{(-\8, -n]}|x|^2\d\nu_1(x)\\
            &\leq (n+1)^2\nu_1((-\8, -n])+2\int_{(-\8,-r_n]}|x|^2\d\mu(x)+\int_{(-\8, -n]}|x|^2\d\nu_1(x)\\
            &\leq 2\int_{(-\8,-r_n]}|x|^2\d\mu(x)+5\int_{(-\8, -n]}|x|^2\d\nu_1(x)\to 0 \ \text{ as } \ n\to \infty.
        \end{split}
    \end{equation*}
    Hence, it follows from Theorem 6.9 in \cite{Villani2009} again that $\lim_{n\to \infty}\WW_2(\nu_{2,n},\nu_1)=0.$
    Thus, there exists $n_0\geq c$ such that $\nu:=\nu_{2,n_0}\in B(\nu_1,\xd)\subset B(\nu_0,2\xd)$. By \eqref{eq:0816-1}, the construction of $\nu_{2,n_0}$ in \eqref{eq:0816-2} and the choice of $r_0$ in Case 1, we have
    \begin{equation*}
        \begin{split}
            0<\mu&((-\8,-r])<2\mu((-\8,-r])=\nu((-\8,-r]), \text{ for all } \ r\geq r_{n_0},\\
            &\mu([r_0,\8))<\nu_1([r_0,\8))=\nu([r_0,\8)).
        \end{split}
    \end{equation*}
    This, together with Lemma \ref{lem:stochastic-order-distribution}, indicates that $\mu,\nu$ are order unrelated.

    \noindent\textbf{Case 2.} $\mu([r,\8))=\nu_1([r,\8))$ for all $r\geq c$.
    By \eqref{1117-2}, one has 
    \begin{equation}\label{y-081601}
    \mu([r,\8))=\nu_1([r,\8))>0 \ \text{ for all } \ r\geq c.
    \end{equation}
    Note that 
    \begin{equation*}
            \lim_{r\to \infty}\mu((-\8,-r])=\lim_{r\to \infty}\mu([r,\8))=0,
    \end{equation*}
    then it follows from \eqref{eq:0816-1} and \eqref{y-081601} that, for any positive integer $n\geq c$, there exists $r_n>n+1$ such that
    \begin{equation*}
        0<2\mu((-\8,-r_n])<\nu_1((-\8, -n]), \ \text{ and } \ 0<2\mu([r_n,\8))<\nu_1([n,\8)).
    \end{equation*}
    For any $n\geq c$, define
    \begin{equation*}
        \d\nu_{3,n}(x)=
        \begin{cases}
            \d\nu_1(x), & \text{on } (-n,n),\\
            \big(\nu_1([n,\8))-2\mu([r_n,\8))\big)\d x, & \text{on } [n,n+1),\\
            \big(\nu_1((-\8, -n])-2\mu((-\8,-r_n])\big)\d x, & \text{on } (-(n+1),-n],\\
            0, & \text{on } (-r_n,-(n+1)]\cup [n+1,r_n),\\
            2\d\mu(x), & \text{on } (-\8,-r_n]\cup [r_n,\8).        
        \end{cases}
    \end{equation*}
    Similarly, we can conclude that $\nu_{3,n}\in\PP_2(\R)$ and $\lim_{n\to \infty}\WW_2(\nu_{3,n},\nu_1)=0.$ Choose $n_0\geq c$ large enough such that $\nu:=\nu_{3,n_0}\in B(\nu_1,\xd)\subset B(\nu_0,2\xd)$, then we have for all $r\geq r_{n_0}$,
     \begin{equation*}
        \begin{split}
            0<&\mu((-\8,-r])<2\mu((-\8,-r])=\nu((-\8,-r]), \ \text{ and } \\
            0<&\mu([r,\8))<2\mu([r,\8))=\nu([r,\8)).
        \end{split}
    \end{equation*}
    Hence, Lemma \ref{lem:stochastic-order-distribution} gives $\mu,\nu$ are order unrelated.
\end{proof}

\end{appendices}

\clearpage
\phantomsection
\addcontentsline{toc}{section}{References}
\linespread{1.0}
\selectfont
\bibliographystyle{siam}
\bibliography{reference}

\end{document}